\documentclass[reqno]{amsart}
\usepackage{amsmath}
\usepackage{amssymb}
\usepackage{amsfonts}
\usepackage{amsaddr}
\usepackage{graphicx}
\usepackage{booktabs}
\usepackage{graphicx}
\usepackage{subcaption}
\usepackage{arydshln}
\usepackage{color}
\usepackage{tikz}
\usepackage[noadjust]{cite}

\newtheorem{theorem}{Theorem}
\theoremstyle{plain}

\newtheorem{claim}{Claim}

\newtheorem{corollary}{Corollary}

\newtheorem{definition}{Definition}
\newtheorem{example}{Example}

\newtheorem{lemma}{Lemma}
\newtheorem{notation}{Notation}

\newtheorem{proposition}{Proposition}
\newtheorem{remark}{Remark}

\numberwithin{equation}{section}
\makeatletter
\renewcommand{\subsection}{\@startsection{subsection}{2}  \z@{.5\linespacing\@plus.7\linespacing}{.5\linespacing}  {\normalfont\bfseries}}
\makeatother

\begin{document}
\title[Gromov hyperbolicity of substitution graphs]{Gromov hyperbolicity of
substitution graphs}
\author[Qingcheng Zeng, Cheng Zeng, Yumei Xue, Huixia He]{%
    Qingcheng Zeng$^{\,\text{a}}$, 
    Cheng Zeng$^{\,\text{b}}$,     
    Yumei Xue$^{\,\text{a},*}$,    
    Huixia He$^{\,\text{a}}$       
}
\email{qczeng@buaa.edu.cn \textrm{(Q. Zeng)}}
\email{czeng@sdtbu.edu.cn \textrm{(C. Zeng)}}
\email{yxue@buaa.edu.cn \textrm{(Y. Xue)}}
\email{hehx@buaa.edu.cn \textrm{(H. He)}}

\address{\footnotesize\textsuperscript{\textnormal{a}}School of Mathematical Sciences, Beihang University, Beijing
100083, P. R. China}
\address{\footnotesize\textsuperscript{\textnormal{b}}School of Mathematics and Information Science, Shandong Technology
and Business University, Yantai, Shandong Province, 264003, P. R. China}

\subjclass[2020]{Primary 05C63; Secondary 53C23, 28A80}
\keywords{Infinite graphs, Substitution, Gromov hyperbolicity}
\thanks{*Corresponding author. }

\begin{abstract}
In this paper, we construct a class of infinite graphs, called substitution
graphs. The vertex set consists of all finite words over a finite alphabet.
A directed graph is formed by adding vertical edges connecting each word to
its children and horizontal edges defined recursively by two finite directed
graphs $\overrightarrow{G}$ and $\overrightarrow{J}$: edges among vertices
with the same parent follow $\overrightarrow{G}$, while edges between
vertices whose parents are horizontally linked follow $\overrightarrow{J}$.
The substitution graph is defined as its underlying graph. Substitution
graphs provide a purely combinatorial model of self-similar structures,
independent of any underlying geometric structure. Furthermore, we establish a necessary
and sufficient condition for substitution graphs to be hyperbolic,
formulated in terms of the vanishing of path matrices associated with
sufficiently long shortest horizontal paths. Based on this characterization,
we further derive several conditions that are either necessary or sufficient
for hyperbolicity, depending only on the generators $\overrightarrow{G}$ and 
$\overrightarrow{J}$.
\end{abstract}

\maketitle

\section{Introduction}

Graph substitution operations provide a fundamental mechanism for generating
infinite structures via the iteration of local substitution rules. Formal
frameworks for graph substitution were developed systematically by Previte 
\cite{Previte}, who introduced vertex--replacement schemes and iterated
substitution processes to analyze the asymptotic and dynamical behavior of
infinite graphs. Beyond the graph-theoretic setting, substitution schemes
arise naturally in symbolic dynamics, where they are central to spectral
theory and to recognizability issues for substitution subshifts \cite%
{Queffelec,Fogg,Mosse}. In geometric contexts, they provide standard
constructions of substitution tilings, including aperiodic examples and
tilings with fractal boundary behavior, and they serve as a framework for
analyzing inflation factors and hierarchical structure \cite%
{Mozes,Frank,Smilansky,Frettloh}. Analogous recursive constructions also
appear in models that exhibit self-similar or scale-free topology \cite%
{Chilakamarri,Li}.

Gromov hyperbolicity plays a central role in geometric group theory and the
large-scale geometry of graphs. A graph is hyperbolic if its geodesic
triangles are uniformly thin \cite{Gromov}. This condition captures negative
curvature at large scale and has many equivalent forms (see also \cite{Woess}
and Definition \ref{gromov} in this paper). Since Gromov's foundational work 
\cite{Gromov}, hyperbolicity has been extensively studied for Cayley graphs 
\cite{Meier} and random graphs \cite{Mitsche}.

Considerable attention has recently been directed toward establishing Gromov
hyperbolicity for various metrics in geometric function theory.\ For
example, the hyperbolicity of Riemann surfaces endowed with the Poincar\'{e}
metric has also been investigated in several works \cite%
{Hasto2006,Balogh,Hasto2010,Hasto2011}. In particular, several studies have
revealed that the hyperbolicity of a geodesic metric space is equivalent to
the hyperbolicity of a suitably associated graph; see, for instance, \cite%
{Rodriguez2004,Portilla,Rodriguez2007,Bermudo}. Hence, establishing criteria
for graph hyperbolicity is a natural next step, especially for structures
generated by substitution rules.

Many discrete structures of interest arise from iterated function systems
(IFSs), which generate self-similar sets \cite{Hutchinson,Falconer} and
associated hierarchical graphs \cite{Bao}. Lau and Wang \cite%
{Lau2009,Lau2017} demonstrated that for a contractive IFS, the symbolic
space naturally admits a hyperbolic graph structure (augmented tree)
reflecting the relationship among neighboring cells, whose hyperbolic
boundary with the Gromov metric is H\"{o}lder equivalent to the attractor.
This line of inquiry originates from Kaimanovich's geometric interpretation 
\cite{Kaimanovich} of the Martin boundary for Markov chains on the Sierpi%
\'{n}ski gasket, a problem initially investigated by Denker and Sato \cite%
{Denker}. This geometric correspondence has proved instrumental in
subsequent studies, including the analysis of probabilistic potential theory
on fractals \cite{Kong2017,Kong-arXiv} and the classification of
bi-Lipschitz equivalence \cite{Luo}. More recently, Kong, Lau and Wang \cite%
{Kong2021} introduced a broad class of hyperbolic expansion graphs, thereby
capturing the essential properties from the augmented trees and the
hyperbolic boundaries.

In this paper, we introduce a class of infinite graphs, termed \textbf{%
substitution graphs}. Given a finite alphabet $\Sigma $ of size $N$, one
naturally obtains a rooted tree whose vertices are all finite words over $%
\Sigma $, with vertical edges connecting each word to its children.
Horizontal edges are then added recursively according to two finite directed
graphs $\overrightarrow{G}$ and $\overrightarrow{J}$, where $\overrightarrow{%
G}$\ is a directed simple graph on $N$ vertices corresponding to the letters
of\ $\Sigma $, and $\overrightarrow{J}$ is a subgraph of the complete
directed bipartite graph between two disjoint copies of $\Sigma $. Copies of 
$\overrightarrow{G}$ connect vertices with the same parent, while copies of $%
\overrightarrow{J}$ create horizontal edges between vertices whose parents
are already connected. The substitution graph is defined as the underlying
graph of the directed graph formed by the vertical and horizontal edges. A
precise definition of substitution graphs is given in Section 2. This
resulting structure serves as a purely combinatorial analogue to the
augmented trees associated with IFS. However, unlike the augmented trees and
expansion graphs arising from IFS, our construction is independent of any
underlying geometric structure. As a result, the substitution graphs may
have richer horizontal connections and need not be tree-like globally. We
aim to determine when such substitution graphs are hyperbolic and how this
depends on the generators $\overrightarrow{G}$ and $\overrightarrow{J}$.

Our first main result is a complete characterization of hyperbolicity for
substitution graphs. For each shortest horizontal path we associate a path
matrix encoding the substitution of the path under $\overrightarrow{G}$ and $%
\overrightarrow{J}$. Roughly speaking, we prove that a substitution graph $%
\mathcal{U}$ is hyperbolic if and only if all sufficiently long shortest
horizontal paths have path matrices that vanish (see Theorem \ref{th-equ}).
Based on this characterization, we derive effective sufficient and necessary
conditions on the generators $\overrightarrow{G}$ and $\overrightarrow{J}$
that either ensure or rule out the hyperbolicity of $\mathcal{U}$, which can
be verified in finitely many steps. In particular, in Theorem \ref{th-degree}
we show that hyperbolicity is guaranteed if the biadjacency matrix $K$ of $%
\overrightarrow{J}$ is nilpotent and the coupling graph has uniformly
bounded in-degree or out-degree, where the coupling graph is obtained by
gluing two copies of $\overrightarrow{G}$ along a copy of $\overrightarrow{J}
$ (see Definition \ref{def-coup} for more details); in Theorem \ref{th-lac}
we establish a hyperbolicity criterion based on the nilpotency of $K$ and a
local arborescence condition combined with the acyclicity of the underlying
coupling graph, which rigidly constrains the behavior of shortest horizontal
paths.

The paper is organized as follows. Section 2 introduces substitution graphs,
together with illustrative examples. Section 3 reviews Gromov hyperbolicity
and presents preliminary results. Section 4 establishes the main
characterization theorem and develops necessary and sufficient conditions
for hyperbolicity. Section 5 illustrates the results by analyzing the
examples introduced earlier.

\bigskip

\section{Substitution graphs}

For a finite alphabet $\Sigma =\{1,2,\ldots ,N\},$ there exists a natural 
\emph{rooted tree} structure $(V,\overrightarrow{E}_{v})$. The vertex set $V$
is the set of all finite sequences over the alphabet $\Sigma ,$ i.e. 
\begin{equation*}
V=\bigcup_{n=0}^{\infty }\Sigma _{n},\ \Sigma _{n}=\Sigma ^{n}\text{ }(\text{%
convention }\Sigma _{0}=\{\varnothing \}).
\end{equation*}%
The set of \emph{vertical edges} is defined as 
\begin{equation*}
\overrightarrow{E}_{v}=\{(\mathbf{i},\mathbf{i}\sigma ):\mathbf{i}\in
V,\sigma \in \Sigma \}.
\end{equation*}%
Then $(V,\overrightarrow{E}_{v})$ is a directed rooted tree with $\vartheta
=\varnothing $ as the root.

Note that in a rooted tree, every vertex can have multiple children but
every non-root vertex has exactly one parent. For a non-empty word $u\in V,$
we use $u^{[-k]}$ to denote the unique $k$-th ancestor of $u.$ For
simplicity of notation, we also denoted by $u^{-}$ the parent ($1$-th
ancestor) of $u$. We denote by $\varphi (u)$ the last letter of $u.$

Let $\Sigma ^{\prime }=\{1^{\prime },2^{\prime },\ldots ,N^{\prime }\}$ be a
disjoint copy of $\Sigma ,$ where $i^{\prime }\in \Sigma ^{\prime }$
corresponds to $i\in \Sigma .$ We define the \emph{complete directed
bipartite graph }from $\Sigma $ to $\Sigma ^{\prime }$, denoted by $%
\overrightarrow{K}_{N,N}$, as the graph with vertex set $\Sigma \cup \Sigma
^{\prime }$ and edge set consisting of all directed edges from $\Sigma $ to $%
\Sigma ^{\prime }.$ Formally, the edge set of $\overrightarrow{K}_{N,N}$ is
given by%
\begin{equation*}
E(\overrightarrow{K}_{N,N})=\{(i,j^{\prime }):i\in \Sigma ,\text{ }j^{\prime
}\in \Sigma ^{\prime }\}.
\end{equation*}

\begin{definition}
Let $(V,\overrightarrow{E}_{v})$ be the natural rooted tree generated by $%
\Sigma =\{1,2,\ldots ,N\}.$ Given a directed simple graph $\overrightarrow{G}
$ on $\Sigma \ $and a subgraph $\overrightarrow{J}$ of $\overrightarrow{K}%
_{N,N}$, we recursively define the set of \emph{horizontal edges} $%
\overrightarrow{E}_{h}\ $as follows. For any $n\geq 1$ and any two distinct
vertices $u,v\in \Sigma _{n},$ $(u,v)\in \overrightarrow{E}_{h}$ if either

\begin{enumerate}
\item[(i)] $u^{-}=v^{-}$ and $(\varphi (u),\varphi (v))\in E(\overrightarrow{%
G}),$ or

\item[(ii)] $(u^{-},v^{-})\in \overrightarrow{E}_{h}$ and $(\varphi
(u),\varphi (v)^{\prime })\in E(\overrightarrow{J}).$
\end{enumerate}

Let $\overrightarrow{E}=\overrightarrow{E}_{v}\cup \overrightarrow{E}_{h}$
and $\mathcal{D}=(V,\overrightarrow{E})$. The \emph{substitution graph},
denoted by $\mathcal{U}=(V,E)$, is defined as the \emph{underlying graph} of 
$\mathcal{D}$.
\end{definition}

\begin{notation}
The substitution graph $\mathcal{U}$ is generated by $\overrightarrow{G}$
and $\overrightarrow{J}$, we also use the notation $(\overrightarrow{G},%
\overrightarrow{J})$ for $\mathcal{U}$ when emphasizing its generating
components. Let $G$ and $J$ denote the underlying graphs of $\overrightarrow{%
G}$ and $\overrightarrow{J}$, respectively.

For a horizontal edge $(u,v)\in \vec{E}_{h}$, we say that $(u,v)$ is \emph{%
of $G$--type} if it is produced by substitution\emph{\ }rule~(i), i.e.%
\begin{equation*}
u^{-}=v^{-}\text{ and }(\varphi (u),\varphi (v))\in E(\vec{G}),
\end{equation*}%
and that $(u,v)$ is \emph{of $J$--type} if it is produced by substitution
rule~(ii), i.e. 
\begin{equation*}
(u^{-},v^{-})\in \vec{E}_{h}\text{ and }(\varphi (u),\varphi (v)^{\prime
})\in E(\vec{J}).
\end{equation*}%
This classification is unique for each horizontal edge.
\end{notation}

Note that if $N=1,$ then the substitution graph $\mathcal{U}$\ degenerates
to a ray (a one-way infinite path), and if $E(G)=\emptyset $, then $\mathcal{%
U}$ is a tree, which is trivially hyperbolic. To avoid such trivial cases,
we assume $N\geq 2$ and $E(G)\neq \emptyset $\ throughout the remainder of
this paper.

We illustrate the definition by four examples, see Fig. \ref{Examples}.
Their hyperbolicity will be analyzed in Section 4.4 using the criteria
established later.

\begin{example}[Levelwise Arborescent Graph]
\label{exa1}Let $\overrightarrow{G}$ be a directed path of length $2$ (i.e. $%
1\rightarrow 2\rightarrow 3$) and $E(\overrightarrow{J})=\{(2,1^{\prime })\}$%
. Each level of the substitution graph is a Arborescent.
\end{example}

\begin{example}
\label{exa2}Let $\overrightarrow{G}$ be a directed graph with vertices $%
\{1,2,3,4\}$ and edges $1\rightarrow 2,$ $1\rightarrow 3,$ $4\rightarrow 3,$
and let $E(\overrightarrow{J})=\{(2,1^{\prime })\}$. Here, $\overrightarrow{G%
}$ satisfies the local arborescence condition (see Definition \ref{LAC}), a
property relevant for checking the hyperbolicity of the substitution graph.
\end{example}

\begin{example}[Levelwise Torus Graph]
\label{exa3}Let $\overrightarrow{G}$ be a directed 3-cycle and $%
\overrightarrow{J}$ be the identity mapping. The level-$n$ horizontal graph
corresponds to a discrete torus structure.
\end{example}

\begin{example}[Cyclic Inflation Graph]
\label{exa4}Let $\overrightarrow{G}$ be a directed 4-cycle and $E(%
\overrightarrow{J})=\{(4,1^{\prime })\}$.
\end{example}

\begin{figure}[tbph]
\centering
\par
\begin{subfigure}[t]{0.48\textwidth}
        \centering
        \includegraphics[height=2cm]{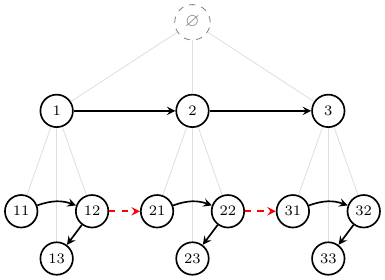} 
        \caption{Levelwise arborescent graph (Ex. 1)}
    \end{subfigure}\hfill 
\begin{subfigure}[t]{0.48\textwidth}
        \centering
        \includegraphics[height=2cm]{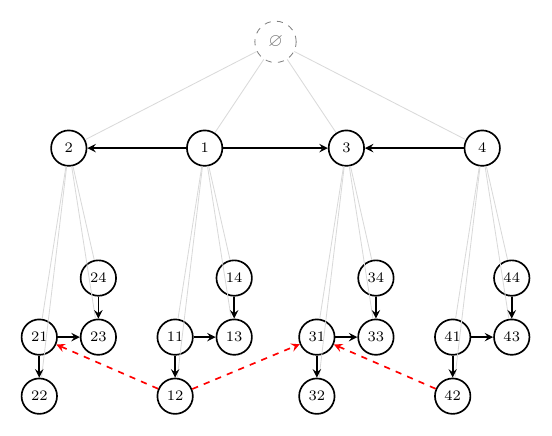} 
        \caption{Ex. 2}
    \end{subfigure}
\par
\vspace{1ex}
\par
\begin{subfigure}[t]{0.48\textwidth}
        \centering
        \includegraphics[height=2cm]{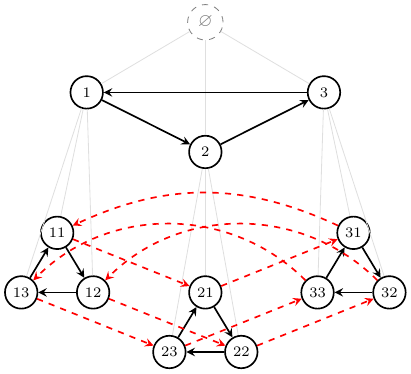} 
        \caption{Levelwise torus graph (Ex. 3)}
    \end{subfigure}\hfill 
\begin{subfigure}[t]{0.48\textwidth}
        \centering
        \includegraphics[height=2cm]{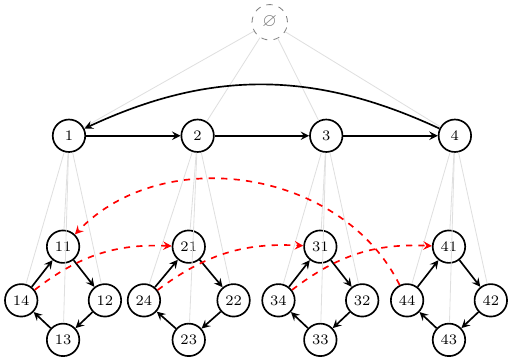} 
        \caption{Cyclic inflation graph (Ex. 4)}
    \end{subfigure}
\caption{Examples of substitution graphs.parent}
\label{Examples}
\end{figure}

\bigskip

\section{Gromov hyperbolicity and preliminaries}

\subsection{Gromov hyperbolicity and known results}

Throughout this section, we consistently assume that the graph $\Gamma
=(V,E) $ is a \emph{rooted graph}, i.e., a locally finite connected
undirected graph with a fixed root vertex $\vartheta \in V$. A finite
sequence of vertices 
\begin{equation*}
p=(u_{0},u_{1},\dots ,u_{\ell })
\end{equation*}%
is called a \emph{walk} if $(u_{q-1},u_{q})\in E$ for all $q=1,\dots ,\ell $%
. The length of $p$ is $L(p):=\ell $. If, in addition, all edges $%
(u_{q-1},u_{q})$ are distinct, then $p$ is a \emph{path}. Let $d(u,v)$
denote the \emph{graph distance} between $u$ and $v$, i.e., the length of
the shortest path from $u$ to $v$ if such path exists ($d(u,v)=\infty $
otherwise).\ A path between two vertices $u$ and $v$ is called a \emph{%
geodesic} if its length equals the graph distance $d(u,v)$. We use $%
\left\vert u\right\vert =d(\vartheta ,u)$ to denote the distance from the
root to the vertex $u$. For any integer $m\geq 0$ and $v\in V$, let 
\begin{equation*}
J_{m}(v):=\{u\in V:\left\vert u\right\vert =\left\vert x\right\vert
+m\},\,J_{-m}(v):=\{w\in V:v\in J_{m}(w)\}
\end{equation*}%
be the $m$-th descendant set and the $m$-th predecessor set of $v$
respectively.

Let $E_{v}=\{(u,v)\in E:\left\vert u\right\vert -\left\vert v\right\vert
=\pm 1\}$ and $E_{h}=\{(u,v)\in E:\left\vert u\right\vert =\left\vert
v\right\vert \}$ denote the vertical edge set and the horizontal edge set
respectively. Clearly $E=E_{v}\cup E_{h}$ and $E_{v}\cap E_{h}=\emptyset $.
A \emph{horizontal path }is a finite sequence of vertices 
\begin{equation*}
p=(u_{0},u_{1},\cdots ,u_{\ell })
\end{equation*}%
such that $(u_{i},u_{i+1})\in E_{h}$ for every $0\leq i<\ell $. In
particular, if there exists $n\geq 0$ such that $\left\vert u_{i}\right\vert
=n$ for all $i$, then we say that $p$ is a horizontal path at level $n$. For
vertices $u,v\in V$ with $\left\vert u\right\vert =\left\vert v\right\vert $%
, a \emph{shortest horizontal path} from $u$ to $v$ is a horizontal path 
\begin{equation*}
(u_{0},u_{1},\cdots ,u_{\ell }),\text{ }u_{0}=u,\text{ }u_{\ell }=v,
\end{equation*}%
whose length $\ell $ (the number of edges) is minimal among all horizontal
paths from $u$ to $v$.

We define the \emph{horizontal distance }$d_{h}(\cdot ,\cdot )$ to be the
graph distance in the horizontal graph $(V,E_{h}).$ It follows immediately
that $d_{h}(x,y)=\infty $ for $\left\vert x\right\vert \neq \left\vert
y\right\vert ,$ and $d(x,y)\leq d_{h}(x,y)$ for all $x,y\in V$. When the
equality $d(x,y)=d_{h}(x,y)$ holds, there exists a geodesic that lies in $%
(V,E_{h})$, which we call a \emph{horizontal geodesic} of $\Gamma $.

\begin{definition}[\protect\cite{Gromov}]
\label{gromov}Suppose $\Gamma $ is a rooted graph, the \emph{Gromov product}
of $u,v\in V(\Gamma )$ is defined by 
\begin{equation*}
\left\vert u\wedge v\right\vert :=\frac{1}{2}(\left\vert u\right\vert
+\left\vert v\right\vert -d(u,v)).
\end{equation*}%
We call $\Gamma $ is \emph{(Gromov) }$\delta $\emph{-hyperbolic} if there
exists $\delta \geq 0$ such that 
\begin{equation*}
\left\vert u\wedge v\right\vert \geq \min \{\left\vert u\wedge w\right\vert
,\left\vert w\wedge v\right\vert \}-\delta \text{ for all }u,v,w\in V(\Gamma
).
\end{equation*}
\end{definition}

\begin{definition}[\protect\cite{Kong2021}]
A rooted graph $\Gamma =(V,E)$ is said to be an \emph{expansive graph} if
for $x,y\in V,$ 
\begin{equation*}
d_{h}(x,y)>1\Rightarrow d_{h}(u,v)>1,\text{ }\forall u\in J_{1}(x),\,v\in
J_{1}(y).
\end{equation*}%
Let $m,k$ be two positive integers. We call $\Gamma $ is $(m,k)$\emph{%
-departing} if for $x,y\in V,$%
\begin{equation*}
d_{h}(x,y)>k\Rightarrow d_{h}(u,v)>2k,\text{ }\forall u\in J_{m}(x),\,v\in
J_{m}(y).
\end{equation*}
\end{definition}

An \emph{augmented (rooted) tree} $\Gamma =(V,E)$ (as defined in [14, 26,
28]) is an expansive graph in which the vertical graph $(V,E_{v})$ is a
tree. The following theorem provide a useful criterion for the hyperbolicity
of the expansive graphs.

\begin{theorem}[\protect\cite{Kong2021}]
\label{th-Kong}Let $\Gamma \ $be an expansive graph. The following
assertions are equivalent.

\begin{enumerate}
\item[(i)] $\Gamma $ is hyperbolic;

\item[(ii)] $\exists L<\infty $ such that the lengths of all horizontal
geodesics are bounded by $L$;

\item[(iii)] $\Gamma $ is $(m,k)$-departing for some positive integers $m$
and $k.$
\end{enumerate}
\end{theorem}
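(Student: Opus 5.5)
The plan is to prove (i)$\Rightarrow$(ii)$\Rightarrow$(iii)$\Rightarrow$(i). Throughout I use one consequence of expansiveness. By contraposition, if $u\in J_1(x)$ and $v\in J_1(y)$ are equal or horizontally adjacent, then $d_h(x,y)\le 1$. Hence a horizontal path of length $\ell$ at level $n$ projects to a horizontal walk of length $\le \ell$ at level $n-1$ between the parents of its endpoints. Iterating, $d_h(u^{[-j]},v^{[-j]})\le d_h(u,v)$ for every $j$.

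A second consequence, used in the last step, is that geodesics can be put into a \emph{canonical form}: go up vertically, then move horizontally, then go down. To see this, take any geodesic and push each "down–horizontal–up" detour upward using the projection property; the length does not increase. So $d(u,v)=\min_{j}\bigl(2j+d_h(u^{[-j]},v^{[-j]})\bigr)$ for $|u|=|v|$, with the obvious modification when $|u|\neq |v|$.

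\textbf{(i)$\Rightarrow$(ii).} Let $p$ be a horizontal geodesic at level $n$ with endpoints $x,y$ and length $\ell$. Let $w$ be a midpoint of $p$. Then $d(x,w),d(w,y)\ge \lfloor \ell/2\rfloor$, since subpaths of geodesics are geodesics. This gives $|x\wedge y|=n-\ell/2$ and $|x\wedge w|,|w\wedge y|\ge n-\ell/4-1$. The $\delta$-inequality then forces $\ell/4\le\delta+1$, so $L=4\delta+4$ works.

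\textbf{(ii)$\Rightarrow$(iii).} Assume every horizontal geodesic has length $\le L$, and put $k=L$. Take $x,y$ at level $n$ and $u\in J_m(x)$, $v\in J_m(y)$ with $d_h(u,v)\le 2k$. I want to show $d_h(x,y)\le k$ for a suitable $m$.

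By the canonical form, $d(u,v)=2j+h$, where $h=d_h(u^{[-j]},v^{[-j]})$. The horizontal piece of this canonical geodesic is itself a horizontal geodesic, so $h\le L$. Also $2j\le d(u,v)\le 2k$, so $j\le k$. Taking $m\ge k$, I then project from level $n+m-j$ up to level $n$ and get $d_h(x,y)\le h\le L=k$.

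This step is the main point where one must check that the horizontal piece of a canonical geodesic is a geodesic of $\Gamma$, and not merely shortest in the horizontal graph. It is: any shortcut through it would shorten the whole geodesic. With this, $\Gamma$ is $(k,k)$-departing.

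\textbf{(iii)$\Rightarrow$(i).} This is the step I expect to be hardest. For $u,v$, let $\ell(u,v)$ be the largest level $t$ such that the level-$t$ ancestors of $u,v$ satisfy $d_h\le k$. First I would show $|u\wedge v|=\ell(u,v)+O(1)$, with the constant depending on $m,k$. For the upper bound on $d(u,v)$, go up to level $\ell$, cross at most $k$ horizontal edges, and go down. For the lower bound, use the canonical form. Above level $\ell(u,v)+m$, the ancestors are at horizontal distance $>k$; departing applied repeatedly doubles this distance every $m$ levels going down. Any canonical geodesic crossing horizontally at a level $t>\ell+m$ therefore pays roughly $2^{(t-\ell)/m}k$ horizontally, which exceeds the $2(t-\ell)$ it saves vertically, up to a constant.

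Second, I would show the quasi-ultrametric inequality $\ell(u,v)\ge\min\{\ell(u,w),\ell(w,v)\}-m$. At level $s=\min$, the ancestors of $u,w$ and of $w,v$ are within $k$, so the ancestors of $u,v$ are within $2k$. The contrapositive of departing then gives that the ancestors $m$ levels higher are within $k$. Combining the two facts yields the Gromov inequality with $\delta$ depending only on $m$ and $k$.
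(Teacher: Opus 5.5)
The paper gives no proof of this theorem. It is quoted from \cite{Kong2021} and used as a black box, so your proposal has to stand on its own. The overall plan is sound, and (i)$\Rightarrow$(ii) and (ii)$\Rightarrow$(iii) work after three small repairs.
In (i)$\Rightarrow$(ii), the bound you actually need is $d(x,w),d(w,y)\le\lceil \ell/2\rceil$, which is trivial since $w$ lies on $p$; the lower bound you quote points the wrong way.
In the canonical-form lemma, deleting down--horizontal--up detours is not enough. You must also commute a horizontal step past an adjacent vertical one: a horizontal step followed by an up step can be replaced by an up step followed by at most one horizontal step.
Finally, the vertical graph of an expansive graph need not be a tree, so $u^{[-j]}$ must be read as ``some ancestor''. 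This costs only additive constants, because two ancestors of one vertex at the same level are at horizontal distance $\le 1$.

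The real gap is in (iii)$\Rightarrow$(i), at ``departing applied repeatedly doubles this distance every $m$ levels''. The $(m,k)$-departing property concerns the single threshold $k$. It upgrades $d_h>k$ to $d_h>2k$ and says nothing about pairs already at distance $>2k$, so it cannot be iterated as it stands. What you need is a subdivision lemma:
\begin{itemize}
\item Let $a,b$ be at level $s$ with $d_h(a,b)=D\ge k+1$, let $u\in J_m(a)$, $v\in J_m(b)$, and let $z_0=u,\dots,z_{D'}=v$ be a shortest horizontal path.
\item By expansiveness, the level-$s$ ancestors $a_i$ of the $z_i$ move by horizontal distance at most $1$ per step.
\item Cut the path at the successive first indices $i_r$ where $d_h(a_{i_{r-1}},a_{i_r})$ reaches $k+1$. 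This produces at least $\lfloor D/(k+1)\rfloor$ pieces, each of length $\ge 2k+1$ by departing (subpaths of a shortest path are shortest).
\item The leftover piece has length $\ge D-(k+1)\lfloor D/(k+1)\rfloor$, again by expansiveness.
\end{itemize}
Hence
\[
D'\ \ge\ D+k\lfloor D/(k+1)\rfloor ,\qquad\text{so}\qquad D'-k\ \ge\ \frac{2k+1}{k+1}\,(D-k).
\]
The growth is exponential with ratio $(2k+1)/(k+1)$ rather than $2$. That still dominates the linear vertical saving $2(t-\ell)$, so your estimate $|u\wedge v|=\ell(u,v)+O(1)$ survives with a constant depending on $m,k$. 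Your quasi-ultrametric step is correct as written.

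Once you have this lemma, a shorter route is (iii)$\Rightarrow$(ii)$\Rightarrow$(i). For (iii)$\Rightarrow$(ii), take a horizontal geodesic of length $D$ whose ancestors $m$ levels up are at distance $D'$. The bounds $D\le D'+2m$ and $D\ge D'+k\lfloor D'/(k+1)\rfloor$ bound $D'$, and hence $D$. For (ii)$\Rightarrow$(i), run your $\ell(u,v)$ argument with threshold $L$. No growth estimate is needed at all, since the horizontal part of a canonical geodesic has length $\le L$ and so sits at a level $\le\ell(u,v)$.
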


\bigskip

\subsection{Preliminaries}

To establish the criteria for the hyperbolicity of substitution graph, we
investigate the algebraic structure of the semigroup generated by a $0$-$1$
matrix $A$ and its transpose $A^{\mathrm{T}}$. The existence of zero
elements within the generated semigroup plays a key role in our subsequent
analysis.

Let $A\in M_{n}(\mathbb{R})$. The semigroup generated by $A$ and $A^{\mathrm{%
T}}$ is defined as 
\begin{equation*}
\left\langle A,A^{\mathrm{T}}\right\rangle =\{A_{1}A_{2}\cdots A_{k}:k\in 
\mathbb{N},A_{i}\in \{A,A^{\mathrm{T}}\}\}.
\end{equation*}%
The following lemmas investigate the vanishing of elements in the semigroup.

\begin{lemma}
\label{le-A1Ak}Let $A$ be a $n\times n$ $0$-$1$ matrix. If $A$ is not
nilpotent, then every element in the semigroup $\left\langle A,A^{\mathrm{T}%
}\right\rangle $ is non-zero.
\end{lemma}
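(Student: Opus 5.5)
The plan is to pass to the directed graph of $A$, find a directed cycle in it, and then show that every product in $\left\langle A,A^{\mathrm{T}}\right\rangle$ has a positive entry obtained by walking back and forth along that cycle. Because every matrix involved has nonnegative entries, no cancellation can occur, so one positive contribution to an entry is enough.

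First, I associate with $A$ the directed graph $D_A$ on $\{1,\dots,n\}$ with an arc $i\to j$ exactly when $A_{ij}=1$. Since $A$ is not nilpotent, $A^{n}\neq 0$. The entries of $A^{n}$ are nonnegative integers, and $(A^{n})_{ij}$ counts directed walks of length $n$ from $i$ to $j$ in $D_A$. So some walk $i_0\to i_1\to\cdots\to i_n$ exists. It visits $n+1$ vertices among $n$ possible values, so some vertex repeats. Hence $D_A$ contains a directed cycle
\begin{equation*}
c_0\to c_1\to\cdots\to c_{m-1}\to c_0, \qquad m\geq 1,
\end{equation*}
where a loop $A_{c_0c_0}=1$ is the case $m=1$. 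I take indices modulo $m$, so that $A_{c_j,c_{j+1}}=1$ and hence $(A^{\mathrm{T}})_{c_{j+1},c_j}=1$ for all $j$.

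Next, take any product $P=A_1A_2\cdots A_k$ with each $A_t\in\{A,A^{\mathrm{T}}\}$. I define a sequence of indices along the cycle by $j_0=0$ and
\begin{equation*}
j_t=j_{t-1}+1 \text{ if } A_t=A,\qquad j_t=j_{t-1}-1 \text{ if } A_t=A^{\mathrm{T}},
\end{equation*}
taken modulo $m$. By the choice of the cycle, $(A_t)_{c_{j_{t-1}},c_{j_t}}=1$ for every $t$. Expanding the matrix product gives
\begin{equation*}
P_{c_0,c_{j_k}}=\sum_{l_1,\dots,l_{k-1}}(A_1)_{c_0,l_1}(A_2)_{l_1,l_2}\cdots (A_k)_{l_{k-1},c_{j_k}}.
\end{equation*}
This is a sum of nonnegative terms, and the term with $l_t=c_{j_t}$ equals $1$. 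Therefore $P_{c_0,c_{j_k}}\geq 1$, so $P\neq 0$.

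The only step needing care is the first one: extracting a directed cycle from non-nilpotency. The pigeonhole argument on a walk of length $n$ handles it, and it uses the nonnegativity of $A$ to interpret entries of $A^{n}$ as walk counts. After that, the proof is bookkeeping along the cycle.
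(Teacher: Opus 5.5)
Your proof is correct and follows the paper's argument. In both, for an arbitrary word in $A$ and $A^{\mathrm{T}}$ you pick indices step by step inside a subgraph of the digraph of $A$ where every vertex has both an in- and an out-neighbour, then use nonnegativity to get an entry $\geq 1$. The paper takes a strongly connected component containing a cycle, while you step forward or backward along the cycle itself, and you also justify the cycle's existence via $A^{n}\neq 0$ and pigeonhole, which the paper only asserts.
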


\begin{proof}
Let $\Gamma $ denote the directed graph associated with$\ A$ whose vertex
set is $\{1,\ldots ,n\}$ and in which there is a directed edge $i\rightarrow
j$ if and only if $A_{i,j}=1$. The non-nilpotency of $A$ implies that $%
\Gamma $ contains a directed cycle. Let $S$ be a strongly connected
component of $\Gamma $ that contains a directed cycle, then the subgraph
induced by $S$ is a strongly connected finite directed graph with at least
one edge. In particular, every vertex in $S$ has both in-neighbour and
out-neighbour in $S$.

We claim that for any every integer $k\geq 1$ and any sequence of directions 
$d_{1}\cdots d_{k}\in \{0,1\}^{k},$ there exist vertices $v_{0},\ldots
,v_{k} $ in $S$ such that for each $t=1,\ldots ,k$%
\begin{eqnarray*}
d_{t} =0&\Longrightarrow& (v_{t-1},v_{t})\in E(\Gamma ) \\
d_{t} =1&\Longrightarrow& (v_{t},v_{t-1})\in E(\Gamma ).
\end{eqnarray*}%
Choose any $v_{0}\in S$. For $t=1$, if $d_{1}=0$ pick $v_{1}$ to be an
out-neighbour of $v_{0}$ in $S$, and if $d_{1}=1$ pick $v_{1}$ to be an
in-neighbour of $v_{0}$ in $S$. Proceed inductively. Having chosen $%
v_{0},\dots ,v_{t-1}\in S$, if $d_{t}=0$ choose $v_{t}$ to be an
out-neighbour of $v_{t-1}$ in $S$, and if $d_{t}=1$ choose $v_{t}$ to be an
in-neighbour of $v_{t-1}$ in $S$. The required edges exist by the above
property of $S$, and all $v_{t}$ remain in $S$. This proves the claim.

Now let $M=A_{1}A_{2}\cdots A_{k}\in \left\langle A,A^{\mathrm{T}%
}\right\rangle .$ Define a direction sequence by 
\begin{equation*}
d_{t}=\left\{ 
\begin{array}{ll}
0, & \text{if }A_{t}=A, \\ 
1, & \text{if }A_{t}=A^{\mathrm{T}},%
\end{array}%
\right. \text{for all }1\leq t\leq k.
\end{equation*}

Applying the claim, choose vertices $v_{0},\dots ,v_{k}\in S$ with the
corresponding edge property. Then, for each $t=1,\dots ,k$ we have 
\begin{equation}
(A_{t})_{v_{t-1},v_{t}}=1,  \label{A_t=1}
\end{equation}%
because if $A_{t}=A$ and $(v_{t-1},v_{t})\in E(\Gamma )$, then $%
A_{v_{t-1},v_{t}}=1$, while if $A_{t}=A^{\mathrm{T}}$ and $%
(v_{t},v_{t-1})\in E(\Gamma )$, then $A_{v_{t-1},v_{t}}^{\mathrm{T}%
}=A_{v_{t},v_{t-1}}=1$. By the definition of matrix multiplication and (\ref%
{A_t=1}), we have%
\begin{equation*}
M_{v_{0},v_{k}}=(A_{1}A_{2}\cdots A_{k})_{v_{0},v_{k}}\geq
\prod_{t=1}^{k}(A_{t})_{v_{t-1},v_{t}}=1.
\end{equation*}%
Hence $M$ is non-zero. Since $M$ was arbitrary, every element of $\langle
A,A^{\mathrm{T}}\rangle $ is non-zero.
\end{proof}

Let $A\in M_{n}(\mathbb{R)}$ be a $0$-$1$ nilpotent matrix. We associate
with $A$ a poset $(P,\leq )$ on the set $P=\{{1,\ldots ,n\}}$ whose covering
relation $j$ covers $i$ holds precisely when $A_{i,j}=1.$

\begin{definition}
Let $(P,\leq )$ be a finite poset. We say that $P$ is \emph{graded of rank }$%
\emph{r}$ if there exists a rank function 
\begin{equation*}
\rho :P\longrightarrow \{0,1,\dots ,r\}
\end{equation*}%
such that

\begin{enumerate}
\item[(i)] every minimal element of $P$ has rank $0$;

\item[(ii)] whenever $y$ covers $x$ in $P$, one has $\rho (y)=\rho (x)+1$;

\item[(iii)] the image of $\rho $ is exactly $\{0,1,\dots ,r\}$, i.e. every
maximal chain in $P$ has length $r$.
\end{enumerate}
\end{definition}

\begin{lemma}
\label{le-SN}Let $A\in M_{n}(\mathbb{R})$ be a nilpotent $0$-$1$ matrix with
nilpotency index $I$ and $M=A_{1}\cdots A_{k}\in \left\langle A,A^{\mathrm{T}%
}\right\rangle $ with each $A_{i}\in \{A,A^{\mathrm{T}}\}$. Suppose the
poset $(P,\leq )$ associated with $A$ is graded. Then $M=\boldsymbol{0}$ if
and only if there exists a contiguous subproduct $M^{\prime }:=A_{p}\cdots
A_{q}$ $(1\leq p<q\leq k)$ such that 
\begin{equation*}
\left\vert N_{0}(M^{\prime })-N_{1}(M^{\prime })\right\vert \geq I,
\end{equation*}%
where $N_{0}$ and $N_{1}$ denote the counts of $A$ and $A^{\mathrm{T}}$ in
the subproduct, respectively.
\end{lemma}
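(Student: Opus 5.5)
The plan is to read $M$ combinatorially and use the rank function as a height coordinate. As in the proof of Lemma \ref{le-A1Ak}, let $\Gamma$ be the digraph of $A$. For a fixed word $A_1\cdots A_k$, the entry $M_{v_0,v_k}$ is positive if and only if there are vertices $v_0,\dots,v_k$ such that for each $t$ either $A_t=A$ and $(v_{t-1},v_t)\in E(\Gamma)$, or $A_t=A^{\mathrm T}$ and $(v_t,v_{t-1})\in E(\Gamma)$. Call such a sequence a \emph{zigzag walk}. So $M\neq\boldsymbol{0}$ exactly when a zigzag walk of the prescribed direction pattern exists.

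Set $\varepsilon_t=+1$ if $A_t=A$ and $\varepsilon_t=-1$ if $A_t=A^{\mathrm T}$, and let $s_t=\varepsilon_1+\cdots+\varepsilon_t$ with $s_0=0$. For a contiguous subproduct $A_p\cdots A_q$ we have $N_0-N_1=s_q-s_{p-1}$. Since every edge $i\to j$ of $\Gamma$ is a covering relation, the grading gives $\rho(j)=\rho(i)+1$. Hence along any zigzag walk $\rho(v_t)=\rho(v_0)+s_t$.

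Next I identify the rank $r$ with $I-1$. Since $A$ is nilpotent, $\Gamma$ is acyclic, and $(A^m)_{ij}>0$ exactly when there is a directed path of length $m$ from $i$ to $j$. Thus $I-1$ is the length of the longest directed path in $\Gamma$, that is, the longest chain of covers. By (iii) every maximal chain has length $r$, so $r=I-1$. The degenerate case $I=1$ means $A=\boldsymbol{0}$, and then both sides hold trivially; here I would take $k\geq 2$ so that a subproduct with $p<q$ exists. From now on assume $I\geq 2$.

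($\Leftarrow$) Suppose some contiguous subproduct satisfies $|s_q-s_{p-1}|\geq I$. Any zigzag walk would then have $|\rho(v_q)-\rho(v_{p-1})|\geq I>r$. This is impossible because $\rho$ takes values in $\{0,\dots,r\}$. So no zigzag walk exists and $M=\boldsymbol{0}$.

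($\Rightarrow$) I prove the contrapositive. Assume every contiguous subproduct with $p<q$ has $|N_0-N_1|\leq I-1=r$. Single factors give a difference of $1\leq r$, so in fact $\max_t s_t-\min_t s_t\leq r$. Put $c=-\min_t s_t$, so that the target heights $h_t=c+s_t$ all lie in $[0,r]$. I build the walk greedily.
\begin{enumerate}
\item[(a)] Pick $v_0$ with $\rho(v_0)=c$; this exists because the image of $\rho$ is $\{0,\dots,r\}$.
\item[(b)] If $\varepsilon_t=+1$, then $h_{t-1}<r$. A vertex $x$ with $\rho(x)<r$ cannot be maximal, since a maximal chain through it would end at $x$ and have length $\rho(x)<r$, contradicting (iii). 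So $x$ is covered by some $y$, giving $A_{x,y}=1$, and $\rho(y)=h_t$.
\item[(c)] If $\varepsilon_t=-1$, then $h_{t-1}>0$. A vertex of positive rank is not minimal by (i), so it covers some vertex, which has rank $h_t$.
\end{enumerate}
This produces a zigzag walk, so $M_{v_0,v_k}\geq 1$ and $M\neq\boldsymbol{0}$.

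The main delicate point is the identification of the covering relation with the edges of $\Gamma$, that is, that $A$ is its own Hasse diagram. This is what makes $\rho$ increase by exactly $1$ along every edge, and it is also what links $r$ to the nilpotency index. I would state explicitly that this is how the associated poset is defined, namely $j$ covers $i$ iff $A_{i,j}=1$. Otherwise a transitive edge $i\to j$ with a rank jump of $2$ would break the height identity $\rho(v_t)=\rho(v_0)+s_t$.
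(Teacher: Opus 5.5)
Your proof is correct and uses the same rank-function argument as the paper. It also makes the converse rigorous: the paper only asserts that keeping the partial sums within $\{0,\dots,I-1\}$ ``implies the existence'' of a nonzero index sequence, whereas your greedy construction (using (i) to step down and the maximal-chain form of (iii) to step up), together with your derivation of $r=I-1$, supplies that justification.

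One slip concerns the degenerate case $I=1$. The statement does not hold trivially for all $k\geq 2$: $M=AA^{\mathrm{T}}=\boldsymbol{0}$ has no contiguous subproduct with $p<q$ and $|N_0-N_1|\geq 1$. This is a defect of the strict $p<q$ in the statement itself, so the case should be excluded by assuming $A\neq\boldsymbol{0}$, or repaired by allowing $p=q$.
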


\begin{proof}
Since $A$ has nilpotency index $I,$ then $(P,\leq )$ is graded of rank $I-1$%
. Then there exists a rank function $\rho :P\longrightarrow \{0,1,\dots ,r\}$
such that for any indices $i,j,$ 
\begin{equation*}
A_{i,j}\neq 0\Longrightarrow \rho (j)=\rho (i)+1,\text{ }A_{i,j}^{\mathrm{T}%
}\neq 0\Longrightarrow \rho (j)=\rho (i)-1.
\end{equation*}%
Consequently, multiplication by $A$ (resp. $A^{\mathrm{T}}$) strictly
increase (resp. decreases) the rank of the indices corresponding to non-zero
entries.

For any contiguous subproduct $M^{\prime }:=A_{p}\cdots A_{q},$ define 
\begin{equation*}
\Delta (M^{\prime }):=N_{0}(M^{\prime })-N_{1}(M^{\prime }).
\end{equation*}%
If $(M^{\prime })_{i,j}\neq 0,$ then there exists a sequence of indices
connecting $i$ to $j$ through the non-zero entries of the factors, implying%
\begin{equation*}
\rho (j)-\rho (i)=\Delta (M^{\prime }).
\end{equation*}%
Since the image of $\rho $ is $\{0,1,\dots ,I-1\},$ any such non-zero entry
requires 
\begin{equation}
\left\vert \Delta (M^{\prime })\right\vert =\left\vert \rho (j)-\rho
(i)\right\vert \leq I-1.  \label{Delta<I-1}
\end{equation}

If there exists a subproduct $M^{\prime }$ with $\left\vert \Delta
(M^{\prime })\right\vert \geq I,$ then (\ref{Delta<I-1}) fails and thus $%
M^{\prime }=\boldsymbol{0},$ whence $M=\boldsymbol{0}$.

Conversely, suppose $M=\boldsymbol{0}$. If (\ref{Delta<I-1}) were valid for
all contiguous subproducts, then the associated rank sequence would remain
within the admissible range $\{0,1,\dots ,I-1\}$ at every step. This imply
the existence of at least one sequence of indices corresponding to non-zero
entries, yielding $M\neq 0,$ a contradiction. Therefore, there must be some
subproduct $M^{\prime }$ satisfying $\left\vert \Delta (M^{\prime
})\right\vert \geq I.$
\end{proof}

The gradedness assumption on the associated poset is necessary. If the poset
is not graded, then the counting argument in the lemma may fail. This is
illustrated by the following example.

\begin{example}
Let%
\begin{equation*}
A=%
\begin{pmatrix}
0 & 1 & 0 & 0 & 0 & 0 \\ 
0 & 0 & 1 & 0 & 0 & 0 \\ 
0 & 0 & 0 & 0 & 0 & 0 \\ 
0 & 0 & 1 & 0 & 1 & 0 \\ 
0 & 0 & 0 & 0 & 0 & 1 \\ 
0 & 0 & 0 & 0 & 0 & 0%
\end{pmatrix}%
,
\end{equation*}%
then $A$ is a nilpotent $0$--$1$ matrix with nilpotency index $I=3$.
Consider the poset $(P,\leq )$ associated with $A.$ In this poset the
element $3$ lies on two maximal chains of different lengths%
\begin{equation*}
1<2<3,\text{ }4<3,
\end{equation*}%
so $(P,\leq )$\ is not graded. For $M=A^{2}A^{\mathrm{T}}A^{2},$ we have 
\begin{equation*}
\left\vert N_{0}(M)-N_{1}(M)\right\vert \geq I\text{ but }M\neq \mathbf{0}.
\end{equation*}
\end{example}

\begin{remark}
\label{rmk0}If a element in $\langle A,A^{\mathrm{T}}\rangle $ is strictly
alternating in $A$ and $A^{\mathrm{T}}$, then it is non-zero whenever $A\neq 
\mathbf{0}$. Hence a vanishing product cannot be produced by a strictly
alternating word. This follows from the fact that any such word is of the
form $(AA^{\mathrm{T}})^{m}$, $(A^{\mathrm{T}}A)^{m}$, $(AA^{\mathrm{T}%
})^{m}A$, or $(A^{\mathrm{T}}A)^{m}A^{\mathrm{T}}$, and $AA^{\mathrm{T}}$
and $A^{\mathrm{T}}A$ are non-zero positive semidefinite matrices.
\end{remark}

\bigskip

\section{Hyperbolicity of substitution graphs}

We now turn to the study of the hyperbolicity of substitution graphs.
Throughout this section, we fix $\mathcal{U}=(V,E)$ as the substitution
graph generated by $\overrightarrow{G}$ and $\overrightarrow{J},$ where $%
\overrightarrow{G}$ is defined on $\Sigma =\{1,\cdots ,N\}.$ The
construction immediately shows that $\mathcal{U}$ is an augmented tree, and
therefore an expansive graph. Note that if $E(J)=\emptyset $, then
horizontal edges occur only between vertices having the same parent, and
there are no $\overrightarrow{J}$-type horizontal edges. In this case every
horizontal geodesic has length at most the diameter of $G$. Hence, by
Theorem \ref{th-Kong}, substitution graph $\mathcal{U}$ is hyperbolic. To
avoid such trivial situations, in the sequel we assume $N\geq 2$, $E(G)\neq
\emptyset $ and $E(J)\neq \emptyset $.

Utilizing the hyperbolicity criteria for expansive graphs (Theorem \ref%
{th-Kong}), we provide a characterization of the hyperbolicity of $\mathcal{U%
}$ as follows (Theorem \ref{th-equ}).

\subsection{General characterization of hyperbolicity}

We begin by introducing two classes of matrices that play a key role in
characterizing the hyperbolicity of substitution graphs $\mathcal{U}=(%
\overrightarrow{G},\overrightarrow{J})$.

Let $K=K(\overrightarrow{J})$ denote the \emph{biadjacency matrix} of $%
\overrightarrow{J}$, i.e., $K$ is a $0$-$1$ matrix and 
\begin{equation*}
K_{i,j}=1\text{ if and only if }(i,j^{\prime })\in E(\overrightarrow{J}).
\end{equation*}%
We also write $K_{0}=K$ and $K_{1}=K^{\mathrm{T}}$.

Let $p=(u_{0},u_{1},\cdots ,u_{\ell })$ be a horizontal path in $\mathcal{U}$%
. We may also regard $p$ as a directed path in the directed graph $\mathcal{D%
}$. The direction sequence of $p$ is defined as $\mathfrak{d}%
(p)=d_{1}d_{2}\cdots d_{\ell },$ where for each $q=1,\ldots ,\ell ,$ 
\begin{equation*}
d_{q}=\left\{ 
\begin{array}{cc}
0, & \text{if }(u_{q-1},u_{q})\in \overrightarrow{E}_{h}, \\ 
1, & \text{if }(u_{q},u_{q-1})\in \overrightarrow{E}_{h}.%
\end{array}%
\right.
\end{equation*}%
The \emph{path matrix} associated with $p$ is defined by 
\begin{equation*}
M(p)=\prod_{i=1}^{\ell }K_{d_{i}}=K_{d_{1}}K_{d_{2}}\cdots K_{d_{\ell }}.
\end{equation*}%
Note that if $(i,j^{\prime })\in E(\overrightarrow{J})$, then $%
K_{i,j}=1=K_{j,i}^{\mathrm{T}}$. Consequently, if $(x,y)\in E_{h}$ is of $J$%
-type, then either $(\varphi (x),\varphi (y)^{\prime })\in E(\overrightarrow{%
J})$ or $(\varphi (y),\varphi (x)^{\prime })\in E(\overrightarrow{J}),$ and
hence%
\begin{equation}
(K_{d})_{\varphi (x),\varphi (y)}=1,  \label{K_ij=1}
\end{equation}%
where $d=\mathfrak{d}(x,y)$ is the direction of $(x,y).$

Let $p=(x_{0},x_{1},\cdots ,x_{t})$ be a horizontal path in $\mathcal{U}$
with length $t\geq 1$. Consider the sequence of parents $%
\{x_{0}^{-},x_{1}^{-},\cdots ,x_{t}^{-}\}.$\ The property of augmented tree
implies that either $x_{i}^{-}=x_{i+1}^{-}$ or $(x_{i}^{-},x_{i+1}^{-})\in
E_{h}$.\ By removing consecutive repetitions from the parent sequence, we
obtain a horizontal walk (called projected walk of $p$) 
\begin{equation*}
p^{-}:=(y_{0},y_{1},\cdots ,y_{\ell })
\end{equation*}%
joining $x_{0}^{-}$ and $x_{t}^{-}$, where 
\begin{equation*}
y_{0}=x_{0}^{-},\text{ }y_{\ell }=x_{t}^{-}\text{ and }y_{i}\in
\{x_{0}^{-},x_{1}^{-},\cdots ,x_{t}^{-}\},\text{ }i=0,1,\cdots ,\ell .
\end{equation*}%
Since each edge in $p$ contributes at most $1$ to the length of $p^{-}$ (and 
$0$ if the parents are identical), we clearly have $\ell \leq t.$ This
construction implies the following expansive property%
\begin{equation}
d_{h}(u,v)\geq d_{h}(u^{-},v^{-})  \label{expansive}
\end{equation}%
for any vertices $u$ and $v$ at the same level.

\begin{lemma}
\label{le-M}Let $u,v\in \Sigma _{n}$ be distinct vertices for some $n\in 
\mathbb{N}$ and set $i=\varphi (u),j=\varphi (v).$ Then $%
d_{h}(u,v)=d_{h}(u^{-},v^{-})$ if and only if there exists a shortest
horizontal path $p$ from $u^{-}$ to $v^{-}$ such that $M(p)_{i,j}>0$.
\end{lemma}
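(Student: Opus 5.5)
Both directions rest on one observation: a horizontal path at level $n$ in which consecutive vertices always have different parents is exactly a lift of its projected walk by $J$-type edges. The path matrix records which lifts exist. Concretely, for a horizontal walk $p=(y_0,\dots,y_\ell)$ at level $n-1$, I would show that $M(p)_{i,j}>0$ if and only if there exist letters $i=\sigma_0,\sigma_1,\dots,\sigma_\ell=j$ such that $(y_{q-1}\sigma_{q-1},y_q\sigma_q)$ is a $J$-type horizontal edge for every $q$.

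This follows by expanding the product $K_{d_1}\cdots K_{d_\ell}$. A positive entry corresponds to a sequence $\sigma_0,\dots,\sigma_\ell$ with $(K_{d_q})_{\sigma_{q-1},\sigma_q}=1$ for each $q$. Since $d_q$ records the orientation of $(y_{q-1},y_q)$ in $\vec E_h$, rule (ii) shows that $y_{q-1}\sigma_{q-1}$ and $y_q\sigma_q$ are joined by a $J$-type edge in that same orientation. The converse is the identity (\ref{K_ij=1}) applied edge by edge.

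For the direction ($\Leftarrow$), let $p$ be a shortest horizontal path from $u^-$ to $v^-$ with $M(p)_{i,j}>0$. The observation produces a horizontal path from $u=u^-i$ to $v=v^-j$ of length $\ell=d_h(u^-,v^-)$. Hence $d_h(u,v)\le d_h(u^-,v^-)$, and (\ref{expansive}) gives equality.

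For the direction ($\Rightarrow$), let $P=(x_0,\dots,x_t)$ be a shortest horizontal path from $u$ to $v$ with $t=d_h(u^-,v^-)$. I will show that no two consecutive vertices of $P$ share a parent. If some did, the projected walk $P^-$ would have length $\ell<t$. Then $d_h(u^-,v^-)\le \ell<t$, which is a contradiction. (If $u^-=v^-$ then $t=0$, forcing $u=v$, which is excluded.) So every edge of $P$ is of $J$-type, and the parent sequence $(x_0^-,\dots,x_t^-)$ is itself a horizontal path $p$ from $u^-$ to $v^-$ of length $t$, hence a shortest one. Its edges are genuine edges of $\vec E_h$, with orientation matching that of $P$.

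The main point needing care is exactly this orientation match. A $J$-type edge $(x_{q-1},x_q)\in\vec E_h$ arises from $(x_{q-1}^-,x_q^-)\in\vec E_h$ together with $(\varphi(x_{q-1}),\varphi(x_q)')\in E(\vec J)$. So the direction sequence of $p$ agrees with that of $P$, provided each edge has a unique orientation type. If both orientations of a parent edge were present, I would note that either choice of $d_q$ for $p$ can be matched, since we may pick $p$'s direction to coincide with the orientation used by $P$. With the orientations matched, the letters $\sigma_q=\varphi(x_q)$ witness $M(p)_{i,j}\ge\prod_q (K_{d_q})_{\sigma_{q-1},\sigma_q}=1$.
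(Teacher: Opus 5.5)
Your proposal is correct and follows essentially the same route as the paper. For ($\Leftarrow$) you lift a shortest parent path letter by letter through $J$-type edges and close with $d_h(u,v)\ge d_h(u^-,v^-)$; for ($\Rightarrow$) you show that the projected walk of a shortest $u$--$v$ path cannot be shorter, so every edge is $J$-type, the parent sequence is a shortest path, and the last letters witness $M(p)_{i,j}>0$. Your explicit treatment of the degenerate case $u^-=v^-$ and of the orientation match between child and parent edges (which the paper uses only implicitly) is a small but genuine improvement in care.
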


\begin{proof}
Suppose first that $d_{h}(u,v)=d_{h}(u^{-},v^{-})=\ell .$ Let $%
p_{1}=(x_{0},x_{1},\cdots ,x_{\ell })$ be a shortest horizontal path from $%
u=x_{0}$ to $v=x_{\ell }$, and set $L(p_{1}^{-})=r,$ then $r\leq \ell .$ On
the other hand, $p_{1}^{-}$ is a horizontal walk joining $u^{-}$ and $v^{-},$
so $d_{h}(u^{-},v^{-})\leq r.$ Together with $d_{h}(u^{-},v^{-})=\ell $ this
yields $\ell \leq r\leq \ell ,$ and hence $r=\ell .$ Therefore, $%
x_{k}^{-}\neq x_{k+1}^{-}$ for all $0\leq k\leq \ell -1.$ By the definition
of horizontal edges, each $(x_{k},x_{k+1})$ is of $J$-type. Since no
consecutive parents were deleted, we obtain that 
\begin{equation*}
p:=p_{1}^{-}=(x_{0}^{-},x_{1}^{-},\cdots ,x_{\ell }^{-})
\end{equation*}%
and $p$ is a shortest horizontal path from $u^{-}$ to $v^{-}$. Let $%
\mathfrak{d}(p)=d_{1}d_{2}\cdots d_{\ell }$ be its direction sequence. Then
by the definition of the path matrix and (\ref{K_ij=1}), we have 
\begin{eqnarray*}
M(p)_{i,j} &=&(K_{d_{1}}K_{d_{2}}\cdots K_{d_{\ell }})_{i,j} \\
&\geq &(K_{d_{1}})_{\varphi (x_{0}),\varphi (x_{1})}(K_{d_{2}})_{\varphi
(x_{1}),\varphi (x_{2})}\cdots (K_{d_{\ell }})_{\varphi (x_{\ell
-1}),\varphi (x_{\ell })}=1>0.
\end{eqnarray*}

Conversely, suppose that $p=(z_{0},z_{1},\ldots ,z_{\ell })$ is a shortest
horizontal path from $u^{-}=z_{0}$ to $v^{-}=z_{\ell }$ and $M(p)_{i,j}>0$.
Let $\mathfrak{d}(p)=d_{1}d_{2}\cdots d_{\ell }$. Then there exist letters $%
c_{0},c_{1},\dots ,c_{\ell }\in \Sigma $ with $c_{0}=i$ and $c_{\ell }=j$
such that 
\begin{equation*}
(K_{d_{1}})_{c_{0},c_{1}}(K_{d_{2}})_{c_{1},c_{2}}\cdots (K_{d_{\ell
}})_{c_{\ell -1},c_{\ell }}=1.
\end{equation*}%
By the definition of $K_{0}=K$ and $K_{1}=K^{\mathrm{T}}$, each factor being 
$1$ means that at step $q$ there is a $J$-type horizontal edge in $\mathcal{U%
}$ between vertices $z_{q}c_{q},z_{q+1}c_{q+1}$, taken in the direction
prescribed by $d_{q}$. Therefore, for the path 
\begin{equation*}
(z_{0}c_{0},z_{1}c_{1},\dots ,z_{\ell }c_{\ell }),
\end{equation*}%
we obtain a horizontal path from $z_{0}c_{0}=u^{-}i=u$ to $z_{\ell }c_{\ell
}=v^{-}j=v$ of length $\ell $. Thus 
\begin{equation*}
d_{h}(u,v)\leq \ell =L(p)=d_{h}(u^{-},v^{-}),
\end{equation*}%
which together with (\ref{expansive}) implies that%
\begin{equation*}
d_{h}(u,v)=d_{h}(u^{-},v^{-}).
\end{equation*}%
This proves the converse implication and completes the proof.
\end{proof}

\begin{lemma}
\label{le-dhit}Let $x,y\in \Sigma _{n}$ be distinct vertices for some $n\in 
\mathbb{N}$ and $i,j\in \Sigma .$ If $d_{h}(xi,yj)=d_{h}(x,y),$ then 
\begin{equation}
d_{h}(xi^{t},yj^{t})=d_{h}(x,y)  \label{dhit}
\end{equation}
for all $t\geq 1,$ where $\sigma ^{t}$ denotes the concatenation of $t$
copies of the letter $\sigma $.
\end{lemma}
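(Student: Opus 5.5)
Induction on $t$ is the natural route. The case $t=1$ is the hypothesis. For the step, assume $d_h(xi^{t},yj^{t})=d_h(x,y)=:\ell$ and set $u=xi^{t+1}$, $v=yj^{t+1}$. Then $u^-=xi^t$, $v^-=yj^t$, $\varphi(u)=i$ and $\varphi(v)=j$. By Lemma \ref{le-M}, it is enough to exhibit a shortest horizontal path $p$ from $xi^t$ to $yj^t$ with $M(p)_{i,j}>0$. Then $d_h(u,v)=d_h(xi^t,yj^t)=\ell$, which closes the induction.

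\textbf{Step 1: a path for the base case.} From $d_h(xi,yj)=d_h(x,y)$, Lemma \ref{le-M} gives a shortest horizontal path $p_0=(z_0,\dots,z_\ell)$ from $x$ to $y$ with direction sequence $\mathfrak d(p_0)=d_1\cdots d_\ell$ and $M(p_0)_{i,j}>0$. Expanding the product, there are letters $c_0=i,c_1,\dots,c_\ell=j$ with $(K_{d_q})_{c_{q-1},c_q}=1$ for every $q$.

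\textbf{Step 2: lifting the path.} I will show by induction on $s$ that
\[
p_s=(z_0c_0^{s},z_1c_1^{s},\dots,z_\ell c_\ell^{s})
\]
is a horizontal path at level $n+s$ with the same direction sequence $d_1\cdots d_\ell$.
\begin{itemize}
\item At $s=0$ this is $p_0$.
\item If consecutive vertices $z_{q-1}c_{q-1}^{s}$ and $z_qc_q^{s}$ are joined by an edge in direction $d_q$, rule (ii) applies to their children ending in $c_{q-1}$ and $c_q$. The parents are horizontally adjacent in the right orientation, and $(K_{d_q})_{c_{q-1},c_q}=1$ means the required edge of $\overrightarrow J$ exists, oriented according to $d_q$.
\item Hence $z_{q-1}c_{q-1}^{s+1}$ and $z_qc_q^{s+1}$ are joined by a $J$-type edge in direction $d_q$.
\end{itemize}
Since $c_0=i$ and $c_\ell=j$, the path $p_t$ runs from $xi^t$ to $yj^t$, has length $\ell$, and satisfies $M(p_t)=M(p_0)$.

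\textbf{Step 3: conclusion.} By (\ref{expansive}) and the inductive hypothesis, $d_h(xi^t,yj^t)\ge d_h(x,y)=\ell$. Therefore $p_t$ is a shortest horizontal path from $xi^t$ to $yj^t$. Because $M(p_t)_{i,j}=M(p_0)_{i,j}>0$, Lemma \ref{le-M} yields $d_h(xi^{t+1},yj^{t+1})=d_h(xi^t,yj^t)=\ell$.

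In fact Step 2 gives every $t$ at once: $d_h(xi^t,yj^t)\le\ell$ from the length of $p_t$, and $d_h(xi^t,yj^t)\ge\ell$ by iterating (\ref{expansive}). So the outer induction can be dropped.

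The main obstacle is the edge bookkeeping in Step 2. The orientation $d_q$ of each parent edge must match the orientation required by rule (ii), using $K_1=K^{\mathrm T}$ when $d_q=1$. The consecutive vertices of $p_s$ must also be distinct. This holds because their parents $z_{q-1}c_{q-1}^{s-1}$ and $z_qc_q^{s-1}$ are distinct (they are joined by an edge), so they have different prefixes.
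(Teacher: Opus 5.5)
Your proof is correct and follows essentially the paper's route: extract a witness path from Lemma \ref{le-M}, lift it one level at a time by appending the witness letters $c_0=i,\dots,c_\ell=j$ (as in the converse direction of Lemma \ref{le-M}), which preserves the direction sequence and hence the path matrix, and get the lower bound from (\ref{expansive}). The one difference is that you reuse a single witness sequence at every level, so you need Lemma \ref{le-M} only once and the outer induction can indeed be dropped. The paper instead re-invokes Lemma \ref{le-M} in both directions at each inductive step, so your version is slightly cleaner.
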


\begin{proof}
We argue by induction on $t$. The case $t=1$ is exactly the assumption.

Assume that (\ref{dhit}) holds for all $1\leq s\leq t$. Set 
\begin{equation*}
X_{s}:=xi^{s},\text{ }Y_{s}:=yj^{s}\text{ }(s\geq 1),
\end{equation*}%
then 
\begin{equation*}
d_{h}(X_{t},Y_{t})=d_{h}(X_{t-1},Y_{t-1}).
\end{equation*}%
Applying Lemma\ \ref{le-M} to the pair $(X_{t},Y_{t})$ (at level $n+t$) with
last letters $i=\varphi (X_{t})$ and $j=\varphi (Y_{t})$, we obtain a
shortest horizontal path 
\begin{equation*}
p_{t-1}=(z_{0},z_{1},\dots ,z_{\ell })
\end{equation*}%
from $X_{t-1}=z_{0}$ to $Y_{t-1}=z_{\ell }$ at level $n+t-1$ such that $%
M(p_{t-1})_{i,j}>0$ and $\ell =d_{h}(X_{t},Y_{t})=d_{h}(x,y)$. Furthermore,
as in the proof of Lemma \ref{le-M} (the "$\Leftarrow $" direction), we can
left one level up to obtain a shortest horizontal path $p_{t}$ whose
direction sequence coincides with that of $p_{t-1}.$ In particular, the
corresponding path matrix satisfies 
\begin{equation*}
M(p_{t})_{i,j}=M(p_{t-1})_{i,j}>0,
\end{equation*}%
and hence using Lemma \ref{le-M} again we obtain 
\begin{equation*}
d_{h}(X_{t+1},Y_{t+1})=d_{h}(X_{t},Y_{t})=d_{h}(x,y).
\end{equation*}%
Thus, by induction, the equality (\ref{dhit}) holds for all $t\geq 1$.
\end{proof}

We now state the main characterization of hyperbolicity for substitution
graphs.

\begin{theorem}
\label{th-equ}The substitution graph $\mathcal{U}$ is hyperbolic if and only
if there exists a positive integer $k$ such that for any shortest horizontal
path $p$ with length greater than $k,$ the associated path matrix satisfies $%
M(p)=\boldsymbol{0}$.
\end{theorem}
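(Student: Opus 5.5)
We reduce the statement to Theorem \ref{th-Kong}: $\mathcal{U}$ is an augmented tree, hence expansive. So hyperbolicity of $\mathcal{U}$ is equivalent to a uniform bound $L$ on the lengths of horizontal geodesics, and also to $\mathcal{U}$ being $(m,k)$-departing. The bridge between these conditions and path matrices is Lemma \ref{le-M}. It says that for a pair $(u,v)$ with parents $(x,y)$, the equality $d_h(u,v)=d_h(x,y)$ holds exactly when some shortest horizontal path $p$ from $x$ to $y$ has $M(p)_{\varphi(u),\varphi(v)}>0$. Hence $M(p)\neq\boldsymbol{0}$ for some shortest horizontal path $p$ from $x$ to $y$ if and only if some pair of children $xi,yj$ stays at horizontal distance $d_h(x,y)$.

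\emph{Necessity.} Suppose $\mathcal{U}$ is hyperbolic, and let $L$ bound all horizontal geodesics. Take $k$ larger than a suitable function of $L$, say $k=2L+2$ or larger. Suppose, for contradiction, that there is a shortest horizontal path $p$ from $x$ to $y$ of length $\ell>k$ with $M(p)_{i,j}>0$. By Lemma \ref{le-M}, $d_h(xi,yj)=\ell$. By Lemma \ref{le-dhit}, $d_h(xi^t,yj^t)=\ell$ for all $t\ge1$. On the other hand, $d(xi^t,yj^t)\le 2t'+d_h(\text{ancestors at level }n+t-t')$ for every $t'$, so the actual graph distance stays bounded by something like $2t+\ell$. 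What we need is a contradiction with hyperbolicity. I would use the $(m,k)$-departing form: $d_h(x,y)=\ell>k$ implies $d_h(u,v)>2k$ for all $u\in J_m(x)$, $v\in J_m(y)$, with $k,m$ given by Theorem \ref{th-Kong}(iii). Taking $u=xi^m$, $v=yj^m$ would give $\ell>2k$, which is not yet a contradiction. So iterate: apply departing from level $n$ with $d_h>k$. Then $d_h(xi^m,yj^m)=\ell$, and we need $\ell>2k$. The chain $\ell>2k$, then applying again gives $\ell>4k$, and so on, is not immediate, because departing only promotes the threshold $k$ to $2k$ once. A cleaner route is condition (ii). Since every horizontal geodesic has length at most $L$, for $\ell>L$ the pair $xi^t,yj^t$ is not joined by a horizontal geodesic, yet $d_h=\ell$ at every depth. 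I would show instead that a horizontal path realizing $d_h(xi^t,yj^t)=\ell$ can be shortened through vertical detours only by a bounded amount. More directly, I would pick the midpoint structure: among all pairs $u,v$ at the same level with $d_h(u,v)\le \ell$, choose one where $d(u,v)=d_h(u,v)$. The standard fact for expansive graphs (Kong--Lau--Wang) is that a geodesic between same-level vertices can be taken to consist of an up-segment, a horizontal geodesic, and a down-segment. Applying this to $xi^t,yj^t$ with $t$ large, the horizontal part sits at level $n+t-s$ and joins the ancestors $xi^{t-s},yj^{t-s}$, which are at horizontal distance $\ell$ by Lemma \ref{le-dhit}. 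This gives a horizontal geodesic of length $\ell$ when $s<t$. If $s\ge t$, the path climbs above level $n$, and its length is at least $2t$. Since $d\le \ell$, this forces $t\le\ell/2$. Therefore, for $t>\ell/2$ we obtain a horizontal geodesic of length $\ell>k\ge L$, a contradiction.

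\emph{Sufficiency.} Assume the vanishing condition holds with constant $k$. I would verify Theorem \ref{th-Kong}(ii) via the same geodesic decomposition. Let $q$ be a horizontal geodesic from $u$ to $v$ of length $\ell>k+$const, and set $x=u^-$, $y=v^-$. By \eqref{expansive}, $d_h(x,y)\le\ell$. If $d_h(x,y)=\ell$, then Lemma \ref{le-M} produces a shortest horizontal path $p$ from $x$ to $y$ of length $\ell>k$ with $M(p)\neq\boldsymbol{0}$, contradicting the hypothesis. Hence $d_h(x,y)\le\ell-1$. Then going up one level, across, and down gives $d(u,v)\le 2+\ell-1=\ell+1$, which is not yet a contradiction. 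Iterate: the same argument at each level shows $d_h$ drops by at least one per level while it exceeds $k$. After two levels, $d_h(u^{[-2]},v^{[-2]})\le\ell-2$, so going up two, across, and down two gives $d(u,v)\le 4+\ell-2=\ell+2$. This is no better, which is the main obstacle. So I need a drop of more than $2$ per $s$ levels. Here I would use that the drop is multiplicative: a shortest horizontal path of length $\ell$ at level $n+1$ projects to a walk of length $\ell$ with at least $\lfloor \ell/(k+1)\rfloor$ $G$-type steps. The reason is that any $k+1$ consecutive $J$-type steps project to a shortest path, which would have a nonvanishing matrix, and each $G$-type step shortens the projection by one. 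Consequently $d_h(x,y)\le \ell(1-1/(k+1))$. Then two levels up cost $4$ but save about $2\ell/(k+1)$, which exceeds $4$ once $\ell>2(k+1)$, contradicting that $q$ is a geodesic. This yields the bound $L\approx 2(k+2)$ and hence hyperbolicity. The delicate point, which I expect to be the main technical step, is justifying that every long enough run of consecutive $J$-type edges in a shortest path projects to a shortest path, so that the hypothesis applies to it.
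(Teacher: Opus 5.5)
\paragraph{Necessity.} Your argument here is correct, but it takes a detour through condition (ii) and the up--across--down form of geodesics in expansive graphs. The departing route you abandoned works directly. Suppose $M(p)\neq\boldsymbol{0}$ for some shortest $p$ with $L(p)>k$. Every contiguous subproduct of $M(p)$ is then nonzero, so a subpath of length exactly $k+1$ has nonzero matrix, and it is still a shortest horizontal path. Lemmas \ref{le-M} and \ref{le-dhit} then give $d_h(xi^m,yj^m)=k+1\le 2k$, which contradicts $(m,k)$-departing. This is the paper's argument, and it avoids the extra structural fact about geodesics that you import.

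\paragraph{Sufficiency: the gap.} There is a genuine gap exactly at the step you flag. You need that a shortest horizontal path contains no $k+1$ consecutive $J$-type edges, and you plan to get this by showing such a run projects to a shortest horizontal path. That is not a structural property of $J$-runs you can prove directly. For example, suppose $(A,B)\in\vec{E}_h$ and $(c_0,c_1'),(c_2,c_1')\in E(\vec{J})$ with $c_0,c_2$ distinct and non-adjacent in $G$. Then $(Ac_0,Bc_1,Ac_2)$ is a shortest horizontal path made of two $J$-type edges, but its projection $(A,B,A)$ is not shortest. As written, your sufficiency proof is therefore incomplete.

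\paragraph{How to close it.} The fix bypasses projections altogether. The run $(x_r,\dots,x_{r+k+1})$ is itself a shortest horizontal path of length $k+1>k$. By \eqref{K_ij=1}, its path matrix has a positive $(\varphi(x_r),\varphi(x_{r+k+1}))$ entry, which contradicts the hypothesis directly. With this in hand, one level already suffices: $d(u,v)\le 2+\ell-\lfloor \ell/(k+1)\rfloor<\ell$ once $\ell\ge 3(k+1)$, so horizontal geodesics are bounded.

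Simpler still, and this is the paper's proof: the drop by at least one per level while $d_h>k$, which you proved and then dismissed, is precisely $(k,k)$-departing. If $d_h(x,y)>k$, descending $k$ levels raises $d_h$ above $2k$. Theorem \ref{th-Kong}(iii) then gives hyperbolicity with no need for condition (ii).
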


\begin{proof}
If $\mathcal{U}\ $is hyperbolic, then by Theorem \ref{th-Kong}, it is $(m,k)$%
-departing for some positive integers $m,k$. Let $p$ be a shortest
horizontal path from $x$ to $y$ with length $k+1.$ Assume, for the sake of
contradiction, that there exists some entry $(i,j)$ such that $M(p)_{i,j}>0.$
Then Lemma \ref{le-M} implies $d_{h}(xi,yj)=d_{h}(x,y)$, and consequently
Lemma \ref{le-dhit} yields $d_{h}(xi^{t},yj^{t})=d_{h}(x,y)\leq 2k$ for all $%
t\geq 1$. However, this contradicts the $(m,k)$-departing property, and
hence the path matrix of any shortest horizontal path of length greater than 
$k$ is the zero matrix.

On the other hand, if there exists a positive integer $k$ such that $M(p)=%
\boldsymbol{0}$\ for all shortest horizontal paths $p$ with $L(p)>k$. We
will show that $\mathcal{U}$ is $(k,k)$-departing.

Let $p$ be a shortest horizontal path from $x$ to $y$ and $L(p)>k$. Since $%
M(p)=\boldsymbol{0}$, by Lemma \ref{le-M} and the expansive property (\ref%
{expansive}) we have 
\begin{equation*}
d_{h}(xi_{1},yj_{1})\geq d_{h}(x,y)+1>k+1\text{ for all }i_{1},j_{1}\in
\Sigma .
\end{equation*}%
Thus, for every pair $i_{1},j_{1}$, any shortest horizontal path $p_{1}$
connecting $xi_{1}$ and $yj_{1}$ has length greater than $k$, so by the
hypothesis $M(p_{1})=\boldsymbol{0}$. Repeating this argument $k$ times, we
conclude that%
\begin{equation*}
d_{h}(xi_{1}i_{2}\cdots i_{k},yj_{1}j_{2}\cdots j_{k})>k+k=2k\text{ for all }%
i_{1}i_{2}\cdots i_{k},j_{1}j_{2}\cdots j_{k}\in \Sigma ^{k}.
\end{equation*}%
Therefore, $\mathcal{U}$ is $(k,k)$-departing, and hence hyperbolic.
\end{proof}

Based on this characterization, we now derive a combinatorial criterion for
the vanishing of path matrices, which offers a practical method to verify
the hyperbolicity condition in Theorem \ref{th-equ}.

Let $p$ be a horizontal path in $\mathcal{U},$ then $M(p)\in \left\langle
K,K^{\mathrm{T}}\right\rangle .$ Recall the notations $N_{0}$ and $N_{1}$
defined for matrices. For simplicity, we extend these to paths by setting%
\begin{equation*}
N_{0}(p):=N_{0}(M(p))\text{ and}\,N_{1}(p):=N_{1}(M(p)).
\end{equation*}

\begin{proposition}
\label{prop-Sp}Suppose $K$ is a nilpotent matrix with nilpotency index $I$
and its associated poset $(P,\leq )$ is graded. Let $p$ be a horizontal path
in $\mathcal{U}.$ Then $M(p)=\boldsymbol{0}$ if and only if there exists a
subpath $p^{\prime }$ of $p$ such that $\left\vert N_{0}(p^{\prime
})-N_{1}(p^{\prime })\right\vert \geq I.$
\end{proposition}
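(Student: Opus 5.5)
This proposition is essentially a translation of Lemma \ref{le-SN} into the language of horizontal paths, so the plan is to reduce to it directly. Let $p=(u_0,u_1,\dots,u_\ell)$ be a horizontal path with direction sequence $\mathfrak{d}(p)=d_1d_2\cdots d_\ell$. By definition $M(p)=K_{d_1}K_{d_2}\cdots K_{d_\ell}$ is an element of $\langle K,K^{\mathrm{T}}\rangle$ written as a product with factors $A_q:=K_{d_q}\in\{K,K^{\mathrm{T}}\}$. The hypotheses of Lemma \ref{le-SN} are exactly the standing hypotheses here: $K$ is a nilpotent $0$-$1$ matrix with index $I$, and its poset is graded. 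Hence $M(p)=\boldsymbol{0}$ if and only if some contiguous subproduct $A_a\cdots A_b$ satisfies $|N_0-N_1|\geq I$.

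The remaining step is to identify contiguous subproducts with subpaths. For $0\le a-1<b\le \ell$, consider the subpath $p'=(u_{a-1},u_a,\dots,u_b)$. Its direction sequence is $d_a\cdots d_b$, since direction is an edge-local property. Therefore $M(p')=K_{d_a}\cdots K_{d_b}$ is precisely the subproduct $A_a\cdots A_b$. Conversely, every contiguous block of factors arises from such a subpath. By the convention $N_s(p'):=N_s(M(p'))$, where $N_0,N_1$ count the occurrences of $K$ and $K^{\mathrm{T}}$ as formal factors, the counts agree. The equivalence follows immediately.

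There is one bookkeeping point I expect to be the only real obstacle. Lemma \ref{le-SN} is stated with $p<q$, so it concerns subproducts of at least two factors, while a subpath may be a single edge. This causes no trouble. A single factor has $|N_0-N_1|=1$, and if $I=1$ then $K=\boldsymbol{0}$, in which case both sides hold trivially for any path of length $\geq 1$. For $I\geq 2$, the condition $|N_0-N_1|\geq I$ forces the block to have length at least $2$ anyway.

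A second point is that $N_0,N_1$ must be read as counts of factors in the formal word, not as functions of the matrix value $M(p')$. I would state this reading explicitly, following the extension of notation made just before the proposition, so that the transfer is unambiguous. No further argument is needed.
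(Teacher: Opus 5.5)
Your proposal is correct and follows the paper's argument: the paper's proof simply notes that $M(p)\in\langle K,K^{\mathrm{T}}\rangle$ and invokes Lemma~\ref{le-SN}. Two steps the paper leaves implicit are handled correctly in your version: matching subpaths with contiguous subproducts, and observing that the lemma's requirement of at least two factors matters only when $I=1$ (a case already excluded by the standing assumption $E(J)\neq\emptyset$).
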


\begin{proof}
Let 
\begin{equation*}
\mathcal{M}=\{M(p):p\text{ is a shortest horizontal path in }\mathcal{U}\}.
\end{equation*}%
Since $\mathcal{M}\subset \left\langle K,K^{\mathrm{T}}\right\rangle $,
applying Lemma \ref{le-SN} completes the proof.
\end{proof}

Combining Theorem \ref{th-equ} and Proposition \ref{prop-Sp}, we obtain the
following.

\begin{corollary}
Suppose $K$ is a nilpotent matrix with nilpotency index $I$ and its
associated poset $(P,\leq )$ is graded. Then $\mathcal{U}$ is hyperbolic if
and only if there exists a positive integer $k$ such that every shortest
horizontal path $p$ of length greater than $k$ contains a subpath $p^{\prime
} $ satisfying $\left\vert N_{0}(p^{\prime })-N_{1}(p^{\prime })\right\vert
\geq I$.
\end{corollary}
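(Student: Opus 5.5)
The corollary is a direct splice of Theorem \ref{th-equ} with Proposition \ref{prop-Sp}, so the plan is to rewrite the hyperbolicity criterion of Theorem \ref{th-equ} term by term. Theorem \ref{th-equ} says that $\mathcal{U}$ is hyperbolic if and only if there is a positive integer $k$ such that $M(p)=\boldsymbol{0}$ for every shortest horizontal path $p$ with $L(p)>k$. For a fixed path $p$, Proposition \ref{prop-Sp} (with $K$ nilpotent of index $I$ and a graded poset) says that $M(p)=\boldsymbol{0}$ exactly when $p$ contains a subpath $p'$ with $\left\vert N_{0}(p')-N_{1}(p')\right\vert\geq I$. Substituting this pointwise equivalence into the quantified statement, with the same $k$, gives the corollary in both directions.

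The steps, in order, are as follows.

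\emph{Forward direction.} Assume $\mathcal{U}$ is hyperbolic. Take the $k$ supplied by Theorem \ref{th-equ}, and let $p$ be any shortest horizontal path with $L(p)>k$. Then $M(p)=\boldsymbol{0}$, and the "only if" part of Proposition \ref{prop-Sp} produces the required subpath $p'$.

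\emph{Converse.} Assume such a $k$ exists. For each shortest horizontal path $p$ with $L(p)>k$, the "if" part of Proposition \ref{prop-Sp} gives $M(p)=\boldsymbol{0}$. Theorem \ref{th-equ} then yields hyperbolicity.

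The one point needing care is the matching between subpaths and contiguous subproducts. A subpath $p'=(u_a,\dots,u_b)$ of $p$ has path matrix $M(p')=K_{d_{a+1}}\cdots K_{d_b}$, which is exactly a contiguous subproduct of $M(p)$. So $N_0(p')$ and $N_1(p')$ are the counts of $K$ and $K^{\mathrm{T}}$ that appear in Lemma \ref{le-SN}.

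There is also a boundary case. Lemma \ref{le-SN} requires $p<q$, i.e.\ at least two factors, while a subpath may consist of a single edge. This case is harmless. A single factor has $\left\vert N_0-N_1\right\vert=1$. If $I=1$, then $K=\boldsymbol{0}$, every $M(p)$ with $L(p)\geq 1$ vanishes, and any one-edge subpath witnesses the inequality, so both sides hold trivially. If $I\geq 2$, single factors never meet the threshold, and the length-$\geq 2$ restriction loses nothing.

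I expect no real obstacle beyond this bookkeeping: the corollary just composes two already-established equivalences with the same quantifier over $k$.
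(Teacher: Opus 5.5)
Your proposal is correct and follows the paper's own argument: the paper gives no separate proof and presents the corollary as the combination of Theorem~\ref{th-equ} with Proposition~\ref{prop-Sp}, applied to each shortest horizontal path with the same $k$. Your handling of the $p<q$ convention in Lemma~\ref{le-SN} is fine, and the $I=1$ case cannot actually occur here, because the standing assumption $E(J)\neq\emptyset$ forces $K\neq\boldsymbol{0}$ and hence $I\geq 2$.
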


We define $S(M)$ as the length of the longest maximal contiguous block of
equal factors (all $A$'s or all $A^{\mathrm{T}}$'s) in $M\in \left\langle
K,K^{\mathrm{T}}\right\rangle $ and write $S(p):=S(M(p))$. By the definition
of the nilpotent matrix and Theorem \ref{th-equ}, we have the following
corollary.

\begin{corollary}
\label{coro-N0-N1}Suppose $K\ $is a nilpotent matrix with nilpotency index $%
I $. If $S(p)\geq I,$ then $M(p)=\boldsymbol{0}.$ Consequently, If $S(p)\geq
I$ for all shortest horizontal paths $p$ of length greater than $k,$ then $%
\mathcal{U}$ is hyperbolic.
\end{corollary}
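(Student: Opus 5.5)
The plan has two steps: first show $M(p)=\boldsymbol{0}$ directly from a long block of equal factors, then feed that into Theorem \ref{th-equ}.

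\textbf{Step 1: a long block kills the product.} If $S(p)\geq I$, then the word $M(p)=K_{d_{1}}\cdots K_{d_{\ell}}$ contains a contiguous block $K_{d_{q}}K_{d_{q+1}}\cdots K_{d_{q+s-1}}$ with $s\geq I$ and all factors equal. That block is therefore either $K^{s}$ or $(K^{\mathrm{T}})^{s}=(K^{s})^{\mathrm{T}}$. Since $K$ has nilpotency index $I$ and $s\geq I$, we have $K^{s}=\boldsymbol{0}$, and hence also $(K^{\mathrm{T}})^{s}=\boldsymbol{0}$. Write $M(p)=BZC$, where $B$ is the product of the factors before the block, $Z$ is the block, and $C$ is the product after it (with $B$ or $C$ the identity if empty). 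Then $M(p)=B\,\boldsymbol{0}\,C=\boldsymbol{0}$. No gradedness assumption is needed here, in contrast to Proposition \ref{prop-Sp}, because we only use that a power of $K$ vanishes, not any rank counting.

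\textbf{Step 2: hyperbolicity.} Suppose $S(p)\geq I$ for every shortest horizontal path $p$ with $L(p)>k$. By Step 1, every such $p$ has $M(p)=\boldsymbol{0}$. This is exactly the hypothesis of the sufficiency direction of Theorem \ref{th-equ} with the same integer $k$, so $\mathcal{U}$ is hyperbolic. If $k$ is not assumed positive, we replace it by $\max\{k,1\}$. This is harmless, since the paths of length greater than $\max\{k,1\}$ form a subset of those of length greater than $k$.

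There is no genuine obstacle. The only points needing care are the convention for empty prefix or suffix products, and checking that the transpose of a zero power is zero.
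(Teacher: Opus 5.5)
Your proposal is correct and follows the paper's own one-line justification: a contiguous block of $s\geq I$ equal factors is $K^{s}$ or $(K^{s})^{\mathrm{T}}$, which vanishes by nilpotency, so $M(p)=\boldsymbol{0}$, and the sufficiency direction of Theorem \ref{th-equ} then gives hyperbolicity. You are also right that no gradedness assumption is needed for this corollary.
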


Theorem \ref{th-equ} characterizes hyperbolicity but involves verifying
infinitely many paths. We now deduce finite, verifiable criteria from this
general result in the following two subsections.

\bigskip

\subsection{Obstructions to hyperbolicity}

For each $n\in \mathbb{N},$ let $D_{n}$ denote the horizontal diameter at
level $n$ in $\mathcal{U},$ defined by%
\begin{equation*}
D_{n}=\max \{d_{h}(x,y):x,y\in \Sigma _{n}\}.
\end{equation*}

\begin{proposition}
\label{prop-nil}If $\mathcal{U}$ is hyperbolic, then either $\{D_{n}\}_{n\in 
\mathbb{N}}$ is bounded or $K$ is nilpotent.
\end{proposition}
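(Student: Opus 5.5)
We argue by contraposition: assume $\{D_n\}$ is unbounded and $K$ is not nilpotent, and show that $\mathcal{U}$ is not hyperbolic by violating the criterion of Theorem \ref{th-equ}. Fix any positive integer $k$. We want a shortest horizontal path $p$ with $L(p)>k$ and $M(p)\neq\boldsymbol{0}$.

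The first step is to produce long shortest horizontal paths. Since $D_n$ is unbounded, there is some level $n$ and vertices $x,y\in\Sigma_n$ with $d_h(x,y)=D_n>k$. This requires $d_h(x,y)<\infty$. The levels are horizontally connected, because $E(G)\neq\emptyset$ and the maximum in the definition of $D_n$ is understood to be finite. We will take this as implicit in the definition of $D_n$, and otherwise restrict to pairs at finite horizontal distance. Then any shortest horizontal path $p$ from $x$ to $y$ has length $L(p)=d_h(x,y)>k$.

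The second step uses Lemma \ref{le-A1Ak} with $A=K$. Since $K$ is a non-nilpotent $0$-$1$ matrix, every element of $\langle K,K^{\mathrm{T}}\rangle$ is nonzero. The path matrix $M(p)=K_{d_1}\cdots K_{d_\ell}$ is such an element, with $\ell=L(p)\geq 1$. Hence $M(p)\neq\boldsymbol{0}$, regardless of the direction sequence $\mathfrak{d}(p)$.

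Thus for every $k$ there is a shortest horizontal path of length greater than $k$ with nonvanishing path matrix. By the "only if" direction of Theorem \ref{th-equ}, $\mathcal{U}$ is not hyperbolic, which proves the contrapositive. The only point that needs care is the finiteness and existence of long shortest horizontal paths when $D_n$ is unbounded; everything else is immediate from the two cited results.
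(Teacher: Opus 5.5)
Your proof is correct and is the same as the paper's: unboundedness of $\{D_n\}$ supplies shortest horizontal paths of every length, Lemma~\ref{le-A1Ak} makes all their path matrices nonzero, and the ``only if'' direction of Theorem~\ref{th-equ} then rules out hyperbolicity. One correction to your side remark: $E(G)\neq\emptyset$ does not make the levels horizontally connected (for example, $G$ may have an isolated vertex), so your fallback reading of $D_n$ as a maximum over pairs at finite horizontal distance is genuinely needed, and it is also what the paper's one-line proof tacitly assumes.
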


\begin{proof}
Assume that $\{D_{n}\}_{n\in \mathbb{N}}$ is unbounded and $K$ is not
nilpotent. Since the diameter sequence is unbounded, for any integer $k\geq
1,$ there exists a shortest horizontal path $p$ with length strictly greater
than $k$. By Lemma \ref{le-A1Ak}, the non-nilpotency of $K$ implies that $%
M(p)\neq \boldsymbol{0}$. This contradicts the hyperbolicity condition in
Theorem \ref{th-equ}.
\end{proof}

A shortest horizontal path $p$ in $\mathcal{U}$ with direction sequence $%
\mathfrak{d}(p)=d_{1}d_{2}\cdots d_{\ell }$ is said to be \emph{strictly
alternating} if $d_{i}\neq d_{i+1}$ for all $1\leq i<\ell $. This leads to
the following obstruction.

\begin{proposition}
\label{prop-arbi}If there exist arbitrarily long strictly alternating
arbitrarily long in $\mathcal{U}$, then $\mathcal{U}$ is not hyperbolic.
\end{proposition}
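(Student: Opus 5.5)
The statement is garbled; I read it as: if $\mathcal{U}$ contains strictly alternating shortest horizontal paths of arbitrarily large length, then $\mathcal{U}$ is not hyperbolic. The plan is to apply the necessity direction of Theorem \ref{th-equ}, with Remark \ref{rmk0} supplying the non-vanishing of the path matrices.

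First suppose, for contradiction, that $\mathcal{U}$ is hyperbolic. By Theorem \ref{th-equ} there is a positive integer $k$ such that $M(p)=\boldsymbol{0}$ for every shortest horizontal path $p$ with $L(p)>k$. By hypothesis we may choose a strictly alternating shortest horizontal path $p$ with $L(p)>k$.

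Its direction sequence $\mathfrak{d}(p)=d_{1}\cdots d_{\ell }$ alternates between $0$ and $1$. Hence its path matrix $M(p)=K_{d_{1}}\cdots K_{d_{\ell }}$ is a strictly alternating word in $K$ and $K^{\mathrm{T}}$, and so has one of the forms $(KK^{\mathrm{T}})^{m}$, $(K^{\mathrm{T}}K)^{m}$, $(KK^{\mathrm{T}})^{m}K$ or $(K^{\mathrm{T}}K)^{m}K^{\mathrm{T}}$. We have assumed $E(J)\neq \emptyset $, so $K\neq \boldsymbol{0}$, and Remark \ref{rmk0} gives $M(p)\neq \boldsymbol{0}$. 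This contradicts the choice of $k$, so $\mathcal{U}$ is not hyperbolic.

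The only step that deserves care is Remark \ref{rmk0} itself; I would verify it directly rather than just cite it. For $K\neq \boldsymbol{0}$ the matrix $B=KK^{\mathrm{T}}$ is positive semidefinite and nonzero, because its trace equals the number of ones in $K$. Powers of a nonzero positive semidefinite matrix are nonzero, since it has a positive eigenvalue; thus $B^{m}\neq \boldsymbol{0}$, and likewise $(K^{\mathrm{T}}K)^{m}\neq \boldsymbol{0}$.

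For the odd-length words, note that $(KK^{\mathrm{T}})^{m}K\,K^{\mathrm{T}}=B^{m+1}\neq \boldsymbol{0}$, which forces $(KK^{\mathrm{T}})^{m}K\neq \boldsymbol{0}$; the case $(K^{\mathrm{T}}K)^{m}K^{\mathrm{T}}$ is symmetric. Alternatively, one can argue combinatorially: fix an edge $(a,b')$ of $\overrightarrow{J}$ and use the index sequence $a,b,a,b,\dots$, along which every factor has entry $1$, so that $M(p)_{a,\cdot}\neq 0$.
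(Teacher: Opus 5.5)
Your proof is correct and is essentially the paper's argument: the necessity direction of Theorem \ref{th-equ} combined with the non-vanishing of strictly alternating products from Remark \ref{rmk0}. You drop the paper's superfluous case split on whether $K$ is nilpotent, and you rightly obtain $K\neq\boldsymbol{0}$ from the standing assumption $E(J)\neq\emptyset$ (the paper's ``$K$ nilpotent, hence $K\neq 0$'' is not a valid inference). One small slip in your combinatorial aside: for words beginning with $K^{\mathrm{T}}$ the index sequence must start at $b$ rather than $a$, but your positive-semidefinite verification already covers all four forms.
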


\begin{proof}
Since there exist arbitrarily long shortest horizontal paths, the diameter
sequence $\{D_{n}\}_{n\in \mathbb{N}}$ is unbounded.\newline
If $K$\ is not nilpotent. By Proposition \ref{prop-nil}, the unboundedness
of $\{D_{n}\}$ together with the non-nilpotency of $K$ implies that $%
\mathcal{U}$ is not hyperbolic.\newline
If $K$\ is nilpotent, then $K\neq 0$. For any strictly alternating path $p$,
the matrix $M(p)$ is a strictly alternating product of $K$ and $K^{T}$. By
Remark \ref{rmk0}, such products never vanish when $K\neq 0$. Hence there
exist shortest horizontal paths of arbitrarily large length whose path
matrices are non-zero. Therefore, by Theorem \ref{th-equ}, that $\mathcal{U}$
is not hyperbolic.
\end{proof}

Recall that $\overrightarrow{G}$ is the directed graph on the alphabet $%
\Sigma ,$ and $\overrightarrow{J}$ is the subgraph of $\overrightarrow{K}%
_{N,N}$ (connecting $\Sigma $ to a disjoint copy $\Sigma ^{\prime }$).

\begin{definition}
\label{def-coup}We define the \emph{coupling graph} $\overrightarrow{%
\mathcal{G}}\mathcal{=}\overrightarrow{\mathcal{G}}(\overrightarrow{G},%
\overrightarrow{J})$ as the directed graph with vertex set $V(%
\overrightarrow{\mathcal{G}})=\Sigma \cup \Sigma ^{\prime }$ and edge set 
\begin{equation*}
E(\overrightarrow{\mathcal{G}})=E(\overrightarrow{G})\cup E(\overrightarrow{%
G^{\prime }})\cup E(\overrightarrow{J}),
\end{equation*}%
where $\overrightarrow{G^{\prime }}$ is the isomorphic copy of $%
\overrightarrow{G}$ on $\Sigma ^{\prime }$ (i.e., $(u^{\prime },v^{\prime
})\in E(\overrightarrow{G^{\prime }})$ if and only if $(u,v)\in E(%
\overrightarrow{G})$). We denote by $\mathcal{G}$ the underlying graph of $%
\overrightarrow{\mathcal{G}}$.
\end{definition}

Fig. \ref{fig-coupling} illustrates the coupling graph corresponding to
Example~1, where $\overrightarrow{G}$ is a directed path of length $2$
(i.e., $1\rightarrow 2\rightarrow 3$) and $E(\overrightarrow{J}%
)=\{(2,1^{\prime })\}$.

\begin{figure}[tbph]
\includegraphics[width=0.4\textwidth]{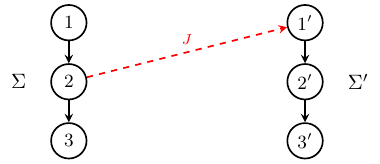}
\caption{Coupling graph for Example~1.}
\label{fig-coupling}
\end{figure}

For each $n\in \mathbb{N},$ let $\overrightarrow{G}_{n}$ denote the subgraph
of the directed substitution graph $\mathcal{D}=(V,\vec{E}_{h})$ induced by
the vertex subset $\Sigma _{n}.$ We write $G_{n}$ for the underlying graph
of $\overrightarrow{G}_{n}$. Then the set of horizontal edges can be
expressed as 
\begin{equation*}
\vec{E}_{h}=\bigcup_{n=1}^{\infty }E(\overrightarrow{G}_{n})\text{ and }%
E_{h}=\bigcup_{n=1}^{\infty }E(G_{n}).
\end{equation*}

\begin{remark}
The recursive construction of the horizontal graphs $G_{n}$ preserves
certain elementary graph--theoretic properties of the base graphs on the
alphabet level. For instance, if the underlying graph $\mathcal{G}$ is
triangle-free, then each horizontal graph $G_{n}$ is triangle-free for all $%
n\in \mathbb{N}$. This can be proved by a straightforward induction on $n$
using the definition of substitution graph.

Motivated by this observation, we would also like to propagate bipartiteness
from the alphabet level to all horizontal layers. For this we fix a
bipartition of $\mathcal{G}$ and impose a simple compatibility requirement
between this bipartition and the substitution rules, which we formulate as
the notion of a \emph{shift-compatible} bipartition below. Under this
condition, we will prove in Lemma~\ref{le-bipart} that each $G_{n}$ is
bipartite.
\end{remark}

\begin{definition}
Suppose $\mathcal{G}$ is bipartite. Let $\Lambda $ be a undirected graph
defined by $V(\Lambda )=\Sigma $ and%
\begin{equation}
(i,j)\in E(\Lambda )\iff \text{either }(i,j)\in E(G)\text{ or }(i,j^{\prime
})\in E(J).  \label{Lambda}
\end{equation}%
We say that a proper $2$-coloring $\chi $ of $\mathcal{G}$ is called \emph{%
shift-compatible} if the parity shift 
\begin{equation*}
\tau (i):=\chi (i^{\prime })-\chi (i)\pmod2\text{ }(i\in \Sigma )
\end{equation*}%
is constant on each connected component of $\Lambda $. We call $\mathcal{G}$
a \emph{shift-compatible bipartite graph }if it admits such a $2$-coloring $%
\chi $.
\end{definition}

\begin{remark}
\label{rmk-equi}The above definition admits a convenient local
reformulation. Suppose $\mathcal{G}$ is bipartite. A proper $2$-coloring $%
\chi $ of $\mathcal{G}$ is shift-compatible if and only if 
\begin{equation}
\tau (i)=\tau (j)\text{ for every }(i,j)\in E(\Lambda ).  \label{shift-c}
\end{equation}%
In other words, $\tau $ is constant on each edge of $\Lambda $. Indeed, if $%
\tau $ is constant on each connected component of $\Lambda $, then in
particular $\tau (i)=\tau (j)$ for every edge $\{i,j\}\in E(\Lambda )$, so
the local condition holds. Conversely, if $\tau $ is constant on every edge
of $\Lambda $, then for any two vertices $i,j$ in the same connected
component of $\Lambda $ we can choose a path $i=i_{0},i_{1},\dots ,i_{k}=j$
in $\Lambda $ and apply the edge condition successively to obtain $\tau
(i_{0})=\tau (i_{1})=\cdots =\tau (i_{k})$. Thus $\tau $ is constant on that
component, and the global definition and the local characterization are
equivalent.
\end{remark}

\begin{lemma}
\label{le-bipart}If $\mathcal{G}$ is a shift-compatible bipartite graph,
then $G_{n}$ is bipartite for each $n\in \mathbb{N}$.
\end{lemma}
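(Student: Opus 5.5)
The plan is to build an explicit proper $2$-coloring $c_n:\Sigma_n\to\{0,1\}$ of $G_n$ by induction on $n$, using the shift-compatible coloring $\chi$ of $\mathcal{G}$ and its parity shift $\tau$. The guiding idea is that a child of a vertex colored $0$ should inherit the coloring $\chi$ of $\Sigma$. A child of a vertex colored $1$ should inherit the coloring $\chi$ of the copy $\Sigma'$. Concretely, we set $c_0(\varnothing)=0$ and, for $u\in\Sigma_{n-1}$ and $\sigma\in\Sigma$,
\begin{equation*}
c_n(u\sigma):=\chi(\sigma)+\tau(\sigma)\,c_{n-1}(u)\pmod 2 .
\end{equation*}
Note that $c_n(u\sigma)=\chi(\sigma)$ if $c_{n-1}(u)=0$ and $c_n(u\sigma)=\chi(\sigma')$ if $c_{n-1}(u)=1$. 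We then prove by induction on $n$ that every horizontal edge at level $n$ joins vertices of different $c_n$-colors; this shows each $G_n$ is bipartite. For $n=1$ all edges are of $G$-type and the check below applies.

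The edges at level $n$ split into the two types of the Notation, and we check each separately. Throughout we use Remark \ref{rmk-equi}, which says $\tau(i)=\tau(j)$ whenever $(i,j)\in E(\Lambda)$.

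\emph{$G$-type edge} $(u\sigma,u\tau_0)$ with $(\sigma,\tau_0)\in E(\overrightarrow{G})$. Here $\chi(\sigma)\neq\chi(\tau_0)$ because $\chi$ is proper on $\mathcal{G}$. Also $\tau(\sigma)=\tau(\tau_0)$ because $(\sigma,\tau_0)\in E(\Lambda)$. Hence
\begin{equation*}
c_n(u\sigma)-c_n(u\tau_0)=\chi(\sigma)-\chi(\tau_0)\equiv 1 \pmod 2 .
\end{equation*}

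\emph{$J$-type edge} $(u\sigma,v\tau_0)$ with $(u,v)\in\overrightarrow{E}_h$ and $(\sigma,\tau_0')\in E(\overrightarrow{J})$. By the induction hypothesis $c_{n-1}(u)\neq c_{n-1}(v)$. Properness of $\chi$ on the $J$-edge gives $\chi(\sigma)\neq\chi(\tau_0')=\chi(\tau_0)+\tau(\tau_0)$. Moreover $(\sigma,\tau_0)\in E(\Lambda)$, so $\tau(\sigma)=\tau(\tau_0)=:t$. There are two cases.
\begin{itemize}
\item If $c_{n-1}(u)=0$ and $c_{n-1}(v)=1$, the color difference is $\chi(\sigma)-\chi(\tau_0)-t$, which is $1$ by the inequality above.
\item If $c_{n-1}(u)=1$ and $c_{n-1}(v)=0$, the difference is $\chi(\sigma)+t-\chi(\tau_0)$, which is again $1$ mod $2$ by the same inequality.
\end{itemize}
Thus both edge types are properly colored, which closes the induction.

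The main obstacle is finding the correct recursive coloring. The naive choice $c_n(u\sigma)=c_{n-1}(u)+\chi(\sigma)$ fails on $J$-type edges whenever $\tau\equiv 0$ on the relevant component. The fix is to let the parent's color decide whether the child reads its color from $\Sigma$ or from $\Sigma'$. Shift-compatibility is exactly what makes this choice consistent on both edge types: $\tau$ is constant along $G$-edges and along $J$-edges, as recorded in Remark \ref{rmk-equi}. Once this formula is fixed, the remaining verification is the short parity computation above.
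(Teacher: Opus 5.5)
Your proposal is correct and follows essentially the same route as the paper: the same recursive coloring $C_n(ui)\equiv\chi(i)+\tau(i)C_{n-1}(u)\pmod 2$, checked by the same parity computation on $G$-type and $J$-type edges using $\tau(i)=\tau(j)$ along edges of $\Lambda$. Your version adds two harmless touches: it starts from $c_0(\varnothing)=0$, and it splits the $J$-type check into the two cases for the parents' colors.
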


\begin{proof}
We prove by induction on $n$. For base case $n=1,$ by definition, $G_{1}$ is
isomorphic to $G$. Since $G\subset \mathcal{G}$ and $\mathcal{G}$ is
bipartite, $G_{1}$ is bipartite.

Assume $G_{n-1}$ is bipartite and fix a $2$-coloring $C_{n-1}:V(G_{n-1})%
\rightarrow \{0,1\}$. Let $\chi :V(\mathcal{G})\rightarrow \{0,1\}$ be a
shift-compatible proper $2$-coloring of $\mathcal{G}$. Define 
\begin{equation*}
C_{n}(ui)\equiv \chi (i)+\tau (i)C_{n-1}(u)\pmod2,\text{ }u\in \Sigma
_{n-1},i\in \Sigma .
\end{equation*}%
We show that $C_{n}$ is a proper $2$-coloring of $G_{n}$.

Let $(ui,uj)$ be a $G$-type horizontal edge in $G_{n}$. Then $(i,j)\in E(G)$
and (\ref{Lambda})--(\ref{shift-c}) yield $\tau (i)=\tau (j)$, and hence 
\begin{eqnarray*}
C_{n}(ui)+C_{n}(uj) &\equiv &\chi (i)+\tau (i)C_{n-1}(u)+\chi (j)+\tau
(j)C_{n-1}(u)\pmod2 \\
&\equiv &\chi (i)+\chi (j)\pmod2.
\end{eqnarray*}%
Since $(i,j)\in E(G)\subset E(\mathcal{G})$ and $\chi $ is a proper $2$%
-coloring of $\mathcal{G}$, we have $\chi (i)\neq \chi (j)$. Thus $%
C_{n}(ui)\neq C_{n}(uj)$.

Let $(ui,uj)$ be a $J$-type horizontal edge in $G_{n}$. Then $(u,v)\in
E(G_{n-1})$ and $(i,j^{\prime })\in E(J)\subset E(\mathcal{G})$. Since $\chi 
$ is a proper $2$-coloring, it follows that%
\begin{equation}
\chi (i)+\chi (j^{\prime })\equiv 1\pmod2.  \label{hang}
\end{equation}%
Using the definition of the parity shift $\tau \ $and (\ref{Lambda})--(\ref%
{hang}) (such that $\tau (i)=\tau (j)=:\tau $), we obtain 
\begin{equation}
\chi (i)+\chi (j)\equiv 1-\tau \pmod2.  \label{uni}
\end{equation}%
Since $C_{n-1}$ is a proper $2$-coloring of $G_{n-1}$, we have%
\begin{equation}
C_{n-1}(u)+C_{n-1}(v)\equiv 1\pmod2.  \label{bei}
\end{equation}%
It follows from (\ref{uni})--(\ref{bei}) that%
\begin{align*}
C_{n}(ui)+C_{n}(vj)& \equiv \left( \chi (i)+\tau C_{n-1}(u)\right) +\left(
\chi (j)+\tau C_{n-1}(v)\right) \pmod2 \\
& \equiv \chi (i)+\chi (j)+\tau \left( C_{n-1}(u)+C_{n-1}(v)\right) \pmod2 \\
& \equiv (1-\tau )+\tau \cdot 1\equiv 1\pmod2.
\end{align*}%
Hence $C_{n}(ui)\neq C_{n}(vj)$.

In both cases, adjacent vertices in $G_{n}$ receive different colors under $%
C_{n}$. Therefore, $C_{n}$ is a proper $2$-coloring and $G_{n}$ is
bipartite. This completes the induction.
\end{proof}

Let $\deg _{\Gamma }^{+}(u)$ and $\deg _{\Gamma }^{-}(u)$ denote the
out-degree and in-degree of a vertex $u$ in a directed graph $\Gamma $,
respectively. For substitution graph, we specifically define the horizontal
degrees as 
\begin{equation*}
\deg _{h}^{+}(u)=\sharp \{(u,v):(u,v)\in \vec{E}_{h}\}\text{ and }\deg
_{h}^{-}(u)=\sharp \{(v,u):(v,u)\in \vec{E}_{h}\}.
\end{equation*}

\begin{proposition}
\label{prop-bi}Suppose $\mathcal{G}\ $is a shift-compatible bipartite graph.
If there exists an edge $(i,j^{\prime })\in E(\overrightarrow{J})$ such that%
\begin{equation*}
\deg _{\overrightarrow{G}}^{+}(i)\geq 1\text{ and }\deg _{\overrightarrow{G}%
}^{-}(j)\geq 1,
\end{equation*}%
then $\mathcal{U}$ is not hyperbolic.
\end{proposition}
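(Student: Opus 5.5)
The plan is to reduce to Proposition \ref{prop-arbi}: I will construct, level by level, strictly alternating horizontal paths whose lengths grow without bound, and then show these paths are shortest. Fix the edge $(i,j')\in E(\overrightarrow{J})$ together with letters $k,l\in\Sigma$ such that $(i,k)\in E(\overrightarrow{G})$ and $(l,j)\in E(\overrightarrow{G})$; these exist by the degree hypotheses. The key observation is that a strictly alternating horizontal path can be lifted one level with its direction sequence preserved, and then lengthened by $2$ at the ends.

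\emph{Lifting step.} Let $p=(x_0,\dots,x_\ell)$ be a strictly alternating horizontal path at level $n$, with every edge either forward ($d=0$) or backward ($d=1$). Each interior vertex of $p$ is then either a sink (edges $\to x_q\leftarrow$) or a source ($\leftarrow x_q\to$). Append the letter $j$ to sinks and $i$ to sources, and at the endpoints choose whichever letter the incident edge requires. Every edge $x_q\to x_{q+1}$ then has a source at its tail and a sink at its head, so it lifts to the $J$-type edge $(x_qi,x_{q+1}j)\in\vec E_h$ with the same direction. This gives a strictly alternating path $\tilde p$ at level $n+1$ of length $\ell$. Now extend at both ends:
\begin{itemize}
\item if the last edge of $\tilde p$ enters $x_\ell j$ forwards, append the $G$-type edge traversed as $x_\ell j\to x_\ell l$, which has direction $1$ since $(l,j)\in E(\overrightarrow{G})$;
\item if it enters $x_\ell i$ backwards, append $x_\ell i\to x_\ell k$, which has direction $0$.
\end{itemize}
Treat the initial vertex symmetrically. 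The result $p^{+}$ is strictly alternating of length $\ell+2$. Starting from a single edge of $G_1$ and iterating, I obtain strictly alternating paths $p_n$ of length $2n+1$ (or similar).

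\emph{Shortness.} This is the main obstacle. By induction, suppose $p_n$ is a shortest horizontal path from $x$ to $y$, so that $d_h(x,y)=\ell$. The endpoints $x a,\, y b$ of $p_{n+1}$ have parents $x,y$, so the expansive property (\ref{expansive}) gives $d_h(xa,yb)\ge\ell$. By Lemma \ref{le-bipart}, $G_{n+1}$ is bipartite, so $d_h(xa,yb)\equiv \ell+2 \pmod 2$. Together these leave only two possibilities, $d_h(xa,yb)\in\{\ell,\ell+2\}$, and it remains to exclude $d_h(xa,yb)=\ell$.

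To exclude it, I will use Lemma \ref{le-M}: equality $d_h(xa,yb)=\ell$ would produce a shortest path $q$ from $x$ to $y$ with $M(q)_{a,b}>0$, and hence, by the proof of Lemma \ref{le-M}, a lifted shortest path consisting entirely of $J$-type edges starting at letter $a\in\{k,l\}$. I would first try to carry an extra invariant through the induction using the shift-compatible colouring $C_n$ from Lemma \ref{le-bipart}. The idea is to compare the colour predicted for $yb$ along such an all-$J$ lift with the colour $yb$ actually receives. The endpoint letters $k$ and $l$ are obtained from $i$ and $j$ by one $G$-step, so their $\chi$-colours differ from those of $i$ and $j$, while their $\tau$-values agree (both lie in the same component of $\Lambda$). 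This should yield a parity contradiction with an all-$J$ lift of length $\ell$. If that colour bookkeeping fails, the fallback is to choose the endpoints more cleverly, or to apply Lemma \ref{le-dhit} in contrapositive form, so that the distance strictly increases. Once shortness is established, the paths $p_n$ are arbitrarily long, strictly alternating and shortest, and Proposition \ref{prop-arbi} shows that $\mathcal{U}$ is not hyperbolic.
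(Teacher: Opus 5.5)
\textbf{Gap in the shortness step.} The problem comes from extending the lifted path at \emph{both} ends. The new endpoints still have $x_0,x_\ell$ as parents, so (\ref{expansive}) gives only $d_h\ge \ell$. Lemma \ref{le-bipart} determines $d_h$ only modulo $2$, and since $\ell\equiv \ell+2$, these two facts can never separate $\ell$ from $\ell+2$. The colour bookkeeping you propose with $C_n,\chi,\tau$ is itself a mod-$2$ invariant (it is exactly what proves Lemma \ref{le-bipart}), so it adds no information.

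Worse, $d_h=\ell$ genuinely occurs. Take $\Sigma=\{1,2,3,4\}$, $E(\overrightarrow{G})=\{(1,3),(4,2)\}$, $E(\overrightarrow{J})=\{(1,2'),(3,4')\}$, with $(i,j')=(1,2')$, $k=3$, $l=4$. The colouring $\chi(1)=\chi(4)=\chi(1')=\chi(4')=0$, $\chi(2)=\chi(3)=\chi(2')=\chi(3')=1$ is proper on $\mathcal{G}$ with $\tau\equiv 0$, so all hypotheses hold. Starting from the edge $1\to 3$ of $G_1$, your construction produces $p^{+}=(13,11,32,34)$, of length $3$ and direction $101$. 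However, $(13,34)\in\vec{E}_h$ is a $J$-type edge, because $(1,3)\in\vec{E}_h$ and $(3,4')\in E(\overrightarrow{J})$. Hence $d_h(13,34)=1$, $p^{+}$ is not shortest, and your induction fails at its first step. In the language of Lemma \ref{le-M}, the one-edge path $q=(1,3)$ has $M(q)_{k,l}=K_{3,4}=1>0$. Your fallbacks (choosing endpoints more cleverly, or using Lemma \ref{le-dhit} in contrapositive form) are too vague to repair this.

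\textbf{The fix.} Add exactly \emph{one} edge per level. Lift $p$ as you do, but append a single $G$-type edge at the terminal vertex only: put $x_\ell l$ after $x_\ell j$, or $x_\ell k$ after $x_\ell i$, according to the last direction. Leave the initial vertex unchanged. The endpoint parents are still $x_0,x_\ell$, so $d_h\ge \ell$ by (\ref{expansive}). The new path has length $\ell+1$, so Lemma \ref{le-bipart} forces $d_h\equiv \ell+1 \pmod 2$. Together these give $d_h=\ell+1$, so the new path is shortest. Iterating from a single edge of $G_1$ yields strictly alternating shortest paths of length $n$ at level $n$, and Proposition \ref{prop-arbi} concludes. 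This is precisely the paper's argument; your source/sink lifting and the reduction to Proposition \ref{prop-arbi} otherwise match it.
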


\begin{proof}
Let $(i,j^{\prime })$ be an edge in $\overrightarrow{J}$ satisfying the
given degree conditions. Since $\deg _{\overrightarrow{G}}^{+}(i)\geq 1$ and 
$\deg _{\overrightarrow{G}}^{-}(j)\geq 1,$ there exists $i_{\mathrm{out}},j_{%
\mathrm{in}}\in \Sigma $\ such that $(i,i_{\mathrm{out}}),(j_{\mathrm{in}%
},j)\in E(\overrightarrow{G})$. Consider an arbitrary edge $(u,v)\in E(%
\overrightarrow{G}_{1})$. At the level $2$, consider the vertices $ui,vj,vj_{%
\mathrm{in}}.$ Since $(i,j^{\prime })\in E(\overrightarrow{J})$ and $(j_{%
\mathrm{in}},j)\in E(\overrightarrow{G}),$ it follows that $(ui,vj),(vj_{%
\mathrm{in}},vj)\in \vec{E}_{h}.$ Thus, $p_{2}$ forms an alternating
horizontal path of length $2$ with direction sequence $01$. By Lemma \ref%
{le-bipart}, $\overrightarrow{G}_{2}$ is bipartite, hence $p_{2}$ is a
alternating shortest horizontal path. We define $p_{n}$ recursively as
follows. For $n>2$, write 
\begin{equation*}
p_{n-1}=(v_{0},v_{1},\dots ,v_{n-1})
\end{equation*}%
for the path at level $n-1$, and define $p_{n}$ at level $n$ by%
\begin{equation*}
p_{n}=\left\{ 
\begin{array}{cc}
(v_{0}i,v_{1}j,\dots ,v_{n-2}i,v_{n-1}j,v_{n-1}j_{\mathrm{in}}), & \text{if }%
n\text{ is even,} \\ 
(v_{0}i,v_{1}j,\dots ,v_{n-2}j,v_{n-1}i,v_{n-1}i_{\mathrm{out}}), & \text{if 
}n\text{ is odd,}%
\end{array}%
\right.
\end{equation*}%
where $v_{k}\in \Sigma _{n-1}$, and $v_{k}i$, $v_{k}j$, $v_{n-1}j_{\mathrm{in%
}}$, $v_{n-1}i_{\mathrm{out}}$ are their children in $\Sigma _{n}$. Then $%
p_{n}$ has direction sequence 
\begin{equation*}
\mathfrak{d(}p_{n})=\left\{ 
\begin{array}{cc}
\mathfrak{d(}p_{n-1})\mathfrak{d(}v_{n-1}j,v_{n-1}j^{\prime })=0101\cdots 01,
& \text{if }n\text{ is even,} \\ 
\mathfrak{d(}p_{n-1})\mathfrak{d}(v_{n-1}i,v_{n-1}i^{\prime })=0101\cdots
010, & \text{if }n\text{ is odd,}%
\end{array}%
\right.
\end{equation*}%
which implies that $p_{n}$ is an alternating horizontal path at level $n$.

We now prove by induction that $p_{n}$ is a shortest horizontal path for all 
$n\geq 2$.

Suppose $p_{n-1}$ is a\ shortest horizontal path of length $n-1$. Let $q$ be
any shortest horizontal path connecting the endpoints $w_{0}$ and $w_{n}$ of 
$p_{n},$ with length $\ell =d_{h}(w_{0},w_{n}).$ By Lemma \ref{le-bipart}, $%
\overrightarrow{G}_{n}$ is bipartite, so any two vertices on $%
\overrightarrow{G}_{n}$ belong either to the same part or to different
parts, and all paths joining them have lengths of the same parity. As $p_{n}$
has length $n,$ it follows that every horizontal path connecting its
endpoints has length congruent to $n$ modulo $2,$ i.e. 
\begin{equation}
\ell \equiv n\pmod 2.  \label{parity}
\end{equation}%
Suppose, for the sake of contradiction, that $\ell <n$. Then the parity
condition (\ref{parity}) implies $\ell \leq n-2.$ However, by the expansive
property (\ref{expansive}) we have%
\begin{equation*}
\ell =d_{h}(v_{0},v_{n})\geq d_{h}(v_{0}^{-},v_{n}^{-})=L(p_{n-1})=n-1,
\end{equation*}%
which together with $\ell \leq n-2$ leads to the contradiction $n-1\leq \ell
\leq n-2.$ Hence we have $\ell \geq n,$ which implies that $p_{n}$ is a
shortest horizontal path. The existence of arbitrarily long alternating
shortest paths implies, by Proposition \ref{prop-arbi}, that $\mathcal{U}$
is not hyperbolic.
\end{proof}

\begin{lemma}
\label{le-conne_bi}If $G$ is connected and $\mathcal{G}$ is bipartite, then $%
\mathcal{G}$ admits a shift-compatible bipartition.
\end{lemma}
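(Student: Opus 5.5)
Fix an arbitrary proper $2$-coloring $\chi$ of $\mathcal{G}$; one exists because $\mathcal{G}$ is bipartite. The plan is to show that this $\chi$ is already shift-compatible. By Remark \ref{rmk-equi}, it suffices to check that $\tau(i)=\tau(j)$ for every edge $(i,j)\in E(\Lambda)$.

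The key observation is that $\tau$ is constant on $\Sigma$ as soon as $G$ is connected. The subgraph of $\mathcal{G}$ induced on $\Sigma$ is the connected graph $G$, so its proper $2$-coloring is unique up to swapping the colors. The restriction of $\chi$ to $\Sigma$ is such a coloring. Likewise the restriction of $\chi$ to $\Sigma'$ is a proper $2$-coloring of $G'$, which is isomorphic to $G$ via $i\mapsto i'$. Hence the map $i\mapsto \chi(i')$ is also a proper $2$-coloring of the connected graph $G$.

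By uniqueness, either $\chi(i')=\chi(i)$ for all $i\in\Sigma$, or $\chi(i')=1-\chi(i)$ for all $i\in\Sigma$. Concretely, pick a spanning tree of $G$. Along each edge $(i,j)$ we have $\chi(i)\neq\chi(j)$ and $\chi(i')\neq\chi(j')$, so $\chi(i')-\chi(i)\equiv\chi(j')-\chi(j) \pmod 2$. Propagating along the tree then shows that $\tau$ is constant on all of $\Sigma$.

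Once $\tau$ is constant on $\Sigma$, it is in particular constant on each connected component of $\Lambda$. This is exactly the definition of shift-compatibility, so the $J$-edges play no role beyond being compatible with the bipartiteness of $\mathcal{G}$. The only point needing care is that connectivity of $G$ (rather than of $\mathcal{G}$) is what forces the two restricted colorings to agree up to a global swap. I would state that uniqueness fact explicitly and verify it by the spanning-tree propagation above.
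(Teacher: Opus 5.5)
Your proof is correct and is essentially the paper's argument: for each edge $(i,j)$ of $G$, properness of $\chi$ on both $G$ and its copy $G'$ gives $\tau(i)=\tau(j)$, and connectivity of $G$ propagates this so that $\tau$ is constant on $\Sigma$, hence constant on each component of $\Lambda$. Your spanning-tree and uniqueness-up-to-swap framing just repackages the paper's propagation along paths in $G$.
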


\begin{proof}
Since $\mathcal{G}$ is bipartite, there exists a proper $2$-coloring $\chi
:V(\mathcal{G})\rightarrow \{0,1\}.$ Let $(i,j)\in E(G)$, then also $%
(i^{\prime },j^{\prime })\in E(G^{\prime })\subset E(\mathcal{G}).$ Since $%
\chi $ is a proper $2$-coloring, we have 
\begin{equation*}
\chi (i)+\chi (j)\equiv 1\pmod 2,\text{ }\chi (i^{\prime })+\chi (j^{\prime
})\equiv 1\pmod 2.
\end{equation*}%
Subtracting gives 
\begin{equation*}
\tau (i)+\tau (j)=(\chi (i^{\prime })-\chi (i))+(\chi (j^{\prime })-\chi
(j))\equiv 0\pmod 2,
\end{equation*}%
hence 
\begin{equation}
\tau (i)=\tau (j).  \label{adj}
\end{equation}

Since $G$ is connected, any two letters $u,v\in \Sigma $ can be joined by a
path $u=u_{0},\ldots ,u_{k}=v$ in $G$. Iterating the above equality (\ref%
{adj}) along this path yields%
\begin{equation*}
\tau (u)=\tau (u_{1})=\cdots =\tau (u_{k})=\tau (v),
\end{equation*}%
hence $\tau (u)=\tau (v)$ for all $u,v\in \Sigma .$ Thus $\tau $ is constant
on $\Sigma ,$ and hence $\chi $ is shift-compatibleby definition.
\end{proof}

Combining Proposition \ref{prop-bi} and Lemma \ref{le-conne_bi}, we obtain
the following.

\begin{corollary}
\label{coro-nonhyper}Suppose $G$ is connected and $\mathcal{G}$ is
bipartite. If there exists an edge $(i,j^{\prime })\in E(\overrightarrow{J})$
such that%
\begin{equation*}
\deg _{\overrightarrow{G}}^{+}(i)\geq 1\text{ and }\deg _{\overrightarrow{G}%
}^{-}(j)\geq 1,
\end{equation*}%
then $\mathcal{U}$ is not hyperbolic.
\end{corollary}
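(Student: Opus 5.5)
The statement is an immediate combination of two earlier results, so the plan is to check the hypotheses of Proposition \ref{prop-bi} and then invoke it. Proposition \ref{prop-bi} needs two things: that $\mathcal{G}$ is a shift-compatible bipartite graph, and that there is an edge $(i,j^{\prime})\in E(\overrightarrow{J})$ with $\deg^{+}_{\overrightarrow{G}}(i)\geq 1$ and $\deg^{-}_{\overrightarrow{G}}(j)\geq 1$. The second condition is assumed verbatim in Corollary \ref{coro-nonhyper}, so only the first has to be established.

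For shift-compatibility I will apply Lemma \ref{le-conne_bi}. Its hypotheses are that $G$ is connected and $\mathcal{G}$ is bipartite, and both are assumed in the corollary. The lemma then gives a proper $2$-coloring $\chi$ of $\mathcal{G}$ whose parity shift $\tau$ is constant on all of $\Sigma$. A globally constant $\tau$ is in particular constant on each connected component of $\Lambda$, so $\chi$ is shift-compatible and $\mathcal{G}$ is a shift-compatible bipartite graph.

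With both hypotheses verified, Proposition \ref{prop-bi} yields that $\mathcal{U}$ is not hyperbolic. All the substantive work is in the cited results: Lemma \ref{le-bipart} makes each $G_n$ bipartite, so the recursively built alternating paths are shortest, and Proposition \ref{prop-arbi} then rules out hyperbolicity.

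There is essentially no obstacle here. The only point needing care is the observation that connectedness of $G$ forces $\tau$ to be globally constant, which is exactly what Lemma \ref{le-conne_bi} provides, since every edge of $G$ has a copy in $G^{\prime}$ inside $\mathcal{G}$.
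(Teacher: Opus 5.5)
Your proposal is correct and matches the paper's own argument exactly: the paper obtains this corollary by combining Lemma \ref{le-conne_bi}, which turns connectedness of $G$ and bipartiteness of $\mathcal{G}$ into a shift-compatible bipartition, with Proposition \ref{prop-bi}, which then gives non-hyperbolicity. Your check that both hypotheses of Proposition \ref{prop-bi} are met is complete.
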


One can observe that if $G$ is bipartite, and $\sharp E(J)=1,$ then $%
\mathcal{G}$ is also bipartite. Since every vertex in a directed cycle has
both in-degree and out-degree equal to $1$, by Corollary \ref{coro-nonhyper}
we immediately have the following.

\begin{corollary}
\label{coro-cycle}Suppose $G$ is connected and bipartite. If $%
\overrightarrow{J}$ consists of a single edge $(i,j^{\prime })$ such that
both $i$ and $j$ lie on directed cycles of $\overrightarrow{G},$ then $%
\mathcal{U}$ is not hyperbolic.
\end{corollary}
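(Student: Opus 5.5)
The plan is to reduce Corollary~\ref{coro-cycle} to Corollary~\ref{coro-nonhyper}. That corollary needs three things: $G$ is connected, $\mathcal{G}$ is bipartite, and some edge $(i,j^{\prime})\in E(\overrightarrow{J})$ has $\deg^{+}_{\overrightarrow{G}}(i)\geq 1$ and $\deg^{-}_{\overrightarrow{G}}(j)\geq 1$. Connectedness of $G$ is assumed outright, so only the other two conditions need checking.

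\textbf{Degree condition.} Since $i$ lies on a directed cycle of $\overrightarrow{G}$, its successor on that cycle is an out-neighbour, so $\deg^{+}_{\overrightarrow{G}}(i)\geq 1$. Likewise, the predecessor of $j$ on its cycle is an in-neighbour, so $\deg^{-}_{\overrightarrow{G}}(j)\geq 1$. Since $\overrightarrow{G}$ is simple, a directed cycle has length at least $2$, so these neighbours really exist and differ from $i$ and $j$.

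\textbf{Bipartiteness of $\mathcal{G}$.} This is the step needing the most care. Fix a proper $2$-coloring $c$ of $G$. Color $\Sigma$ by $\chi(k)=c(k)$ and $\Sigma^{\prime}$ by $\chi(k^{\prime})=c(k)+s \pmod 2$, choosing $s\in\{0,1\}$ so that $\chi(i)\neq\chi(j^{\prime})$; this means $s\equiv 1+c(i)+c(j)\pmod 2$.

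Then edges of $G$ and of $G^{\prime}$ are properly colored, because shifting every color on $\Sigma^{\prime}$ by the same $s$ preserves properness. The single edge of $J$ is properly colored by the choice of $s$. Since $\sharp E(J)=1$, there are no other cross edges, so $\chi$ is a proper $2$-coloring of $\mathcal{G}$ and $\mathcal{G}$ is bipartite. This confirms the one-line observation stated just before the corollary.

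\textbf{Conclusion.} All hypotheses of Corollary~\ref{coro-nonhyper} now hold, so $\mathcal{U}$ is not hyperbolic.

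The main obstacle is only the bipartiteness check, and specifically the point that a single cross edge can always be properly colored by the global shift $s$ without breaking properness on $G^{\prime}$. Once that is settled, Lemma~\ref{le-conne_bi}, used inside Corollary~\ref{coro-nonhyper}, supplies shift-compatibility automatically.
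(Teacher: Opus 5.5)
Your proposal is correct and follows the paper's route. The paper also reduces directly to Corollary~\ref{coro-nonhyper}, using the directed cycles through $i$ and $j$ for the degree conditions and the observation that a bipartite $G$ with $\sharp E(J)=1$ gives a bipartite $\mathcal{G}$; your shifted coloring on $\Sigma^{\prime}$ just fills in that one-line observation, and your bound $\deg\geq 1$ is the accurate form of the paper's remark that cycle vertices have in- and out-degree equal to $1$.
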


\bigskip

\subsection{Structural criteria for hyperbolicity}

In this subsection we provide two structural sufficient conditions for
substitution graph $\mathcal{U}$ to be hyperbolic. One in terms of simple
combinatorial constraints on the coupling graph $\overrightarrow{\mathcal{G}}
$, and another formulated via a local arborescence condition on $%
\overrightarrow{G}$ and the acyclicity of $\mathcal{G}.$

\begin{lemma}
\label{le-<=1}The following properties hold:

\begin{enumerate}
\item[(i)] If every vertex in $\overrightarrow{\mathcal{G}}$ has out--degree
at most $1$, then every vertex in $\mathcal{D}$ has horizontal out--degree
at most $1$.

\item[(ii)] If every vertex in $\overrightarrow{\mathcal{G}}$ has in--degree
at most $1$, then every vertex in $\mathcal{D}$ has horizontal in--degree at
most $1$.
\end{enumerate}
\end{lemma}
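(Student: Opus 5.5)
Both parts are proved by induction on the level $n$, and (ii) is the mirror image of (i) with all edges reversed, so I concentrate on (i). Out-degree at most $1$ in $\overrightarrow{\mathcal{G}}$ means two things. First, each letter $i\in\Sigma$ has at most one out-neighbour in total among the edges of $\overrightarrow{G}$ together with the edges $(i,j')$ of $\overrightarrow{J}$; edges of $\overrightarrow{G'}$ start in $\Sigma'$, so they do not count against $i$. Second, each $i'\in\Sigma'$ has at most one out-neighbour in $\overrightarrow{G'}$, which is equivalent to $\deg^{+}_{\overrightarrow{G}}(i)\le 1$. I will use the first property, which says that the $G$-out-edges and $J$-out-edges of a single letter $i$ together number at most one.

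The induction claim is that every $u\in\Sigma_n$ satisfies $\deg_h^{+}(u)\le 1$. For $n=1$, the horizontal edges out of the letter $i$ are exactly the edges $(i,j)\in E(\overrightarrow{G})$, since level-$1$ vertices have no horizontal parent edges. Hence the degree is at most $\deg^{+}_{\overrightarrow{\mathcal{G}}}(i)\le 1$. For the inductive step, take $w=xi\in\Sigma_{n}$ with $x\in\Sigma_{n-1}$. Each out-edge $(xi,yj)\in\vec{E}_h$ is either of $G$-type, meaning $y=x$ and $(i,j)\in E(\overrightarrow{G})$, or of $J$-type, meaning $(x,y)\in\vec{E}_h$ and $(i,j')\in E(\overrightarrow{J})$. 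I then count the two types separately.

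The $G$-type edges from $xi$ are in bijection with the $\overrightarrow{G}$-out-neighbours of $i$. The $J$-type edges from $xi$ are in bijection with the pairs $(y,j)$ such that $y$ is a horizontal out-neighbour of $x$ and $j'$ is a $\overrightarrow{J}$-out-neighbour of $i$. By the induction hypothesis there is at most one such $y$, so the number of $J$-type edges is at most $\deg^{+}_{\overrightarrow{J}}(i)$. Adding the two counts gives
\[
\deg_h^{+}(xi)\le \deg^{+}_{\overrightarrow{G}}(i)+\deg^{+}_{\overrightarrow{J}}(i)=\deg^{+}_{\overrightarrow{\mathcal{G}}}(i)\le 1 .
\]
This completes the induction for (i).

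For (ii), I reverse the roles. An in-edge $(yj,xi)$ at $xi$ is either of $G$-type, with $(j,i)\in E(\overrightarrow{G})$, or of $J$-type, with $(y,x)\in\vec{E}_h$ and $(j,i')\in E(\overrightarrow{J})$. The in-degree of $i'$ in $\overrightarrow{\mathcal{G}}$ equals $\deg^{-}_{\overrightarrow{G}}(i)+\deg^{-}_{\overrightarrow{J}}(i')$, since the in-edges of $i'$ come from $\overrightarrow{G'}$ and $\overrightarrow{J}$. That sum is precisely the bound the same count produces, once the inductive hypothesis gives at most one horizontal in-neighbour of $x$. The step that needs care is matching the degree in $\overrightarrow{\mathcal{G}}$ to the correct vertex: $i$ in part (i), and $i'$ in part (ii). Beyond that the argument is a direct count.
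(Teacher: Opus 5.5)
Your proof is correct and is essentially the paper's argument. The paper takes a minimal counterexample level and uses the map $\eta$, which sends each horizontal out-edge of $x=x^-a$ to an out-edge of $a$ in $\overrightarrow{\mathcal{G}}$: $(a,b)$ for $G$-type and $(a,b')$ for $J$-type. That is your bound $\deg_h^{+}(xi)\le \deg^{+}_{\overrightarrow{G}}(i)+\deg_h^{+}(x)\,\deg^{+}_{\overrightarrow{J}}(i)$ phrased as a contradiction. Your observation that part (ii) must be charged to $i'$, using $\overrightarrow{G'}$-edges for the $G$-type in-edges, supplies a detail the paper leaves under ``analogous''.
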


\begin{proof}
It suffices to prove (i), as the proof of (ii) is analogous. Assume, for
contradiction, that there exists a vertex $x\in \Sigma _{n}$ (for some $n$)
with horizontal out--degree at least two in level $n,$ i.e. 
\begin{equation*}
\deg _{\overrightarrow{G}_{n}}^{+}(x)\geq 2,
\end{equation*}%
and choose such an $n$ minimal. Then there are two distinct vertices $y,z\in
\Sigma _{n}$ such that $(x,y)\in \vec{E}_{h}$ and $(x,z)\in \vec{E}_{h}$.
Write 
\begin{equation*}
x=x^{-}a,y=y^{-}b,z=z^{-}c.
\end{equation*}%
Set 
\begin{equation*}
\eta (x,y):=\left\{ 
\begin{array}{cc}
(a,b), & \text{if }(x,y)\text{ is of }G\text{-type,} \\ 
(a,b^{\prime }), & \text{if }(x,y)\text{ is of }J\text{-type.}%
\end{array}%
\right.
\end{equation*}%
Similarly we define $\eta (x,z).$ By construction, both $\eta (x,y)$ and $%
\eta (x,z)$\ are edges of $\overrightarrow{\mathcal{G}}$ with source $a$.
Since $\deg _{\overrightarrow{\mathcal{G}}}^{+}(a)\leq 1,$ all outgoing
edges from $a$ in $\overrightarrow{\mathcal{G}}$ must coincide, hence 
\begin{equation*}
\eta (x,y)=\eta (x,z).
\end{equation*}%
Consequently, $(x,y)$ and $(x,z)$ are of the same type, and hence $b=c.$

If both $(x,y)$ and $(x,z)$ are of $G$-type, then $x^{-}=y^{-}=z.$ It
follows from $b=c$ that 
\begin{equation*}
y=y^{-}b=z^{-}c=z
\end{equation*}%
contradicting $y\neq z.$

If both $(x,y)$ and $(x,z)$ are of $J$-type, then $%
(x^{-},y^{-}),(x^{-},z^{-})\in E(\overrightarrow{G}_{n-1}).$ Since $y\neq z,$
we have $y^{-}\neq z^{-},$ and hence 
\begin{equation*}
\deg _{\overrightarrow{G}_{n-1}}^{+}(x^{-})\geq 2,
\end{equation*}%
which contradicts the minimality of $n$.

In above both cases we obtain a contradiction. Hence no such vertex $x$ can
exist, and $\deg _{\overrightarrow{G}_{n}}^{+}(x)\leq 1$ for all $x\in
\Sigma _{n}$ and $n\geq 1,$ i.e. every vertex in $\mathcal{D}$ has
horizontal out--degree at most $1$.
\end{proof}

\begin{theorem}
\label{th-degree}The substitution graph $\mathcal{U}$ is hyperbolic provided
that

\begin{enumerate}
\item[(i)] $K$ is nilpotent, and

\item[(ii)] either all vertices in $\overrightarrow{\mathcal{G}}$ have
out-degree at most 1, or all vertices have in-degree at most 1.
\end{enumerate}
\end{theorem}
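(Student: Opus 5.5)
The plan is to combine Lemma \ref{le-<=1} with the vanishing criterion of Corollary \ref{coro-N0-N1} and then conclude with Theorem \ref{th-equ}. The key point is that horizontal out-degree (or in-degree) at most $1$ severely restricts the direction sequence of any shortest horizontal path. It forces the sequence to consist of at most two monotone blocks, so a long path must contain a long run of equal factors, and nilpotency of $K$ kills the path matrix.

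\emph{Step 1 (out-degree case).} Assume every vertex of $\overrightarrow{\mathcal{G}}$ has out-degree at most $1$. By Lemma \ref{le-<=1}(i), every vertex of $\mathcal{D}$ has horizontal out-degree at most $1$. Let $p=(u_0,\dots,u_\ell)$ be a shortest horizontal path with direction sequence $d_1\cdots d_\ell$. I claim the pattern $d_q d_{q+1}=10$ never occurs.
\begin{itemize}
\item If it did, then $(u_q,u_{q-1})\in\vec E_h$ and $(u_q,u_{q+1})\in\vec E_h$.
\item Since $p$ is shortest, its vertices are pairwise distinct, so in particular $u_{q-1}\neq u_{q+1}$.
\item Hence $u_q$ would have horizontal out-degree at least $2$, a contradiction.
\end{itemize}
Therefore $\mathfrak d(p)=0^a1^b$ with $a+b=\ell$, and $M(p)=K^a(K^{\mathrm T})^b$.

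\emph{Step 2 (in-degree case).} Symmetrically, assume every vertex of $\overrightarrow{\mathcal{G}}$ has in-degree at most $1$. By Lemma \ref{le-<=1}(ii), every vertex of $\mathcal{D}$ has horizontal in-degree at most $1$. A pattern $d_qd_{q+1}=01$ would mean $(u_{q-1},u_q)$ and $(u_{q+1},u_q)$ both lie in $\vec E_h$, giving $u_q$ two distinct in-neighbours. So here $\mathfrak d(p)=1^a0^b$ and $M(p)=(K^{\mathrm T})^aK^b$.

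\emph{Step 3 (conclusion).} Let $I$ be the nilpotency index of $K$; it is also the nilpotency index of $K^{\mathrm T}$, since $(K^{\mathrm T})^I=(K^I)^{\mathrm T}=\boldsymbol 0$. Set $k=2I-2$. If $\ell>k$, then $a\ge I$ or $b\ge I$, so $S(p)\ge I$, and Corollary \ref{coro-N0-N1} gives $M(p)=\boldsymbol 0$. Since this holds for every shortest horizontal path of length greater than $k$, Theorem \ref{th-equ} yields that $\mathcal U$ is hyperbolic.

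I expect no serious obstacle; the only delicate point is Step 1 (and its mirror in Step 2). There one must check that two consecutive edges of a shortest path meeting at $u_q$ really give two \emph{distinct} out-neighbours (respectively in-neighbours). This follows because a shortest horizontal path visits no vertex twice, so $u_{q-1}\neq u_{q+1}$.
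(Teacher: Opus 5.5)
Your proposal is correct and follows essentially the same route as the paper. Lemma~\ref{le-<=1} forbids the pattern $10$ (resp.\ $01$) in the direction sequence of a shortest horizontal path, so a path of length at least $2I-1$ contains a run of $I$ equal letters; Corollary~\ref{coro-N0-N1} then gives $M(p)=\boldsymbol{0}$, and Theorem~\ref{th-equ} with $k=2I-2$ finishes. Two details in your version are handled more carefully than in the paper: you justify $u_{q-1}\neq u_{q+1}$ from the fact that a shortest path has distinct vertices, and you treat the in-degree case explicitly rather than calling it analogous.
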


\begin{proof}
We only prove the case when all vertices in $\overrightarrow{\mathcal{G}}$
have out-degree at most $1;$ the case of bounded in-degree is analogous.

Let $I$ be the nilpotency index of the matrix $K(J)$. Consider a shortest
horizontal path $p$ in $\mathcal{U}$ with length $L(p)=2I-1$ and denote its
direction sequence by%
\begin{equation*}
\mathfrak{d(}p)=d_{1}d_{2}\cdots d_{2I-1}\in \{0,1\}^{2I-1}.
\end{equation*}

By assumption (ii) and Lemma~\ref{le-<=1}(i), every vertex in $\mathcal{D}$
has horizontal out--degree at most $1$. In particular, along any shortest
horizontal path the direction cannot change from $1$ to $0$. Indeed, if
there exists an index $i\in \{1,\dots ,2I-2\}$ such that $d_{i}=1$ and $%
d_{i+1}=0$, then at the intermediate vertex $v$ the path would use two
distinct outgoing horizontal edges of different types, which contradicts $%
\deg _{h}^{+}(v)\leq 1$.

Since the direction sequence $\mathfrak{d(}p)$ has length $2I-1$ and
contains no subsequence $10$, there exists an index $j$ such that 
\begin{equation*}
d_{j}=d_{j+1}=\cdots =d_{j+I-1},
\end{equation*}%
and hence $S(p)\geq I.$ This implies that $S(p)\geq I$ for all shortest
horizontal paths $p$ of length greater than $k=2I-2.$ Thus $\mathcal{U}$ is
hyperbolic by Corollary \ref{coro-N0-N1}.
\end{proof}

An \emph{arborescence} is a directed graph with a designated root vertex $%
\vartheta $ such that for every other vertex $u$, there exists exactly one
directed path from $\vartheta $ to $u$. In other words, an arborescence is a
directed, rooted tree in which all edges point away from the root. An \emph{%
anti-arborescence} is defined dually as a directed rooted tree in which all
edges point towards the root $\vartheta $, or equivalently, for every vertex 
$u\neq \vartheta $ there exists exactly one directed path from $u$ to $%
\vartheta .$

Note that every vertex in an arborescence has in-degree at most $1$, while
every vertex in an anti-arborescence has out-degree at most $1$. Thus we
obtain the following corollary by Theorem \ref{th-degree}.

\begin{corollary}
\label{cor-arbo}If $K$ is nilpotent and either all connected components of $%
\mathcal{G}$ are arborescences or all connected components of $\mathcal{G}$
are anti-arborescences, then $\mathcal{U}$ is hyperbolic.
\end{corollary}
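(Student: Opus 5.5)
The plan is to reduce Corollary \ref{cor-arbo} directly to Theorem \ref{th-degree}. Hypothesis (i) of that theorem, nilpotency of $K$, is assumed verbatim. So the only work is to verify hypothesis (ii): every vertex of the directed coupling graph $\overrightarrow{\mathcal{G}}$ has in-degree at most $1$ in the arborescence case, and out-degree at most $1$ in the anti-arborescence case.

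First I will make precise what "connected components of $\mathcal{G}$ are arborescences" means. Each connected component of the underlying graph $\mathcal{G}$ spans a subgraph of $\overrightarrow{\mathcal{G}}$, and that subgraph, with its orientation inherited from $\overrightarrow{\mathcal{G}}$, is an arborescence. Every edge of $\overrightarrow{\mathcal{G}}$ lies in exactly one component. Hence the in-degree of a vertex $u$ in $\overrightarrow{\mathcal{G}}$ equals its in-degree inside its own component.

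Next I will check that in-degree is at most $1$ in an arborescence. If some non-root vertex $u$ had two in-neighbours $w_{1}\neq w_{2}$, then concatenating the unique directed paths $\vartheta\to w_{1}$ and $\vartheta\to w_{2}$ with the edges into $u$ would give two distinct directed paths from $\vartheta$ to $u$. This contradicts uniqueness. Some care is needed if one path $\vartheta\to w_{k}$ already passes through $u$; in that case a directed cycle through $u$ exists, which is impossible in a rooted tree. The root itself has in-degree $0$, since the component is a tree (so there is no cycle back to $\vartheta$).

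The anti-arborescence case is dual: reverse all edges and apply the same argument, so every vertex has out-degree at most $1$. With hypothesis (ii) established in either case, Theorem \ref{th-degree} yields hyperbolicity of $\mathcal{U}$.

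The only potential obstacle is interpretive rather than technical. One must make sure the tree condition applies to components of $\mathcal{G}$ with the orientation of $\overrightarrow{\mathcal{G}}$, so that degrees in $\overrightarrow{\mathcal{G}}$ are exactly the degrees in the arborescences. One must also handle isolated vertices, which are trivially degree-$0$ arborescences. Beyond that, the argument is a direct citation of Theorem \ref{th-degree}.
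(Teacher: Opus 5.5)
Your proposal is correct and matches the paper's argument: the paper also deduces this corollary directly from Theorem \ref{th-degree}, noting that every vertex of an arborescence has in-degree at most $1$ and every vertex of an anti-arborescence has out-degree at most $1$. Your justification of these degree bounds, and your remark that degrees in $\overrightarrow{\mathcal{G}}$ can be computed within each component, fill in details the paper leaves implicit.
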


The structural requirement in Corollary \ref{cor-arbo} applies to the entire
connected component. To obtain a more flexible criterion, we now relax this
global constraint and introduce a local version of the arborescence
structure.

Let $\Gamma $ be a directed graph and $F\subset V(\Gamma ).$ We denote by $%
\Gamma \lbrack F]$ the subgraph of $\Gamma $ induced by $F$.

\begin{definition}
\label{LAC}Let $\Gamma $ be a directed graph, $S\subset V(\Gamma )$ and $%
t\in \mathbb{N}$. We say that $\Gamma $ satisfies the $(S,t)$\emph{-local
arborescence condition (}$(S,t)$-LAC\emph{)} if for every $x\in S$, the
subgraph $\Gamma \lbrack U_{t}(x)]$ of $\Gamma $ induced by the $t$%
-neighborhood $U_{t}(x)$ of $x$ satisfies the following properties:

\begin{enumerate}
\item[(i)] $\Gamma \lbrack U_{t}(x)]$ is either an arborescence or an
anti-arborescence (rooted at $x$), and

\item[(ii)] either the height of $\Gamma \lbrack U_{t}(x)]$ equals $t$, or $%
\Gamma \lbrack U_{t}(x)]$ is an isolated connected component of $\Gamma $
that intersects $S$ only at $x$.
\end{enumerate}

Furthermore, we say that $\Gamma $ satisfies the $(S_{1},S_{2},t)$\emph{%
-local arborescence condition }($(S_{1},S_{2},t)$-LAC) if it satisfies the
following properties:

\begin{enumerate}
\item[(i)] $\Gamma $ satisfies the $(S_{1}\cup S_{2},t)$-LAC,

\item[(ii)] For every $x\in S_{1}$, $\Gamma \lbrack U_{t}(x)]$ is an
arborescence, and

\item[(iii)] For every $x\in S_{2}$, $\Gamma \lbrack U_{t}(x)]$ is an
anti-arborescence.
\end{enumerate}
\end{definition}

\begin{theorem}
\label{th-lac}Suppose $K$ is nilpotent and the nilpotency index of $K$ is $I$%
. Let 
\begin{equation*}
S_{1}=\{j\in \Sigma :\text{the }j\text{-th column of }K\text{ is not the
zero vector}\}
\end{equation*}%
and 
\begin{equation*}
S_{2}=\{i\in \Sigma :\text{the }i\text{-th row of }K\text{ is not the zero
vector}\}.
\end{equation*}%
If $\overrightarrow{G}$ satisfies the $(S_{1},S_{2},I-1)$-LAC and $\mathcal{G%
}$ is acyclic, then $\mathcal{U}$ is hyperbolic.
\end{theorem}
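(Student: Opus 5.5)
The plan is to verify the criterion of Corollary \ref{coro-N0-N1}. Concretely, I will show that there is a $k$ (something like $k=2I$ or a small multiple of $I$) such that every shortest horizontal path $p$ with $L(p)>k$ satisfies $M(p)=\boldsymbol{0}$. Theorem \ref{th-equ} then gives hyperbolicity. Rather than using $S(p)\geq I$ literally, I will argue directly on the letter sequences. If $M(p)\neq \boldsymbol{0}$, then as in the proof of Lemma \ref{le-M} there are letters $c_0,\dots ,c_\ell$ with $(K_{d_q})_{c_{q-1},c_q}=1$ for every $q$. Each $J$-step then joins an $S_2$-letter to an $S_1$-letter in the direction prescribed by $d_q$. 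The goal is to show that long such chains force a long monotone block of $K$'s or of $K^{\mathrm{T}}$'s, which is impossible by nilpotency with index $I$. Alternatively, such chains contradict the fact that $p$ is shortest.

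The key structural step uses acyclicity of $\mathcal{G}$. I first prove by induction on $n$, in the spirit of Lemma \ref{le-<=1}, that each level graph $G_n$ is a forest. A cycle in $G_n$ projects under $u\mapsto u^-$ to a closed walk in $G_{n-1}$. Since $G_{n-1}$ is a forest, this closed walk backtracks. Where it backtracks across a $J$-type edge, the letters on the cycle trace a closed walk in $\mathcal{G}$ passing through $\Sigma'$. Where the parents are constant, the letters trace a walk in $G$. Acyclicity of $\mathcal{G}$ should force the letter walk to backtrack as well, which contradicts the cycle being simple. Since the levels are forests, a shortest horizontal path is the unique path between its endpoints. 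Its shape is therefore determined by the tree structure.

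Next, I analyse how the direction sequence $\mathfrak{d}(p)$ can switch between $0$ and $1$. A switch happens at an intermediate vertex $v=v^-a$ of the path. Around $v$ the path uses either a $J$-edge and a $G$-edge, or two $J$-edges in opposite directions. I will show that the letter $a$ lies in $S_1$ or $S_2$, according to whether it is the head or the tail of the incident $J$-edge. The $(S_1,S_2,I-1)$-LAC then says that within distance $I-1$ of $a$, the graph $\overrightarrow{G}$ is an arborescence if $a\in S_1$ and an anti-arborescence if $a\in S_2$. For an arborescence, edges point away from $a$, so the path cannot arrive at and leave $v$ through two $G$-edges in a way that reverses direction inside the ball. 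By the acyclicity of $\mathcal{G}$, the path also cannot return to $J$-edges there except through another $S_1/S_2$ letter. The height condition (ii) of LAC guarantees that the monotone stretch runs for at least $I-1$ further $G$-steps, unless the component is isolated. In the isolated case, the path must terminate or use a $J$-edge again inside that component. After a direction switch, the path must therefore run monotonically for about $I-1$ steps. A path of length exceeding roughly $2I$ thus contains either a monotone run of $I$ consecutive $J$-steps, giving $S(p)\geq I$ and $M(p)=\boldsymbol{0}$, or a configuration in which the path matrix entries must vanish because the required letters $c_q$ cannot exist.

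I expect the main obstacle to be the translation between $G$-type steps, which do not appear in $M(p)$ directly, and the $K$-factors. One subtlety is that $M(p)$ is defined using all edges of $p$, although Lemma \ref{le-M} only uses it when $p$ has a lift consisting entirely of $J$-type edges. The proof therefore really only needs to rule out long shortest paths whose lifts are all $J$-type. For such a lift, the parent path $p$ and the letter sequence $(c_q)$ form a walk in $\mathcal{G}$ alternating between $\Sigma$ and $\Sigma'$. Acyclicity together with the tree structure of the levels should force this walk to be a geodesic in the forest $\mathcal{G}$. The LAC then bounds the length of any non-monotone portion by $I-1$ on each side of a switch, and nilpotency bounds the monotone portions by $I-1$. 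I would attempt this bookkeeping first and fall back to a cruder bound such as $k=3I$ if the exact count is messy.
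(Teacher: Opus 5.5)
Your proposal has a genuine gap: its quantitative core is false, because no threshold $k$ that depends only on $I$ can work. Two parts are sound. Your observation that a $J$-edge followed by a $G$-edge cannot switch direction is exactly Claim~\ref{clm0}, and the forest step is the first half of Lemma~\ref{le-GPP}. The gap is that you only treat switches at a vertex adjacent to a $J$-edge. A shortest path can also switch between two $G$-type edges at a letter outside $S_1\cup S_2$, and there the LAC says nothing.

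For example, let $N\ge 4$ be even and let $\overrightarrow{G}$ be the path $1\to 2\leftarrow 3\to 4\leftarrow\cdots\to N$, with $E(\overrightarrow{J})=\{(N,1')\}$. Then:
\begin{itemize}
\item $K$ has the single nonzero entry $K_{N,1}=1$, so $I=2$, $S_1=\{1\}$ and $S_2=\{N\}$;
\item the $(S_1,S_2,1)$-LAC holds, and $\mathcal{G}$ is a path, hence acyclic.
\end{itemize}
Since the levels are forests, for any vertex $u$ the path $p=(u1,u2,\dots,uN)$ is a shortest horizontal path. Its direction sequence $0101\cdots 0$ is strictly alternating, so $M(p)=(KK^{\mathrm{T}})^{N/2-1}K\neq\mathbf{0}$ by Remark~\ref{rmk0}. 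Yet $L(p)=N-1$ is arbitrarily large compared with $I$.

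The same example refutes your expectation that the letters $c_q$ of a $J$-lift trace a geodesic in $\mathcal{G}$. The lift $(1N,21,3N,41,\dots,N1)$ of $(1,2,\dots,N)$ is again a shortest path with nonzero matrix, by Lemma~\ref{le-M}. Its letters oscillate $N,1,N,1,\dots$, and every switch along it is between two $J$-type edges of opposite directions, about which the LAC at that level says nothing. Note also that the LAC constrains the letters of $p$ itself, whereas the $c_q$ are letters one level down and are governed by $K$ alone.

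What is missing is a mechanism that controls these two kinds of switches. The paper lets the threshold depend on $\mathcal{G}$. It sets $L_0$ to be the maximal length of a geodesic $q$ in $\mathcal{G}$ with $M(q)\neq\mathbf{0}$, and $L^{*}=L_0+2(I-2)+1$. It then splits into three cases.
\begin{itemize}
\item Case A: $p$ lies over at most two adjacent parents. Then $\Phi(p)$ is a geodesic in $\mathcal{G}$ with the same direction sequence, so $G$--$G$ switches are absorbed into $L_0$.
\item Case B: some interior parent of $p^-$ carries two consecutive vertices of $p$. Then the LAC and Claim~\ref{clm0} force a monotone run of length $I$.
\item Case C: sibling blocks occur only at the two ends. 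These blocks are monotone in the direction of the adjacent $J$-edge (or already long enough to give a run of length $I$), and $\mathfrak{d}(p)=[a]_{i_0}\,\mathfrak{d}(p^-)\,[b]_{t-i_\ell}$. Here the $J$--$J$ switches of $p$ are exactly the switches of $p^-$.
\end{itemize}
In Case C, acyclicity gives the geodesic projection property (Lemma~\ref{le-GPP}), so $p^-$ is again shortest with $M(p^-)\neq\mathbf{0}$. Iterating up the levels gives $\mathfrak{d}(p)=[a]_{k_a}\,\mathfrak{d}(p')\,[b]_{k_b}$ with $L(p')\le L_0$, so $L(p)\ge L^{*}$ forces $k_a\ge I-1$ or $k_b\ge I-1$, hence $S(p)\ge I$. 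Your plan has neither this projection/induction step nor a base constant like $L_0$, so the bookkeeping you describe cannot close.
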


\begin{remark}
\label{rmk3}By definition, $S_{1}$ (resp.\ $S_{2}$) consists of those
letters whose corresponding column (resp.\ row) of $K$ is non-zero. If $%
(u,v)\in \overrightarrow{E}_{h}$ is of $J$-type, then the entry of $K$
indexed by $(\varphi (u),\varphi (v))$ is non-zero, so $\varphi (u)\in S_{2}$
and $\varphi (v)\in S_{1}$; in particular, 
\begin{equation}
\varphi (u),\varphi (v)\in S_{1}\cup S_{2}.  \label{S1S2}
\end{equation}%
Moreover, $(S_{1},S_{2},t)$-LAC implies that if $x\in S_{1}\cap S_{2}$, then 
$\Gamma \lbrack U_{t}(x)]$ must form an isolated connected component
(otherwise it could not be both an arborescence and an anti-arborescence).
\end{remark}

\begin{claim}
\label{clm0}Suppose $\overrightarrow{G}$ satisfies the $(S_{1},S_{2},I-1)$%
-LAC and $(u,v),(v,w)\in E_{h}$. If $(u,v)$ is of $J$-type and $(v,w)$ is of 
$G$-type, then the two edges have the same direction$,$ i.e. 
\begin{equation*}
\mathfrak{d}(u,v)=\mathfrak{d}(v,w).
\end{equation*}
\end{claim}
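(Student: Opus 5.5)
The plan is to read off the direction of each edge from the last letter $b:=\varphi(v)$ of the middle vertex. The $J$-type edge $(u,v)$ decides whether $b\in S_{1}$ or $b\in S_{2}$. The local arborescence condition at $b$ then forces the orientation of every $\overrightarrow{G}$-edge incident to $b$, and hence the direction of the $G$-type edge $(v,w)$.

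First I record the basic facts. Since $E(J)\neq\emptyset$ and $K$ is nilpotent, $K\neq \boldsymbol{0}$, so $I\geq 2$ and $t:=I-1\geq 1$. Therefore every $\overrightarrow{G}$-neighbour of $b$ lies in $U_{t}(b)$, and every edge of $\overrightarrow{G}$ between $b$ and such a neighbour is an edge of the induced subgraph $\overrightarrow{G}[U_{t}(b)]$. Since $(v,w)$ is of $G$-type, $v^{-}=w^{-}$, and for $c:=\varphi(w)$ we have:
\begin{itemize}
\item $\mathfrak{d}(v,w)=0$ exactly when $(b,c)\in E(\overrightarrow{G})$;
\item $\mathfrak{d}(v,w)=1$ exactly when $(c,b)\in E(\overrightarrow{G})$.
\end{itemize}
So it suffices to determine the orientation of the edge between $b$ and $c$ in $\overrightarrow{G}$.

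\emph{Case $\mathfrak{d}(u,v)=0$.} Here $(u,v)\in\overrightarrow{E}_{h}$ is of $J$-type, so $(\varphi(u),b^{\prime})\in E(\overrightarrow{J})$ and $K_{\varphi(u),b}=1$. Thus the $b$-th column of $K$ is nonzero, i.e. $b\in S_{1}$ (cf. Remark~\ref{rmk3}). By part (ii) of the $(S_{1},S_{2},I-1)$-LAC, $\overrightarrow{G}[U_{t}(b)]$ is an arborescence rooted at $b$, so all its edges point away from the root. In particular $b$ has in-degree $0$ in this induced subgraph, and every edge between $b$ and a neighbour $c$ is oriented $b\to c$. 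Hence $(b,c)\in E(\overrightarrow{G})$ and $\mathfrak{d}(v,w)=0$.

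\emph{Case $\mathfrak{d}(u,v)=1$.} Here $(v,u)\in\overrightarrow{E}_{h}$ is of $J$-type, so $K_{b,\varphi(u)}=1$, the $b$-th row of $K$ is nonzero, and $b\in S_{2}$. Part (iii) of the LAC makes $\overrightarrow{G}[U_{t}(b)]$ an anti-arborescence rooted at $b$, so its edges point toward $b$. Hence $(c,b)\in E(\overrightarrow{G})$ and $\mathfrak{d}(v,w)=1$.

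Two points need care. First, the orientation between $b$ and $c$ must be unique, so that "$b$ has in-degree $0$" really excludes $(c,b)$. This holds because $\overrightarrow{G}$ is simple and an (anti-)arborescence has no pair of opposite edges, since its underlying graph is a tree. Second, the case $b\in S_{1}\cap S_{2}$: then $\overrightarrow{G}[U_{t}(b)]$ must be simultaneously an arborescence and an anti-arborescence rooted at $b$. By Remark~\ref{rmk3} this forces $b$ to have no $\overrightarrow{G}$-edges at all, so no $G$-type edge $(v,w)$ exists and the claim holds vacuously. I expect this degenerate case, together with the isolated-component alternative in the LAC, to be the only mildly delicate step. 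In both situations the rooted (anti-)arborescence structure at $b$ is still available, and that is all the argument uses.
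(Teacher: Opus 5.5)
Your proposal is correct and takes the same route as the paper. The $J$-type edge places $\varphi(v)$ in $S_1$ (or $S_2$), and the LAC then makes $\overrightarrow{G}[U_{I-1}(\varphi(v))]$ an arborescence (or anti-arborescence) rooted at $\varphi(v)$, which fixes the orientation of the $G$-type edge. The paper handles only the case $(u,v)\in\overrightarrow{E}_h$ ``without loss of generality'' and omits your side checks ($I\geq 2$, no opposite edges, the case $\varphi(v)\in S_1\cap S_2$); these are valid extra care, not a different argument.
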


\begin{proof}
Without loss of generality we can assume that $(u,v)\in \overrightarrow{E}%
_{h}.$ Then by the definition of $J$-type horizontal edge we have $%
K_{\varphi (u),\varphi (v)}=1,$ and hence $\varphi (v)\in S_{1}.$ Since $%
\overrightarrow{G}$ satisfies the $(S_{1},S_{2},I-1)$-LAC, $%
H:=G[U_{t}(\varphi (v))]$ is a arborescence rooted at $\varphi (v),$ which
together with the definition of $G$-type horizontal edge imply that 
\begin{equation*}
(\varphi (v),\varphi (w))\in E(H).
\end{equation*}%
Then we obtain that 
\begin{equation*}
\mathfrak{d}(u,v)=0=\mathfrak{d}(v,w).
\end{equation*}
\end{proof}

Let $p=(x_{0},x_{1},\dots ,x_{t})$ be a horizontal path in $\mathcal{U}$ and
let $p^{-}=(y_{0},y_{1},\dots ,y_{\ell })$ be its projected walk. We first
consider the case $\ell \in \{0,1\}$, i.e.\ $0\leq L(p^{-})\leq 1$. In this
case we define a map%
\begin{equation*}
\Phi _{p}:\{x_{0},\dots ,x_{t}\}\longrightarrow \Sigma \cup \Sigma ^{\prime }
\end{equation*}%
by 
\begin{equation*}
\Phi _{p}(x_{i})=%
\begin{cases}
\varphi (x_{i}), & \text{if }x_{i}^{-}=y_{0}, \\[0.3ex] 
\varphi (x_{i})^{\prime }, & \text{if }\ell =1\text{ and }x_{i}^{-}=y_{1}.%
\end{cases}%
\end{equation*}%
We then set 
\begin{equation*}
\Phi (p):=\left( \Phi _{p}(x_{0}),\Phi _{p}(x_{1}),\dots ,\Phi
_{p}(x_{t})\right) .
\end{equation*}%
By the definition of horizontal edges, the map $\Phi _{p}$ is a graph
homomorphism from the path $p$ to the coupling graph $\mathcal{G}$.
Consequently, $\Phi (p)$ constitutes a directed walk in $\mathcal{G}$.

To prove Theorem \ref{th-lac}, we need to ensure that geodesics do not
deviate unpredictably from the hierarchical structure. This motivates the
following geometric property. We say that a substitution graph $\mathcal{U}$
satisfies the \emph{geodesic projection property} (GPP) if for every
shortest horizontal path $p$ in $\mathcal{U}$, its projected walk $p^{-}$ is
also a shortest horizontal path.

\begin{lemma}
\label{le-GPP}If $\mathcal{G}$ is acyclic, then $\mathcal{U}$ satisfies the
GPP.
\end{lemma}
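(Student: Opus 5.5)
The plan is to show that acyclicity of $\mathcal{G}$ propagates to every horizontal level: each $G_n$ is a forest. Then a shortest horizontal path $p$ at level $n$ is the unique simple path in $G_n$ between its endpoints, and $p^-$ is shortest as soon as it is a simple walk in the forest $G_{n-1}$. So the argument has two parts: (a) every $G_n$ is acyclic, and (b) the projected walk of a shortest horizontal path never revisits a vertex.

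For (a) I would use the local model provided by $\Phi_p$. For a single vertex $a\in\Sigma_{n-1}$, the subgraph of $G_n$ induced by the children of $a$ is a copy of $G$. For a horizontal edge $(a,b)\in\vec E(G_{n-1})$, the subgraph $H_{a,b}$ induced by the children of $a$ and $b$ embeds into $\mathcal{G}$ via $ai\mapsto i$, $bj\mapsto j'$, because its edges are exactly $G$-type edges inside each block and $J$-type edges across. Hence every such $H_{a,b}$ is a forest.

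I then induct on $n$; the base case is $G_1\cong G\subset\mathcal G$. Suppose $C$ is a cycle in $G_n$ and $G_{n-1}$ is a forest. Its projection $C^-$ is a closed walk in the forest $G_{n-1}$. If $C^-$ is trivial, $C$ lies in one copy of $G$, which is a contradiction. Otherwise the support of $C^-$ is a finite tree with a leaf $a$ and unique neighbour $b$. Every visit of $C$ to the children of $a$ is then an excursion that enters from children of $b$ and returns to children of $b$. I would try to shortcut each excursion by a path inside $H_{a,b}$, or to show that $C$ together with these excursions forces a cycle in $H_{a,b}$. Removing $a$ from the support in this way lets us induct on the size of the support, until $C$ lies in a single $H_{a,b}$, which is a contradiction.

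For (b), suppose $p^-=(y_0,\dots,y_\ell)$ is not simple. A closed subwalk of a walk in a forest contains a backtrack $y_{k-1}=y_{k+1}=a$, $y_k=b$. So $p$ contains a subpath from a child $u$ of $a$, through children of $b$, to a child $w$ of $a$, and this subpath lies in $H_{a,b}$. If $u$ and $w$ are joined inside the copy of $G$ under $a$, then $H_{a,b}$ contains two distinct $u$--$w$ paths, contradicting acyclicity. This gives (b), and GPP follows because simple walks in forests are geodesic.

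The main obstacle is this backtracking step, together with the analogous excursion step in (a). If $u$ and $w$ lie in different components of $G$, no contradiction seems available. For instance, take $G$ with the single edge $1\to2$ and an isolated letter $3$, and $J$-edges $(1,2'),(3,2')$. Then $\mathcal G$ is acyclic, yet $a1,b2,a3$ is a shortest horizontal path whose projection $(a,b,a)$ is not shortest. I would therefore first attempt the argument under the additional assumption that $G$ is connected, or else modify the definition of $p^-$ to also cancel backtracks. I would then check which version the later use in Theorem \ref{th-lac} actually needs.
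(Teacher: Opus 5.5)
Your counterexample is correct, so the lemma is false as stated: the right outcome here is a refutation, not a proof. Take $E(\overrightarrow{G})=\{(1,2)\}$ with the letter $3$ isolated in $G$, and $E(\overrightarrow{J})=\{(1,2'),(3,2')\}$. Then $\mathcal{G}$ is the path $2,1,2',1'$ with $3$ attached to $2'$ (and $3'$ isolated), so it is acyclic. At level $2$ the vertices $11$ and $13$ are not adjacent, so $(11,22,13)$ is a shortest horizontal path. Its projection $(1,2,1)$, however, has length $2$ between equal endpoints.

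The paper's proof breaks exactly where you expected. It asserts that a backtrack $x,y,x$ in $p^-$ forces $p$ to repeat an edge. But two distinct children of $x$ can be joined to the same child of $y$ by distinct $J$-type edges, so no edge need be repeated.

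The same example also refutes the paper's preliminary claim that every $G_n$ is acyclic. The step ``$\mathcal{C}^-$ must be trivial'' ignores closed walks that backtrack. At level $3$, the arcs $(11,12)$ and $(11,22)$ of $\overrightarrow{G}_2$ produce the $4$-cycle $111\to 122\leftarrow 113\to 222\leftarrow 111$. So your step (a) cannot be completed without extra hypotheses either. Redefining $p^-$ to cancel backtracks is not a good fix. Once $G_{n-1}$ has cycles, reduced walks need not be geodesic. Moreover, Lemma \ref{le-kakb} needs one letter of $\mathfrak{d}(p^-)$ per $J$-type edge of $p$ in order to obtain (\ref{decomp}).

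Your proposed repair (assume $G$ connected) does go through.
\begin{itemize}
\item If $G$ is connected and $\mathcal{G}$ is acyclic, then $G$ is a tree and $\sharp E(J)=1$, say $E(\overrightarrow{J})=\{(i,j')\}$. Indeed, two $J$-edges together with connecting paths in $G$ and $G'$ would close a cycle.
\item Then $G_n$ arises from $G_{n-1}$ by replacing each vertex $u$ by a copy of $G$ and each arc $(u,v)$ by the single edge $(ui,vj)$.
\item Assume $\overrightarrow{G}$ has no pair of opposite arcs, so that arcs of $\overrightarrow{G}_{n-1}$ correspond bijectively to edges of $G_{n-1}$. Induction then shows $G_n$ is connected with $N^{n-1}(N-1)+N^{n-1}-1=N^n-1$ edges, hence a tree.
\item For the same reason, exactly one edge joins $J_1(a)$ and $J_1(b)$ for each edge $\{a,b\}$ of $G_{n-1}$.
\item Now the paper's repeated-edge argument is literally valid. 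A backtrack $a,b,a$ in $p^-$ would make $p$ use that unique edge twice. So $p^-$ is backtrack-free, hence a simple path in the tree $G_{n-1}$, hence shortest.
\end{itemize}
If this hypothesis is added to Theorem \ref{th-lac}, its use of GPP is justified; Example \ref{exa2} satisfies it. Without it, the proof of Theorem \ref{th-lac} is left with a gap for disconnected $G$.
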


\begin{proof}
We first claim that $G_{n}$ is acyclic for all $n\geq 1.$ We proceed by
induction on $n$. For $n=1$, $G_{1}$ is isomorphic to the subgraph $G$ of $%
\mathcal{G}$, hence $G_{1}$ is acyclic. Assume that $G_{n-1}$ is acyclic.
Assume that $G_{n}$ contains a cycle $\mathcal{C}$. Then $\mathcal{C}^{-}$
must be trivial. Consequently, all vertices of $\mathcal{C}$ share the same
parent $u$, i.e. $L(\mathcal{C}^{-})=0$. Then $\Phi (\mathcal{C})$ is also a
cycle on $\mathcal{G}$, which contradicts the acyclicity of $\mathcal{G}$.
Therefore, $G_{n}$ is acyclic.

We now verify that $\mathcal{U}$ satisfies the GPP. Let $p$ be a shortest
horizontal path in $\mathcal{U}$ connecting vertices $u$ and $v$ at level $n$%
. We show that $p^{-}$ is a simple path; the uniqueness of shortest paths in
a forest then implies the GPP.

Suppose $p^{-}$ contains repeated vertices. Since $G_{n-1}$ is acyclic, any
non-simple walk between two vertices must backtrack. That is, $p^{-}$
contains a subsequence of the form $x,y,\dots ,y,x$. In the child level,
this would require $p$ to traverse from the $J_{1}(x)$ to $J_{1}(y)$ and
return to $J_{1}(x)$ (where $J_{1}(w)$ denote the $1$-th descendant set of $%
w\in V$). Hence $p$ contains a repeated edge, which contradicts the fact
that $p$ is path.
\end{proof}

Let 
\begin{equation*}
L_{0}:=\max \{L(q):M(q)\neq \mathbf{0},\text{ }q\text{ is a shortest path in 
}\mathcal{G}\}.
\end{equation*}%
Let $p=(x_{0},x_{1},\dots ,x_{t})$ be a shortest horizontal path with $%
L(p)>L_{0}$, and let $p^{-}=(y_{0},y_{1},\dots ,y_{\ell })$ be its reduced
projected walk. We distinguish three cases, see Fig. \ref{Fig-classifc}.

\begin{enumerate}
\item[\textbf{Case A.}] $0\leq \ell \leq 1$.

\item[\textbf{Case B.}] $\ell \geq 2$ and there exist an index $i$ ($1\leq
i\leq \ell -1$) and integers $r<s$ such that 
\begin{equation}
x_{r}^{-}=x_{r+1}^{-}=\dots =x_{s}^{-}=y_{i}.  \label{rsi}
\end{equation}

\item[\textbf{Case C.}] $\ell \geq 2$ and for every index $i$ ($1\leq i\leq
\ell -1$) and every $k$ ($0\leq k\leq t-1$), 
\begin{equation}
x_{k}^{-}=x_{k+1}^{-}=y_{i}\quad \text{never holds}.  \label{never}
\end{equation}
\end{enumerate}

\begin{figure}[tbph]
\centering\includegraphics[width=1.0\textwidth]{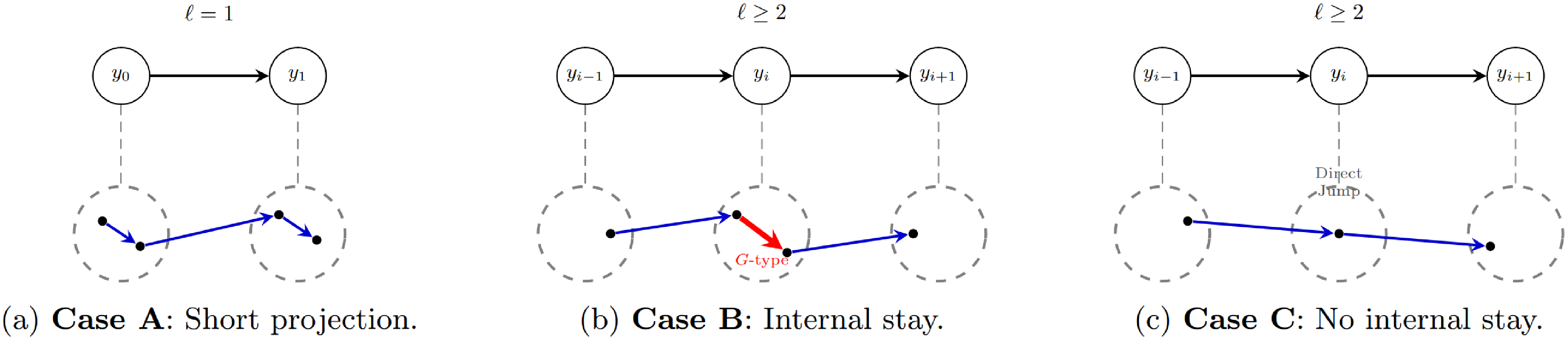} \vspace{%
-0.1cm}
\caption{Classification of a shortest horizontal path $p$.}
\label{Fig-classifc}
\end{figure}

\begin{claim}
\label{clm1}Suppose $K$ is nilpotent and the nilpotency index of $K$ is $I$.
Let $p$ be a shortest horizontal path in $\mathcal{U}$ with $L(p)>L_{0}$. If 
$p$ falls into Case A, then $M(p)=\boldsymbol{0}.$
\end{claim}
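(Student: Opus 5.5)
The plan is to transport $p$ into the finite coupling graph $\mathcal{G}$ via the map $\Phi_{p}$ introduced above. There it becomes a shortest path in $\mathcal{G}$ with the same direction sequence, so the definition of $L_{0}$ forces the path matrix to vanish. Throughout I use the standing hypothesis of Theorem \ref{th-lac} that $\mathcal{G}$ is acyclic. Write $p=(x_{0},\dots ,x_{t})$ with $t=L(p)>L_{0}$ and $p^{-}=(y_{0})$ or $(y_{0},y_{1})$.

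\emph{Step 1 (normalising orientation).} If $\ell =1$, then $(y_{0},y_{1})\in E_{h}$. If instead $(y_{1},y_{0})\in \overrightarrow{E}_{h}$, I replace $p$ by its reversal $\bar{p}$. Reversal reverses the direction sequence and complements each entry, so $M(\bar{p})=K_{1-d_{t}}\cdots K_{1-d_{1}}=M(p)^{\mathrm{T}}$. Hence $M(p)=\mathbf{0}$ iff $M(\bar{p})=\mathbf{0}$, and $\bar{p}$ is still a shortest horizontal path of the same length. So I may assume $(y_{0},y_{1})\in \overrightarrow{E}_{h}$.

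\emph{Step 2 ($\Phi(p)$ is a directed path with the same direction sequence).} I check edge by edge that $\Phi_{p}$ sends each edge of $p$ to an edge of $\overrightarrow{\mathcal{G}}$ with the same orientation:
\begin{itemize}
\item a $G$-type edge $(x_{k},x_{k+1})$ inside $J_{1}(y_{0})$ goes to $(\varphi(x_{k}),\varphi(x_{k+1}))\in E(\overrightarrow{G})$;
\item a $G$-type edge inside $J_{1}(y_{1})$ goes to the corresponding edge of $\overrightarrow{G^{\prime}}$;
\item a $J$-type edge must join a child $y_{0}a$ to a child $y_{1}b$, and since $(y_{0},y_{1})\in \overrightarrow{E}_{h}$ (and there is no edge $(y_{1},y_{0})$ as $\overrightarrow{G}_{n-1}$ has no 2-cycles by acyclicity) it is $(y_{0}a,y_{1}b)$ with $(a,b^{\prime})\in E(\overrightarrow{J})$. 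It therefore maps to $(\Phi_{p}(y_{0}a),\Phi_{p}(y_{1}b))=(a,b^{\prime})$.
\end{itemize}
Thus $\mathfrak{d}(\Phi(p))=\mathfrak{d}(p)$. If the path matrix of a path in $\mathcal{G}$ is read as the product of $K_{d}$ over its direction sequence, this gives $M(\Phi(p))=M(p)$.

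\emph{Step 3 ($\Phi(p)$ is shortest in $\mathcal{G}$).} The map $\Phi_{p}$ is injective on the vertices of $p$. Children of $y_{0}$ are distinguished by their last letters, children of $y_{1}$ likewise, and the two families land in the disjoint sets $\Sigma$ and $\Sigma^{\prime}$. Since $p$ is a shortest horizontal path it has no repeated vertices, so $\Phi(p)$ is a simple path in $\mathcal{G}$. Because $\mathcal{G}$ is acyclic (a forest), a simple path is the unique path between its endpoints, hence a shortest path in $\mathcal{G}$ of length $t>L_{0}$. By the definition of $L_{0}$ as a maximum over shortest paths with nonvanishing matrix, $M(\Phi(p))=\mathbf{0}$, hence $M(p)=\mathbf{0}$.

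The main obstacle is Step 2, specifically the orientation bookkeeping for $J$-type edges. The prime in $\Phi_{p}$ is attached to the children of $y_{1}$, so it is compatible with $\overrightarrow{J}$ only when the parent edge points from $y_{0}$ to $y_{1}$; that is why the reversal in Step 1 is needed. Nilpotency of $K$ enters only implicitly: it is what makes $L_{0}$ a meaningful bound, and $L_{0}$ is finite in any case since $\mathcal{G}$ is finite.
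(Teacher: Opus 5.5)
Your proposal is correct and takes the same route as the paper: map $p$ into the coupling graph via $\Phi_{p}$, note that $\Phi(p)$ is a shortest path in $\mathcal{G}$ with the same direction sequence, and apply the definition of $L_{0}$. Your two extra steps fill in points the paper's short proof skips, and both are needed. Reversing $p$ when $(y_{1},y_{0})\in\overrightarrow{E}_{h}$, using $M(\bar p)=M(p)^{\mathrm{T}}$, is required because otherwise $\Phi_{p}$ need not send the $J$-type edge to an edge of $\mathcal{G}$; and invoking acyclicity of $\mathcal{G}$ is required to conclude that the injective image $\Phi(p)$ is a shortest path in $\mathcal{G}$.
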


\begin{proof}
By definitions of $\Phi _{p}$ and of horizontal edges, for each edge $%
e_{i}=(x_{i-1},x_{i})$ of $p,$ the pair $(\Phi _{p}(x_{i-1}),\Phi
_{p}(x_{i}))$ is an edge of $\mathcal{G}$ with the same orientation as $%
e_{i} $. Since $\Sigma $ and $\Sigma ^{\prime }$ are disjoint and $p$ is a
shortest horizontal path, hence $\Phi (p)$ is also a shortest path in $%
\mathcal{G}$. In particular, the direction sequences coincide, i.e. $%
\mathfrak{d(}\Phi (p))=\mathfrak{d(}p).$ Note that $L(p)>L_{0}$, by the
definition of $L_{0}$ we have $M(p)=M(\Phi (p))=\boldsymbol{0}$.
\end{proof}

\begin{claim}
\label{clm2}Let $p\ $be a shortest horizontal path in $\mathcal{U}$ with $%
L(p)>L_{0}$. Under the same assumptions as in Theorem \ref{th-lac}, if $p$
falls into Case B, then $M(p)=\boldsymbol{0}.$
\end{claim}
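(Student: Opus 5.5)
Our plan is to show that in Case B the path $p$ contains a long run of edges with a single direction. Corollary \ref{coro-N0-N1} then gives $M(p)=\boldsymbol{0}$, since $K$ is nilpotent of index $I$. Fix $i$ with $1\le i\le \ell-1$ and a maximal block $x_r,\dots,x_s$ with $x_r^-=\dots=x_s^-=y_i$ and $r<s$. The edges of $p$ inside the block are of $G$-type, and the two edges $(x_{r-1},x_r)$ and $(x_s,x_{s+1})$ that enter and leave the block are of $J$-type, because $y_i$ is an interior vertex of $p^-$. By Remark \ref{rmk3} we have $\varphi(x_r),\varphi(x_s)\in S_1\cup S_2$, so the $(S_1,S_2,I-1)$-LAC applies at both letters.

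\textbf{Step 1.} We show that the block is a monotone directed path in $\overrightarrow{G}$. Claim \ref{clm0} says that the $G$-edge $(x_r,x_{r+1})$ has the same direction as the incoming $J$-edge. Suppose first that $\varphi(x_r)\in S_1$. Then $U_{I-1}(\varphi(x_r))$ induces an arborescence rooted at $\varphi(x_r)$. The letters $\varphi(x_r),\dots,\varphi(x_s)$ form a shortest path in $G$, because $p$ is shortest and all these vertices lie in the single copy of $G$ under $y_i$. So as long as the block stays inside the $(I-1)$-neighbourhood, it moves away from the root, and every edge is directed outward with direction $0$. We argue the same way from the other end: the edge $(x_{s-1},x_s)$ and $\varphi(x_s)\in S_1\cup S_2$ force the edges near $x_s$ to have a single direction. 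Now we compare the two ends. A shortest path in a tree that descends from one root cannot ascend into another root's neighbourhood with the opposite orientation unless the two neighbourhoods coincide. This uses acyclicity of $\mathcal{G}$, hence of $G$, so that tree paths are unique. The case $\varphi(x_r)\in S_1\cap S_2$ forces an isolated component; by Remark \ref{rmk3} that component meets $S$ only at one point, which contradicts $r<s$ with both endpoints in $S_1\cup S_2$.

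\textbf{Step 2.} If $s-r\ge I-1$, then together with the entering $J$-edge we have at least $I$ consecutive equal directions, so $S(p)\ge I$ and $M(p)=\boldsymbol{0}$ by Corollary \ref{coro-N0-N1}. If $s-r<I-1$, condition (ii) of the LAC says the neighbourhood has height $I-1$ unless it is an isolated component meeting $S$ only at $\varphi(x_r)$. In the isolated case $\varphi(x_s)$ cannot be in $S$, which is impossible since $\varphi(x_s)\in S_1\cup S_2$. In the height-$(I-1)$ case, the leaving edge $(x_s,x_{s+1})$ is of $J$-type and, by Claim \ref{clm0} applied at $x_s$, has the same direction as the block. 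So the run is at least the entering edge, the $s-r$ block edges and the leaving edge, all in one direction. We then iterate through consecutive Case-B blocks, and through Lemma \ref{le-GPP} (projection is again shortest), to conclude that the direction cannot flip. This gives $N_0-N_1$ growing linearly along $p$. Once $L(p)>L_0$, some subpath has $|N_0-N_1|\ge I$, so a run of $I$ consecutive $K$ (or $K^{\mathrm T}$) factors appears and $M(p)=\boldsymbol{0}$ by nilpotency.

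\textbf{Main obstacle.} The hard part is Step 1's interaction between the arborescence at the entry letter and the anti-arborescence at the exit letter when $\varphi(x_r)\in S_1$ and $\varphi(x_s)\in S_2$. Here the directions really could disagree, and we need acyclicity of $\mathcal{G}$ to exclude it. Our first attempt is this: the letters in $S_1$ and $S_2$ that are joined within $G$ by a shortest path of length less than $2(I-1)$ would have overlapping neighbourhoods. An overlap would force a vertex to be simultaneously in an arborescence and an anti-arborescence with inconsistent roots, and that should contradict the rooted uniqueness of directed paths. Making this rigorous is where we expect most of the work to lie.
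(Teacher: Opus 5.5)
Your case $s-r\ge I-1$ is correct and is essentially the paper's argument. The first $I-1$ block letters form a geodesic leaving the root of $H=G[U_{I-1}(\varphi(x_r))]$, so those edges share one direction. Claim~\ref{clm0} gives the entering $J$-edge the same direction, hence $S(p)\ge I$.

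The gap is the case $s-r<I-1$. There your run (entering edge, block, leaving edge) has length $s-r+2$, which is $<I$ unless $s-r=I-2$. The continuation does not work, for four reasons:
\begin{enumerate}
\item Case B supplies only one interior block. The $J$-edges outside it carry the directions of $p^-$, which may alternate freely, so nothing prevents a flip.
\item A subpath with $|N_0-N_1|\ge I$ need not contain $I$ consecutive equal factors (e.g.\ $001001\cdots$).
\item The $N_0-N_1$ vanishing criterion (Proposition~\ref{prop-Sp}) needs the graded-poset hypothesis, which Theorem~\ref{th-lac} does not assume.
\item $L_0$ plays no role here.
\end{enumerate}
The fix you sketch under ``main obstacle'' also fails. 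Overlapping arborescence and anti-arborescence neighbourhoods are not contradictory: a directed path $a\to\cdots\to b$ of length between $I-1$ and $2I-3$ with $a\in S_1$, $b\in S_2$ satisfies the LAC. So $2(I-1)$ is the wrong threshold, and no contradiction comes from overlap alone.

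The missing idea is that the short case cannot occur, because clause (ii) of the LAC at $\varphi(x_r)$ interacts with the LAC at $\varphi(x_s)$. The paper, after noting that $H$ has height $I-1$, passes straight to the subpath $x_r,\dots,x_{r+I-1}$, so it too relies on $s-r\ge I-1$. Here is how to get it.
\begin{enumerate}
\item Suppose the entering edge has direction $0$ (the other case is symmetric). Then $a=\varphi(x_r)\in S_1$ and $H$ is an arborescence rooted at $a$.
\item Write the block letters as $a=w_0,\dots,w_d=b$ with $d=s-r=d_G(a,b)$, and suppose $d\le I-2$. Then $w_{k-1}\to w_k$ in $\overrightarrow{G}$ for all $k$, and $w_k$ has depth $k$ in $H$.
\item If $b\in S_1$, the arborescence $G[U_{I-1}(b)]$ would contain the edge $w_{d-1}\to b$ into its root. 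So $b\in S_2$, and $G[U_{I-1}(b)]$ is an anti-arborescence rooted at $b$.
\item Every neighbour of every $w_k$ lies within distance $d+1\le I-1$ of both $a$ and $b$, so it lies in both induced subgraphs.
\item In $H$, the neighbours of $w_k$ other than $w_{k-1}$ are out-neighbours. In the anti-arborescence, $w_k$ has out-degree exactly $1$ for $k<d$, and $b$ has out-degree $0$.
\item Hence the component of $a$ in $G$ is exactly the directed path $w_0\to\cdots\to w_d$. So $H$ has height $d<I-1$, yet it meets $S_1\cup S_2$ at both $a$ and $b\neq a$, contradicting clause (ii).
\end{enumerate}
Thus $s-r\ge I-1$ always, and your first case finishes the proof. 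The global monotonicity of the block claimed in Step 1, and the appeal to acyclicity of $\mathcal{G}$, are not needed.
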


\begin{proof}
Without loss of generality we can assume that $(x_{r},x_{r+1},\ldots ,x_{s})$
is the maximal contiguous subpath of $p$ such that all vertices have parent $%
y_{i}.$ Since $1\leq i\leq \ell -1,$ the vertices immediately preceding and
succeeding this block must have parents $y_{i-1}$\ and $y_{i+1}$ (i.e. $%
x_{r-1}^{-}=y_{i-1}$ and $x_{s+1}^{-}=y_{i+1}$)$.$ Consequently, the edges $%
(x_{r-1},x_{r})$ and $(x_{s},x_{s+1})$ are of $J$-type horizontal. It
follows from (\ref{S1S2}) that 
\begin{equation}
\varphi (x_{r}),\varphi (x_{s})\in S_{1}\cup S_{2}.  \label{sss}
\end{equation}%
Write $H:=G[U_{I-1}(\varphi (x_{r}))].$ It follows from (\ref{rsi}) that $%
(x_{j},x_{j+1})\in E_{h}$ is of $G$-type, and hence $(\varphi
(x_{j}),\varphi (x_{j+1}))\in E(G).$ Thus, sequence 
\begin{equation*}
\varphi (x_{r}),\varphi (x_{r+1}),\ldots \varphi (x_{s})
\end{equation*}%
gives a path from $\varphi (x_{r})$ to $\varphi (x_{s})$ on $\overrightarrow{%
G},$ which implies 
\begin{equation}
\varphi (x_{s})\in U_{s-r}(\varphi (x_{r})).  \label{s-r}
\end{equation}%
Since $\overrightarrow{G}$ satisfies the $(S_{1},S_{2},I-1)$-LAC and (\ref%
{sss})--(\ref{s-r}), we obtain that $H$ is an arborescence (or
anti-arborescence) of height exactly $I-1.$ By the construction of $G$-type
horizontal edges, the subpath 
\begin{equation*}
x_{r},x_{r+1},\ldots ,x_{r+I-1}
\end{equation*}%
has length $I-1$ and all its edges have the same direction letter $\sigma $.
From Claim \ref{clm0}, we obtain that the direction of $(x_{r-1},x_{r})$
coincides with $\sigma .$ Consequently, the subpath 
\begin{equation*}
x_{r-1},x_{r},x_{r+1},\ldots ,x_{r+I-1}
\end{equation*}%
has a directed sequence consisting of $I$ identical letter. Hence $S(p)\geq
I,$ and then by Corollary \ref{coro-N0-N1}, we have 
\begin{equation*}
M(p)=\boldsymbol{0}.
\end{equation*}
\end{proof}

For each direction $d\in \{0,1\}$ and $k\in \mathbb{N}\cup \{0\}$, we write $%
[d]_{k}$ for the word of length $k$ in $\{0,1\}^{k}$ whose letters are all
equal to $d$, i.e. 
\begin{equation*}
\lbrack d]_{k}=\underbrace{dd\cdots d}_{k\ \text{times}}.
\end{equation*}

\begin{lemma}
\label{le-kakb}Let $p$ be a shortest horizontal path in $\mathcal{U}$ with $%
L(p)>L_{0}$. Suppose the assumptions of Theorem\ \ref{th-lac} hold and $%
M(p)\neq \mathbf{0}$. Then there exist a shortest horizontal path $p^{\prime
}$ in $\mathcal{U}$, letters $a,b\in \{0,1\}$ and integers $k_{a},k_{b}\in 
\mathbb{N}\cup \{0\}$ such that 
\begin{equation*}
\mathfrak{d(}p)=[a]_{k_{a}}\mathfrak{d(}p^{\prime })[b]_{k_{b}},
\end{equation*}%
where the directed sequence $\mathfrak{d(}p^{\prime })$ starts with $a$ and
ends with $b$, $L(p^{\prime })\leq L_{0}$ and $M(p^{\prime })\neq 
\boldsymbol{0}$.
\end{lemma}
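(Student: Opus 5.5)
By Claims \ref{clm1} and \ref{clm2}, a shortest horizontal path $p$ with $L(p)>L_{0}$ and $M(p)\neq\boldsymbol{0}$ cannot fall into Case A or Case B. It therefore falls into Case C, and we use that case to read off the decomposition. Write $p=(x_{0},\dots,x_{t})$ and $p^{-}=(y_{0},\dots,y_{\ell})$ with $\ell\geq 2$. By Lemma \ref{le-GPP}, $p^{-}$ is a shortest horizontal path. Condition (\ref{never}) says that every interior parent $y_{i}$ with $1\leq i\leq \ell-1$ is visited by exactly one vertex of $p$. Hence $p$ splits into three pieces:
\begin{itemize}
\item an initial block $x_{0},\dots,x_{r}$ of vertices with parent $y_{0}$, joined by $G$-type edges;
\item a middle part $x_{r},x_{r+1},\dots,x_{s}$ in which every edge is of $J$-type, so that $x_{r+m}^{-}=y_{m}$;
\item a final block $x_{s},\dots,x_{t}$ with parent $y_{\ell}$, again joined by $G$-type edges.
\end{itemize}

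For $p'$ we take the projection of the middle part, namely $p':=p^{-}=(y_{0},\dots,y_{\ell})$, or equivalently the middle subpath itself lifted one level up. Since every edge of the middle part is of $J$-type, the edge $(x_{r+m-1},x_{r+m})$ has the same direction as $(y_{m-1},y_{m})$. This gives $\mathfrak{d}(x_{r},\dots,x_{s})=\mathfrak{d}(p^{-})$. The path $p'$ is shortest by the GPP.

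Next we show that the two end blocks are monotone and that their directions agree with the adjacent letter of $\mathfrak{d}(p')$. Consider the block $x_{s},\dots,x_{t}$. The edge entering $x_{s}$ is of $J$-type, so Claim \ref{clm0} gives that $(x_{s},x_{s+1})$ has the same direction $b$ as $(x_{s-1},x_{s})$. Arguing as in Claim \ref{clm2}, $\varphi(x_{s})\in S_{1}\cup S_{2}$, and the LAC makes $G[U_{I-1}(\varphi(x_{s}))]$ an arborescence or anti-arborescence rooted at $\varphi(x_{s})$. A shortest path in $G$ starting from the root of such a tree moves monotonically away from the root, so every edge has direction $b$, as long as the block has length at most $I-1$. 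If the block had length $\geq I-1$, then together with the $J$-edge it would produce $I$ equal consecutive letters. Then $S(p)\geq I$, and Corollary \ref{coro-N0-N1} would give $M(p)=\boldsymbol{0}$, a contradiction. Hence the block lies inside the $(I-1)$-neighbourhood and $\mathfrak{d}=[b]_{k_{b}}$ with $k_{b}=t-s$. The initial block is treated symmetrically, reading the path backwards. Here Claim \ref{clm0} is applied to the pair formed by $(x_{r-1},x_{r})$ and $(x_{r},x_{r+1})$ in reversed orientation, and it yields $[a]_{k_{a}}$, where $a$ is the first letter of $\mathfrak{d}(p')$.

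Finally, $M(p)=K_{a}^{k_{a}}M(p')K_{b}^{k_{b}}\neq\boldsymbol{0}$ forces $M(p')\neq\boldsymbol{0}$. It remains to show $L(p')\leq L_{0}$.

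This last bound is the step I expect to be the main obstacle. The definition of $L_{0}$ refers to shortest paths in $\mathcal{G}$, whereas $p'$ lives one level higher in $\mathcal{U}$. My plan is to iterate the whole decomposition. If $L(p')>L_{0}$, apply the same argument to $p'$, which is shortest, has $M(p')\neq\boldsymbol{0}$, and is again in Case C. This peels off further monotone end blocks, which are merged into $k_{a}$ and $k_{b}$ because they carry the same letters $a$ and $b$. Each step strictly lowers the level, and Case C requires $\ell\geq 2$ at every stage. The descent must therefore terminate, either at a path of length $\leq L_{0}$ or at a path in Case A or B. The latter is impossible by Claims \ref{clm1} and \ref{clm2} when the length exceeds $L_{0}$. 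Termination then yields $L(p')\leq L_{0}$.
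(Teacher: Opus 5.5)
Your proposal is correct and follows essentially the paper's argument. You rule out Cases A and B via Claims \ref{clm1} and \ref{clm2}, split a Case C path into two monotone end blocks (via Claim \ref{clm0} and the LAC, using $S(p)\geq I$ to exclude long blocks) around a $J$-type middle whose direction sequence is $\mathfrak{d}(p^{-})$, and then use the GPP to descend level by level until the length is at most $L_{0}$. Your iterated descent, which stops at level $1$ by Claim \ref{clm1}, is the paper's induction on the level written as a loop, and your check that the peeled-off blocks carry the same letters $a,b$ is a detail the paper leaves implicit.
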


\begin{proof}
Claims \ref{clm1} and \ref{clm2} implies that $M(p)=\boldsymbol{0}$ whenever 
$p$ falls into Case A or Case B. Hence it suffices to consider Case C.\
Consequently, internal stay only occur within the $J_{1}(y_{0})$ and $%
J_{1}(y_{\ell }).$ Let $u_{0}$ denote the last child of $y_{0}$ visited by $%
p $ and $u_{\ell }$ denoe the first child of $y_{\ell }$ visited by $p$,
i.e.,%
\begin{equation*}
u_{0}=x_{i_{0}},\text{ }i_{0}=\max \{j:x_{j}^{-}=y_{0}\},\text{ }
\end{equation*}%
\begin{equation*}
u_{\ell }=x_{i_{\ell }},\text{ }i_{\ell }=\min \{j:x_{j}^{-}=y_{\ell }\}.
\end{equation*}%
Let $H_{0}=G[U_{I-1}(\varphi (u_{0}))],$ $H_{\ell }=G[U_{I-1}(\varphi
(u_{\ell }))],$ and set 
\begin{equation*}
a=\mathfrak{d(}y_{0},y_{1})=\mathfrak{d(}x_{i_{0}},x_{i_{0}+1}),
\end{equation*}%
\begin{equation*}
b=\mathfrak{d(}y_{\ell -1},y_{\ell })=\mathfrak{d(}x_{i_{\ell
}-1},x_{i_{\ell }}).
\end{equation*}

Note that $\varphi (u_{0}),\varphi (v_{\ell })\in S_{1}\cup S_{2}$, hence
the induced subgraphs $H_{0}$ and $H_{\ell }$ are (anti-)arborescences
rooted at $\varphi (u_{0})$ and $\varphi (v_{\ell })$, respectively. In
particular, 
\begin{equation*}
i_{0}<h(H_{0}),\ t-i_{\ell }<h(H_{\ell }).
\end{equation*}%
(Otherwise, if $i_{0}\geq h(H_{0})\ $or$\ t-i_{\ell }\geq h(H_{\ell }),$
then by Claim \ref{clm0} we have $S(p)\geq I.$ This implies $M(p)=0,$
contradicting the assumption that $M(p)\neq 0.$) Then we obtain that%
\begin{equation}
\mathfrak{d(}x_{0},x_{1},\cdots ,x_{i_{0}})=\mathfrak{d(}\varphi
(x_{0}),\varphi (x_{1}),\cdots ,\varphi (x_{i_{0}}))=[a]_{i_{0}}  \label{aaa}
\end{equation}%
and 
\begin{equation}
\mathfrak{d}(x_{i_{\ell }},x_{i_{\ell }+1},\cdots ,x_{t})=\mathfrak{d(}%
\varphi (x_{i_{\ell }}),\varphi (x_{i_{\ell }+1}),\cdots ,\varphi
(x_{t}))=[b]_{t-i_{\ell }}.  \label{bbb}
\end{equation}%
Using (\ref{never}) and (\ref{aaa})--(\ref{bbb}) we have 
\begin{equation}
\mathfrak{d(}p)=\mathfrak{d(}x_{0},x_{1},\cdots ,x_{i_{0}})\mathfrak{d(}%
p^{-})\mathfrak{d(}x_{i_{\ell }},x_{i_{\ell }+1},\cdots ,x_{t})=[a]_{i_{0}}%
\mathfrak{d(}p^{-})[b]_{t-i_{\ell }}  \label{decomp}
\end{equation}%
and $M(p^{-})\neq 0$ whenever $M(p)\neq 0$.

Since $\mathcal{G}$ is acyclic, by Lemma \ref{le-GPP} we obtain that $%
\mathcal{U}$ satisfies GPP. Hence $p^{-}$ is also a shortest horizontal
path. We now proceed by induction on $n$.

For the base case $n=2$, note that $p^{-}$ lies at level $1$, i.e. $p^{-}$
lies in $G_{1}\simeq G.$ \newline
If $i_{0}+(t-i_{\ell })<L(p)-L_{0}$, then 
\begin{equation*}
L(p^{-})=L(p)-i_{0}-(t-i_{\ell })>L_{0},
\end{equation*}%
so by the definition of $L_{0}$ we have $M(p^{-})=\boldsymbol{0},$ and hence 
$M(p)=\boldsymbol{0},$ which contradicts $M(p)\neq 0$. \newline
If $i_{0}+(t-i_{\ell })\geq L(p)-L_{0}$, then $L(p^{-})\leq L_{0}$. By (\ref%
{decomp}), we have 
\begin{equation*}
\mathfrak{d(}p)=[a]_{i_{0}}\mathfrak{d(}p^{-})[b]_{t-i_{\ell }}.
\end{equation*}

Assume the statement holds for some $n\geq 2$. Now we consider the shortest
horizontal path $p$ lying at level $n+1$ with $L(p)>L_{0}$ and $M(p)\neq 0$.
If $L(p^{-})\leq L_{0}$, then by (\ref{decomp}), the projected walk $p^{-}$
already provides the desired $p^{\prime }$.

Otherwise $L(p^{-})>L_{0}$. Since $p^{-}$ is a shortest horizontal path
lying at level $n$ and $M(p^{-})\neq 0$, the induction hypothesis yields a
horizontal path $p^{\prime }$ with $L(p^{\prime })\leq L_{0}$ and $%
M(p^{\prime })\neq 0$ such that 
\begin{equation*}
\mathfrak{d(}p^{-})=[a]_{k_{a}}\mathfrak{d(}p^{\prime })[b]_{k_{b}},
\end{equation*}%
where $\mathfrak{d(}p^{\prime })$ starts with $a$ and ends with $b$, and $%
k_{a},k_{b}\in \mathbb{N}\cup \{0\}$. Substituting this into (\ref{decomp})
gives 
\begin{equation*}
\mathfrak{d(}p)=[a]_{i_{0}}\mathfrak{d(}p^{-})[b]_{t-i_{\ell
}}=[a]_{i_{0}}[a]_{k_{a}}\mathfrak{d(}p^{\prime })[b]_{k_{b}}[b]_{t-i_{\ell
}}=[a]_{i_{0}+k_{a}}\mathfrak{d(}p^{\prime })[b]_{t-i_{\ell }+k_{b}},
\end{equation*}%
which is of the required form. This completes the inductive step.
\end{proof}

We are now ready to prove another main result of this subsection.

\begin{proof}[Proof of Theorem \protect\ref{th-lac}]
Let 
\begin{equation*}
L^{\ast }=L_{0}+2(I-2)+1.
\end{equation*}%
By Theorem \ref{th-equ}, it suffices to show that for every shortest
horizontal path $p$ in $\mathcal{U}$ with $L(p)\geq L^{\ast },$ we have $%
M(p)=\boldsymbol{0}$.

Let $p$ be a shortest horizontal path with $L(p)\geq L^{\ast }$ at level $n$%
. Then $p^{-}$ is a shortest horizontal path by Lemma \ref{le-GPP} and the
assumption that $\mathcal{G}$ is acyclic.

Assume, for contradiction, that $M(p)\neq \boldsymbol{0}$. According to the
Claims \ref{clm1} and \ref{clm2}, we may assume that $p$ falls into Case C.
Since $L(p)>L_{0}$ and $M(p)\neq \mathbf{0}$, Lemma~\ref{le-kakb} applies
and yields a horizontal path $p^{\prime }$ and integers $k_{a},k_{b}\in 
\mathbb{N}\cup \{0\}$ such that $L(p^{\prime })\leq L_{0},\ M(p^{\prime
})\neq 0,$ and 
\begin{equation}
\mathfrak{d(}p)=[a]_{k_{a}}\mathfrak{d(}p^{\prime })[b]_{k_{b}},  \label{d-p}
\end{equation}%
where $\mathfrak{d(}p^{\prime })$ starts with $a$ and ends with $b$.

Since $L(p)=k_{a}+L(p^{\prime })+k_{b}\geq L^{\ast }$ and $L(p^{\prime
})\leq L_{0}$, we have 
\begin{equation*}
k_{a}+k_{b}=L(p)-L(p^{\prime })\geq L^{\ast }-L_{0}=2(I-2)+1.
\end{equation*}%
Hence $\max \{k_{a},k_{b}\}\geq I-1$. If $k_{a}\geq I-1,$ then in (\ref{d-p}%
) the initial block $[a]_{k_{a}}$ followed by the first letter of $\mathfrak{%
d(}p^{\prime })$ (which is also $a$) yields a contiguous block of $a$'s of
length at least $k_{a}+1\geq I$ in $\mathfrak{d(}p)$. The case $k_{b}\geq
I-1 $ is analogous. Therefore the direction sequence $\mathfrak{d(}p)$
contains a contiguous block of length at least $I$, i.e. $S(p)\geq I.$ By
Corollary \ref{coro-N0-N1} this implies $M(p)=\boldsymbol{0},$ contradicting
our assumption $M(p)\neq \boldsymbol{0}$.
\end{proof}

\bigskip

\section{Applications and Examples}

In this section we apply the hyperbolicity criteria established in Section 4
to several basic classes of initial graphs $G$, including paths, cycles, and
complete graphs. These applications not only illustrate the scope of the
general results but also allow us to decide the hyperbolicity of the
substitution graphs appearing in Examples \ref{exa1}--\ref{exa4}.

Before discussing the hyperbolicity of the substitution graph in Example \ref%
{exa1}, we first present a more general result.

\begin{theorem}
Suppose $\overrightarrow{G}$ is a directed path from $1$ to $N$ and$\ 
\overrightarrow{E}(J)=\{(i,j^{\prime })\}.$ Then $\mathcal{U}$ is hyperbolic
if and only if 
\begin{equation}
i\neq j\text{ and either }i=N\text{ or }j=1.  \label{i=N,j=1}
\end{equation}
\end{theorem}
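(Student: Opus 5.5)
The plan is to split according to whether condition (\ref{i=N,j=1}) holds, and to get each direction from results already established. Sufficiency will come from Theorem~\ref{th-degree}. Non-hyperbolicity will come from Corollary~\ref{coro-nonhyper} when $i\neq N$ and $j\neq 1$, and from Proposition~\ref{prop-nil} in the remaining cases with $i=j$.

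\emph{Sufficiency.} Assume $i\neq j$ and either $i=N$ or $j=1$. The matrix $K$ has a single nonzero entry $K_{i,j}=1$ with $i\neq j$. Hence $K^2=\boldsymbol{0}$, so $K$ is nilpotent with index $I=2$, and hypothesis (i) of Theorem~\ref{th-degree} holds. For hypothesis (ii) we check degrees in the coupling graph $\overrightarrow{\mathcal{G}}$, which consists of the path $1\to\cdots\to N$, the path $1'\to\cdots\to N'$, and the edge $(i,j')$.
\begin{itemize}
\item If $i=N$: every $k<N$ has the single out-edge $k\to k+1$, the vertex $N$ has only the out-edge to $j'$, and every primed vertex has out-degree at most $1$. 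So all out-degrees are at most $1$.
\item If $j=1$: every $k'$ with $k>1$ has only the in-edge from $(k-1)'$, the vertex $1'$ has only the in-edge from $i$, and every unprimed vertex has in-degree at most $1$. So all in-degrees are at most $1$.
\end{itemize}
In either case Theorem~\ref{th-degree} gives hyperbolicity.

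\emph{Necessity.} Assume (\ref{i=N,j=1}) fails. The underlying graph $\mathcal{G}$ is two paths joined by one edge, so it is a tree; in particular it is bipartite, and $G$ is connected.
\begin{itemize}
\item If $i\neq N$ and $j\neq 1$, then $\deg^{+}_{\overrightarrow{G}}(i)\ge 1$ and $\deg^{-}_{\overrightarrow{G}}(j)\ge 1$. Corollary~\ref{coro-nonhyper} then shows that $\mathcal{U}$ is not hyperbolic. This case covers $i=j\notin\{1,N\}$ as well.
\item The only remaining cases are $i=j=N$ and $i=j=1$. Here $K$ has a diagonal $1$, so $K$ is not nilpotent. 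By Proposition~\ref{prop-nil} it suffices to show that $D_n\to\infty$.
\end{itemize}

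\emph{Showing $D_n\to\infty$ in the diagonal cases.} This is the step needing a small new argument. The plan has three parts.
\begin{enumerate}
\item Each $G_n$ is connected, by induction on $n$. The children of any fixed word are joined by a copy of the path $G$. A horizontal edge $(u,v)$ of $G_{n-1}$ produces the $J$-type edge $(ui,vj)$, which links the child-blocks of $u$ and $v$.
\item $G_n$ is acyclic, by the first part of the proof of Lemma~\ref{le-GPP}, since $\mathcal{G}$ is acyclic. Hence $G_n$ is a tree on $N^n$ vertices.
\item The horizontal degrees are uniformly bounded, by some $\Delta$ with $\Delta\le 3$. A vertex has at most two $G$-type neighbours. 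It also has at most one $J$-type neighbour, which one checks by induction using that $J$ has a single edge.
\end{enumerate}
A tree with maximum degree at most $\Delta$ and $N^n$ vertices has diameter at least of order $\log_{\Delta}(N^n)$. Therefore $D_n\to\infty$, and Proposition~\ref{prop-nil} rules out hyperbolicity.

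The main point to handle carefully is the degree bound in part 3, together with the connectedness induction in part 1.
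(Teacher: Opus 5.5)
Your sufficiency argument (checking $K^2=\boldsymbol{0}$ and the degree bounds in $\overrightarrow{\mathcal{G}}$, then applying Theorem~\ref{th-degree}) is correct. Your use of Corollary~\ref{coro-nonhyper} when $i\neq N$, $j\neq 1$ is also correct, as is the reduction of necessity to $i=j\in\{1,N\}$. Your direct degree check is in fact cleaner than the paper's appeal to Corollary~\ref{cor-arbo}: for $i=N$ the coupling graph is an anti-arborescence, not an arborescence.

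\textbf{The gap is in part 3 of the diagonal case.} The claim that every vertex has at most one $J$-type neighbour is false. When $E(\overrightarrow{J})=\{(i,i')\}$, the $J$-type neighbours of $wi$ are exactly the words $vi$ with $v$ a horizontal neighbour of $w$ in $G_{n-1}$, in either direction. Hence
\[
\deg_h(wi)=\deg_G(i)+\deg_h(w),
\]
and by induction $\deg_h(i^n)=n\deg_G(i)$. For example, with $N=2$ and $i=j=2$, the vertex $2222$ is adjacent to $2221,2212,2122,1222$. So there is no uniform bound $\Delta$. Without one, the step ``tree on $N^n$ vertices, hence diameter of order $\log_\Delta N^n$'' fails, since a tree of unbounded degree can have diameter $2$. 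This is exactly the step you flagged as delicate.

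The conclusion $D_n\to\infty$ is still true, and you can recover it in either of two ways.

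\emph{The paper's route.} Pick $l$ adjacent to $i$ in $G$ and prove $d_h(l^k,i^k)\ge k$ by induction on $k$. Every $J$-type edge joins two words ending in $i$, and $l\neq i$. So the first edge of a shortest horizontal path from $l^{k+1}$ to $i^{k+1}$ is $G$-type and contributes nothing to the projected walk. That walk joins $l^k$ to $i^k$, which gives $d_h(l^{k+1},i^{k+1})\ge 1+d_h(l^k,i^k)$.

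\emph{Repairing your counting.} Use the correct recursion $\Delta_n\le\Delta_{n-1}+2$, so $\Delta_n\le 2n$. Since $G_n$ is connected (your part 1), a Moore-type count gives $N^n\le(2n)^{D_n+1}$. Hence $D_n\ge n\log N/\log(2n)-1\to\infty$. With this repair, the acyclicity from part 2 is not needed.
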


\begin{proof}
If $i\neq j,$ and $i=N$ or $j=1,$ then $\mathcal{G}$ is an arborescence, and
hence by Corollary \ref{cor-arbo} we obtain that $\mathcal{U}$ is hyperbolic.

Conversely, if $i\neq N$ and $j\neq 1,$ then $\deg _{\overrightarrow{G}%
}^{+}(i)\geq 1$ and $\deg _{\overrightarrow{G}}^{-}(j)\geq 1.$ Note that $G$
is connected and $\mathcal{G}$ is bipartite, it follows from Corollary \ref%
{coro-nonhyper} that $\mathcal{U}$ is not hyperbolic.

If $i=j,$ we claim that $\{D_{n}\}_{n\in \mathbb{N}}$ is unbounded, which
together with the non-nilpotency of $K$ and Proposition \ref{prop-nil}
implies that $\mathcal{U}$ is not hyperbolic.

Indeed, choose $l\in \{i-1,i+1\}\cap \Sigma $ (note that such an index $l$
exists as $N\geq 2$), then $d_{G}(l,i)=1$. For each $k\geq 1$ set 
\begin{equation*}
x_{k}:=l^{k},\ y_{k}:=i^{k}\in \Sigma _{k}.
\end{equation*}%
We prove by induction on $k$ that%
\begin{equation}
d_{h}(x_{k},y_{k})\geq k\quad \text{for all }k\geq 1.  \label{dist-k}
\end{equation}%
Since $d_{h}(x_{1},y_{1})=d_{G}(l,i)=1$, (\ref{dist-k}) holds for $k=1$.
Assume that (\ref{dist-k}) holds for some $k\geq 1$. Let $p\ $be a shortest
horizontal path from $x_{k+1}$ to $y_{k+1}$ with length $m$. Let $\ell
=L(p^{-}),$ then 
\begin{equation}
\ell \geq d_{h}(x_{k},y_{k}).  \label{x_k}
\end{equation}%
Because $E(\vec{J})=\{(i,i^{\prime })\}$, any $J$-type horizontal edge must
join two vertices whose last letters are both $i$. In particular, starting
from $x_{k+1}=l^{k+1}$, the first edge of $p$ must be a $G$-type edge that
changing the last letter from $l$ to $i$. Therefore this first edge does not
contribute to $p^{-}$, while each of the remaining $\ell -1$ edges
contributes at most one step in $p^{-}$. Thus 
\begin{equation}
\ell \leq m-1=d_{h}(x_{k+1},y_{k+1})-1.  \label{y_k}
\end{equation}%
Combining (\ref{x_k})--(\ref{y_k}) yields 
\begin{equation*}
d_{h}(x_{k+1},y_{k+1})\geq \ell +1\geq d_{h}(x_{k},y_{k})+1\geq k+1,
\end{equation*}%
hence (\ref{dist-k}) holds for $k+1$. By induction, (\ref{dist-k}) holds for
all $k\geq 1$. Since $x_{k},y_{k}\in \Sigma _{k}$, we have 
\begin{equation*}
D_{k}\geq d_{h}(x_{k},y_{k})\geq k\quad \text{for all }k\geq 1,
\end{equation*}%
and hence the sequence $(D_{n})_{n\geq 1}$ is unbounded.
\end{proof}

Example \ref{exa1} yields a hyperbolic substitution graph, as $%
\overrightarrow{G}$ is a directed path and it is easily verified that it
satisfies condition (\ref{i=N,j=1}).

Example \ref{exa2} satisfies the LAC condition, and $\mathcal{G}$ is
acyclic; therefore, Theorem \ref{th-lac} implies that its substitution graph
is hyperbolic.

\begin{proposition}
\label{prop-Cartesian}Let $G=K_{N}$ be the complete graph on $N$ vertices,
and let $E(\overrightarrow{J})=\{(i,i^{\prime }):i\in \Sigma \}.$ Then the
level-$n$ horizontal graph $G_{n}$ of the substitution graph $\mathcal{U}$\
is isomorphic to the $n$-fold Cartesian product of $G$, i.e. 
\begin{equation*}
G_{n}\simeq G^{n}=K_{N}^{n}\text{ for all }n\geq 1.
\end{equation*}%
Consequently, $\mathcal{U}$ is not hyperbolic.
\end{proposition}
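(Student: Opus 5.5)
We prove the isomorphism by induction on $n$, and then obtain non-hyperbolicity from Proposition \ref{prop-nil}.

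\textbf{The isomorphism.} We identify $\Sigma_n$ with $\Sigma^n$, the vertex set of $K_N^n$. In the Cartesian product, $(a_1\cdots a_n)$ and $(b_1\cdots b_n)$ are adjacent iff they differ in exactly one coordinate; here two distinct letters are always adjacent in $K_N$. We claim that the underlying horizontal graph $G_n$ has exactly these edges, and argue by induction on $n$.

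For $n=1$ we have $G_1\simeq G=K_N$. For the step, let $u=xa$ and $v=yb$ be distinct vertices in $\Sigma_n$ with $x,y\in\Sigma_{n-1}$ and $a,b\in\Sigma$.
\begin{enumerate}
\item[(i)] If $x=y$, the pair is an edge of $G_n$ iff it is a $G$-type edge. This holds iff $\{a,b\}\in E(K_N)$, which is automatic since $a\neq b$. These are exactly the product edges differing only in the last coordinate.
\item[(ii)] If $x\neq y$, the pair is an edge iff it is a $J$-type edge in some direction. This requires $\{x,y\}\in E(G_{n-1})$ and $(a,b')\in E(\overrightarrow{J})$ or $(b,a')\in E(\overrightarrow{J})$. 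Since $\overrightarrow{J}$ consists only of edges $(i,i')$, this means $a=b$ together with $\{x,y\}\in E(G_{n-1})$. By the induction hypothesis, these are the product edges with equal last coordinate and prefixes differing in exactly one coordinate.
\end{enumerate}
Together, (i) and (ii) give exactly the edge set of $K_N^{n-1}\square K_N=K_N^n$. The only care needed is that the underlying graph forgets orientation. $G$ is a directed simple graph whose underlying graph is $K_N$, so at least one orientation of each pair appears. The identity $J$ is symmetric under reversing direction.

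\textbf{Non-hyperbolicity.} Here $K=I_N$, which is not nilpotent. The horizontal distance in $K_N^n$ is the Hamming distance, so $d_h(1^n,2^n)=n$ and $D_n\geq n$. Hence $\{D_n\}$ is unbounded. Since $K$ is also not nilpotent, Proposition \ref{prop-nil} shows that $\mathcal{U}$ is not hyperbolic.

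Alternatively, any shortest horizontal path $p$ has $M(p)=I_N\neq\boldsymbol{0}$, and Theorem \ref{th-equ} applies directly.

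The main point requiring care is step (ii): one must check that a $J$-type edge needs its parents adjacent in $G_{n-1}$, not merely equal. One must also check that the paper's exclusion of $u=v$ causes no issue for loops, since $J$ pairs $i$ with its copy $i'$ rather than with $i$ itself. Everything else is bookkeeping.
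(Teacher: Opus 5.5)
Your proposal is correct and follows the paper's proof essentially verbatim: the same induction on $n$, with the same split into $G$-type edges (equal parents, distinct last letters, since $G=K_N$) and $J$-type edges (adjacent parents, equal last letters, since $K=I_N$); your identification of $\Sigma_n$ with $\Sigma^n$ is just the unrolled form of the paper's recursive map $\phi_{n+1}(v)=(\phi_n(v^-),\varphi(v))$. The non-hyperbolicity step is likewise the paper's appeal to Proposition~\ref{prop-nil}, and your Hamming-distance computation $d_h(1^n,2^n)=n$ makes explicit the unboundedness of $D_n$ that the paper only asserts.
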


\begin{proof}
We proceed by induction on $n$. For $n=1$, the level-$1$ horizontal graph $%
G_{1}$ coincides with $G=K_{N}$, and the identity mapping establishes an
isomorphism $G_{1}\simeq K_{N}$. Suppose that for some $n\geq 1$ there
exists an isomorphism $\phi _{n}:G_{n}\rightarrow K_{N}^{n}.$ We define a
mapping $\phi _{n+1}:G_{n+1}\rightarrow K_{N}^{n+1}$ by 
\begin{equation*}
\phi _{n+1}(v)=(\phi _{n}(v^{-}),\varphi (v)),\text{ }v\in \Sigma _{n}.
\end{equation*}%
By construction, two vertices $u,v\in \Sigma _{n}$ are adjacent if and only
if either $u^{-}=v^{-}$ and $(\varphi (u),\varphi (v))\in E(G)$ or $%
(u^{-},v^{-})\in E(G_{n})$ and $\varphi (u)=\varphi (v),$ which coincides
exactly with the adjacency relation in $K_{N}^{n}\times K_{N}=K_{N}^{n+1}.$
Hence, $\phi _{n+1}$ is an isomorphism.

Since $K_{N}^{n}$ have unbounded diameter as $n\rightarrow \infty $ and $K=K(%
\overrightarrow{J})$ is identity matrix (hence not nilpotent), it follows
from Proposition \ref{prop-nil} that $\mathcal{U}$ is not hyperbolic.
\end{proof}

As an immediate application of Proposition \ref{prop-Cartesian}, we obtain
that the substitution graph in Example \ref{exa3} is not hyperbolic.

As a direct consequence of Corollary \ref{coro-cycle}, the substitution
graph arising in Example \ref{exa4} is not hyperbolic.

\bigskip

\section{Acknowledgments}

This work was partially supported by the National Natural Science Foundation
of China (Nos. 62450131, 12301107 and 12271038) and the Shandong Provincial
Natural Science Foundation, China (No. ZR202209010046).

\end{document}